\newtheorem{thm}{Theorem}[section]
\newtheorem{cor}[thm]{Corollary}
\newtheorem{prop}[thm]{Proposition}
\newtheorem{lem}[thm]{Lemma}
\theoremstyle{definition}
\newtheorem{defn}[thm]{Definition}
\theoremstyle{remark}
\newtheorem{rem}[thm]{Remark}
\theoremstyle{definition}
\theoremstyle{definition}
\newtheorem{ex}[thm]{Example}
\theoremstyle{definition}
\numberwithin{equation}{section} 
\title{Counting saddle connections in flat surfaces with poles of higher order}
\author{Guillaume Tahar} 
\address[Guillaume Tahar]{Institut de Math{\'e}matiques de Jussieu - UMR CNRS 7586}
\email{guillaume.tahar@imj-prg.fr}
\date{December 6, 2017}
\keywords{Translation surface, Saddle connection, Flat structure, Quadratic differential, Higher order differential}
\begin{document}
\begin{abstract}
Flat surfaces that correspond to $k$-differentials on compact Riemann surfaces are of finite area provided there is no pole of order $k$ or higher. We denote by \textit{flat surfaces with poles of higher order} those surfaces with flat structures defined by a $k$-differential with at least one pole of order at least $k$. Flat surfaces with poles of higher order have different geometrical and dynamical properties than usual flat surfaces of finite area. In particular, they can have a finite number of saddle connections. We give lower and upper bounds for the number of saddle connections and related quantities. In the case $k=1\ or\ 2$, we provide a combinatorial characterization of the strata for which there can be an infinite number of saddle connections.\newline
\end{abstract}
\maketitle
\setcounter{tocdepth}{1}
\tableofcontents

\section{Introduction}

Translation surfaces correspond to nonzero holomorphic 1-forms (Abelian differentials) on a compact Riemann surface. In \cite{Bo}, a \textit{translation surface with poles} is defined by a meromorphic $1$-form on a compact Riemann Surface, where the poles correspond to the punctured points. In this paper, we also consider $k$-differentials, that are differentials of the form $f(z)dz^{k}$. Nonzero $k$-differentials define flat metrics as well and their holonomy can be nontrivial (rotations of angle $\dfrac{2m\pi}{k}$).\newline
Zeroes of arbitrary order and poles of order at most $k-1$ of a meromorphic $k$-differential are the conical singularities of the flat metric induced. Poles of order at least $k$ are referred to as \textit{poles of higher order}. Every neighborhood of a pole of higher order is of infinite area for the induced flat structure. In this paper, we mainly focus on saddle connections of flat surfaces with poles of higher order. A saddle connection is a geodesic segment joining two conical singularities and having no singularity inside it.\newline
Although we are mainly interested in $1$-forms ($k=1$) and quadratic differentials ($k=2$) many of our constructions holds for arbitrary $k$-differentials.\newline
Translation surfaces (corresponding to holomorphic $1$-forms) and half-translation surfaces (corresponding to meromorphic quadratic differentials with at most simple poles) have been deeply studied, both from a geometrical and a dynamical point of view, see \cite{Zo}. They are flat surfaces of finite area.\newline
Flat surfaces that correspond to meromorphic $k$-differentials have finite genus and their interest is much more geometric than dynamic. They have been connected with at least three different fields of study:\newline
- wall-crossing structures in spaces of stability conditions on triangulated categories, see \cite{BS,GMN,KS}.\newline
- spectral families of linear differential operators, see \cite{Sh}.\newline
- compactifications of strata of surfaces of finite area, see \cite{Au, BCGGM, EKZ, FP, Ge, MZ}, and invariant submanifolds of these strata, see \cite{EMM,F}.\newline

In this paper, we initiate the study of the counting problem in the case of strata of $k$-differentials with poles of higher order. Our objective is twofold. First, we try to solve the problem as completely as possible for meromorphic $1$-forms and meromorphic quadratic differentials. As it turns out, many of our intermediate results equally hold for differentials of higher order and may be used as foundational results for a further study.\newline
\newline

\section{Statement of main results}

Let $X$ be a compact Riemann Surface. $K$ is the canonical line bundle on $X$. A meromorphic $k$-differential $\phi$ is a meromorphic section of $K^{\otimes k}$. When $\phi$ has at least one pole of order at least $k$, then $(X,\phi)$ is called a \textit{flat surface with poles of higher order}. When there is no ambiguity, it will be referred to as $X$. Flat surfaces with poles of higher order are infinite area surfaces with "finite complexity" \cite{Bo1, HKK}.\newline
In the following, without further specification, we study differentials with at least one pole of order at least $k$. If $k=1$, we get translation surfaces with poles studied in \cite{Bo}.\newline

Zeroes and poles of order at most $k-1$ of the $k$-differential are conical singularities for the induced flat metric \cite{BCGGM1, St}. We denote by $\mathcal{H}^{k}(a_1,\dots,a_n,-b_1,\dots,-b_p)$ the stratum in the moduli space of meromorphic $k$-differentials that correspond to differentials with conical singularites of degrees $a_1,\dots,a_n \in \lbrace-k+1,\dots,-1\rbrace \cup \mathbb{N}^{\ast}$ and poles of degrees $b_1,\dots,b_p \geq k$. Family $a_1,\dots,a_n,-b_1,\dots,-b_p$ is the \textit{singularity pattern} of the stratum. The integer $g$ such that $\sum \limits_{i=1}^n a_i - \sum \limits_{j=1}^p b_j = k(2g-2)$ is the \textit{genus} of the flat surface with poles of higher order.\newline

Unless otherwise indicated, we will assume that $n>0$ and $p>0$. Besides, if $g=0$, we will assume that $n+p \geq 3$ because there are no saddle connections in surfaces with too few singularities. In order to simplify our study, there are no marked points in the surfaces we study.\newline

\subsection{Reducibility index and graph representations} 

A first investigation about dynamics of flat surfaces of infinite area shows that an infinite number of saddle connections is equivalent to existence of a cylinder of finite area in the flat surface. These cylinders are spanned by closed geodesics (we always assume cylinders are nondegenerate and have positive area). In the following, we provide a combinatorial characterization of strata whose flat surfaces may contain such cylinders. Since closed geodesics may intersect themselves when $k\geq3$, we focus on the case $k=1\ or\ 2$.\newline

On that assumption, cutting along a family of closed geodesics decomposes a surface into especially nice components connected by cylinders according to a graph. Such a decomposition defines what we denote by a \textit{graph representation}, that is a graph where vertices correspond to connected components and edges correspond to cylinders. Every singularity of the singularity pattern $a_1,\dots,a_n,-b_1,\dots,-b_p$ of a stratum $\mathcal{H}^{k}(a_1,\dots,a_n,-b_1,\dots,-b_p)$ is assigned to a vertex, see Figure 1. In the following, we define abstractly what a graph representation is. Lemma 6.1 ensures that what we get by cutting along a family of closed geodesics is indeed a graph representation.\newline

\begin{figure}
\includegraphics[scale=0.3]{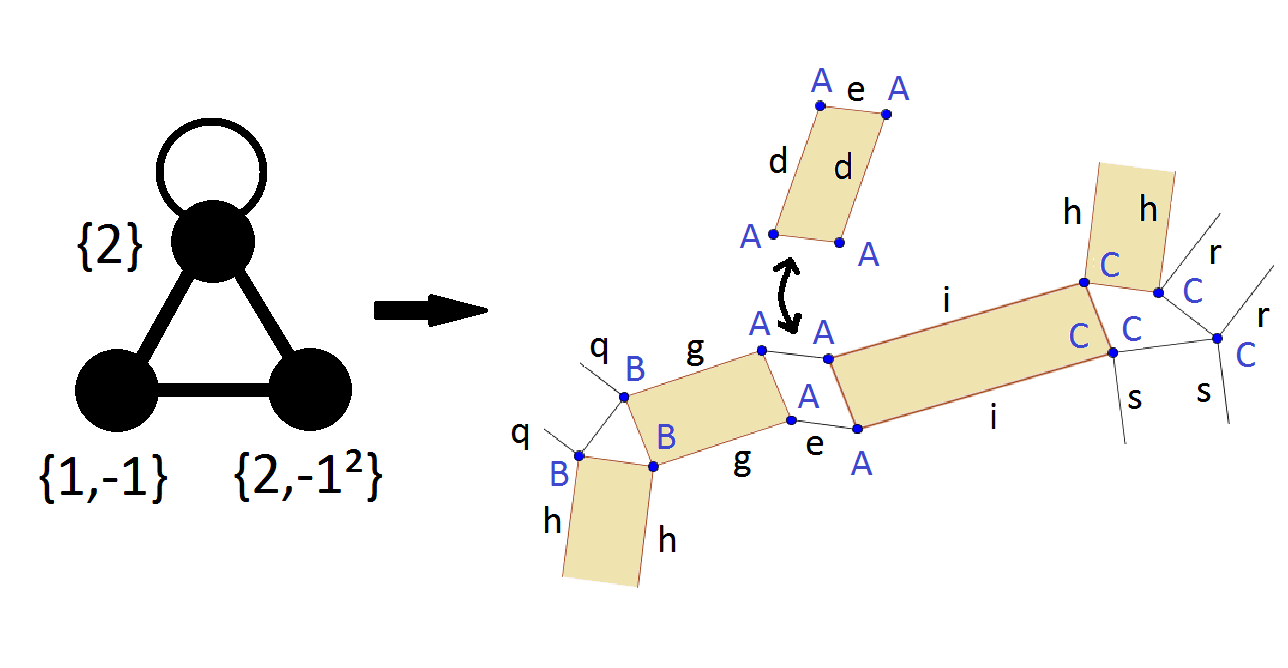}
\caption{A graph representation of $\mathcal{H}^{1}(2^{2},1,-1^{3})$ and a flat realization. The two zeroes of order two are $A$ and $C$ whereas the zero of order one is $B$.}
\end{figure}

\begin{defn}
Let $\mathcal{H}^{k}(a_1,\dots,a_n,-b_1,\dots,-b_p)$ be a stratum of $k$-differentials on a surface of genus $g$ such that $k=1\ or\ 2$ and $s$ is a nonnegative integer. A \textit{graph representation of level $s$} of the stratum is defined by the following data $(G,f_0,\dots,f_s,w_0,\dots,w_s)$. $G$ is a connected multigraph (there can be more than one edge between two vertices) with $s+1$ vertices of valencies $v_0,\dots,v_s$ and with $t$ edges. Nonnegative integers $w_0,\dots,w_s$ are weights associated to each vertex (they correspond to the genus of each connected component in the decomposition). Numbers $a_1,\dots,a_n,-b_1,\dots,-b_p$ are split into a partition $f_{0},\dots,f_{s}$ of families that correspond to the subsets of zeroes and poles assigned to each connected component in the decomposition. They have to satisfy the following conditions:\newline
(i) there is at least one number among $(a_1,\dots,a_n)$ in each family $f_i$ (there must be a conical singularity in each component),\newline
(ii) the sums $\sigma(i)$ of the numbers in each family $f_i$ satisfy $\sigma(i) =k.v_{i}+2k.w_{i}-2k$,\newline
(iii) only in the case $k=1$, if cutting an edge splits the graph into two connected components, then there is a number among $(-b_1,\dots,-b_p)$ in at least one family of each connected component.\newline

As a consequence of formula expressed in condition (ii), we have $t=s+g-\sum \limits_{i=0}^s w_i$.\newline
Besides, we have $0 \leq \sum \limits_{i=0}^s w_i \leq g$.\newline
\newline
A \textit{graph representation} is \textit{pure} when weights $w_0,\dots,w_s$ are identically zero. In this case, $\dfrac{\sigma}{k}$ is the canonical divisor of graph $G$, see \cite{Ba}.\newline
One can always transform a graph representation $(G,f_0,\dots,f_s,w_0,\dots,w_s)$ into a pure graph representation by attaching to every vertex a number of loops equal to its weight and reducing every weight to zero.\newline
\end{defn}

In the following, we distinguish strata for which there is a combinatorial obstruction to existence of cylinders.

\begin{defn}
Strata of genus zero whose graph representations are all of level zero are called {irreducible strata}. Any other stratum is a \textit{reducible stratum}.\newline
In particular, for strata of genus zero, a necessary and sufficient condition for being an irreducible stratum is that we cannot split the singularity pattern into two families such that the sum of the numbers in each family is $-k$.\newline
\end{defn}

In Theorem 2.3, we prove that the only obstruction is combinatorial. Every combinatorially possible graph representation is actually realized by some flat surface in the stratum.\newline

\begin{thm}
For every graph representation $(G,f_0,\dots,f_s,w_0,\dots,w_s)$ of a stratum $\mathcal{H}$, there is a flat surface $X$ and a family $F$ of cylinders such that $(X,F)$ realizes the graph representation $(G,f_0,\dots,f_s,w_0,\dots,w_s)$ of the stratum $\mathcal{H}$.\newline
Besides, we can choose $(X,F)$ such that the closed geodesics of all cylinders of the family $F$ share the same direction.\newline
\end{thm}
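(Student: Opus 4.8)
\emph{Sketch of proof.} The plan is to build $X$ by constructing, for each vertex of $G$, one flat surface with boundary, and then gluing these pieces together along flat cylinders prescribed by the edges of $G$. The model for a single vertex is the following: to vertex $i$ I want to attach a connected flat surface $C_i$ of genus $w_i$ carrying exactly the zeroes and poles of the family $f_i$, whose boundary is a disjoint union of $v_i$ closed geodesics, all of them horizontal for the flat metric of the $k$-differential. Capping each boundary circle of $C_i$ by a half-infinite horizontal cylinder produces a closed flat surface: each cap contributes a pole of order exactly $k$ — around such a pole the flat metric is a half-infinite cylinder — so the capped surface lies in $\mathcal{H}^{k}(f_i,(-k)^{v_i})$, and the Gauss--Bonnet identity for it reads $\sigma(i)-kv_i=k(2w_i-2)$, i.e.\ condition (ii). Conversely, a flat surface in $\mathcal{H}^{k}(f_i,(-k)^{v_i})$ whose $v_i$ poles of order $k$ have parallel cylindrical ends of equal circumference yields, after rotating those ends to the horizontal direction, rescaling, and truncating the half-infinite cylinders, a valid $C_i$ all of whose boundary circles have a prescribed common length $\ell$.

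So building the pieces reduces to realizing, inside each stratum $\mathcal{H}^{k}(f_i,(-k)^{v_i})$, a flat surface with prescribed residues (resp.\ $k$-residues) at the $v_i$ added poles; that these strata are non-empty is standard (for $k=1$, \cite{Bo}; the genus-zero exceptions with too few singularities are exactly those ruled out by condition (i) and the standing assumptions $n,p>0$, $n+p\ge3$). Fix once and for all a common circumference $\ell$ for all cylinders. Two cylindrical ends glued by a flat cylinder must have equal circumference, and the orientation-reversing identification of their boundary circles forces the residues of the two capping poles of an edge to be opposite; writing the residue at a half-edge of $e$ as $\pm i\,x_e$ turns the residue theorem on each capped $C_i$ into Kirchhoff's law — the flow $x$ on the edge set of $G$ must be conserved at every poleless vertex — and we also need $x_e\ne0$ on every edge. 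Condition (iii) is precisely what guarantees such a flow: a non-bridge edge carries a nonzero conserved circulation around a cycle, while a bridge edge separates $G$, by (iii), into two parts each containing a pole-vertex and therefore carries a nonzero conserved unit of flow between them; since finitely many proper subspaces cannot cover the space of conserved flows, some such flow is nonzero on all edges. At a vertex with a pole in $f_i$ the residues of the true poles are constrained only through their sum, which is absorbed by adjusting one of them. (For $k=2$ there is no residue obstruction of this kind, so any choice of equal circumferences works and condition (iii) is unnecessary.)

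Finally I would glue: for each edge of $G$ joining vertices $i$ and $j$ (possibly $i=j$), attach the two corresponding boundary circles of $C_i$ and $C_j$ to the two ends of a horizontal flat cylinder of circumference $\ell$, with arbitrary height and twist. The resulting $X$ is a closed flat surface whose singularities are $\bigcup_i f_i$, i.e.\ the singularity pattern of $\mathcal{H}$; it is connected because $G$ is; and a genus count — the $s$ edges of a spanning tree of $G$ merge the $s+1$ pieces without creating genus, each of the remaining $t-s$ edges adds one handle — gives genus $\sum_i w_i+(t-s)=g$, using $t=s+g-\sum_i w_i$, so $X\in\mathcal{H}$. Taking $F$ to be the $t$ glued cylinders, cutting $X$ along their core curves returns the pieces $C_i$ with the prescribed genera and singularity families, linked according to $G$; thus $(X,F)$ realizes the graph representation, and as every cylinder of $F$ is horizontal, the last assertion follows. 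I expect the real difficulty to be the middle step — realizing prescribed residues, i.e.\ controlling period coordinates, inside each auxiliary stratum, together with the global combinatorial choice making those prescriptions compatible around $G$ — while the rest is an explicit polygonal construction of the pieces, standard non-emptiness of strata, or the bookkeeping above.
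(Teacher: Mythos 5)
Your overall strategy coincides with the paper's: one auxiliary surface per vertex taken in the stratum $\mathcal{H}^{k}(f_i,(-k)^{v_i})$, cylinders for the edges, compatibility of residues written as a Kirchhoff-type linear system on the edges of $G$, and a genericity argument ("finitely many proper subspaces cannot cover the solution space") to get a solution that is nonzero on every edge. However, the step you explicitly defer at the end is not a routine period-coordinate fact but the technical heart of the proof. The paper proves it as Lemma 4.16 (for $k=1$, prescribed residues) and Lemma 4.18 (for $k=2$, prescribed holonomy vectors of the boundaries of the domains of the added double poles), and both lemmas carry a genuine hypothesis: the prescribed vectors must either span $\mathbb{C}$ as an $\mathbb{R}$-vector space or have exactly two nonzero entries. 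A substantial portion of the paper's proof of Theorem 2.3 then consists in checking, via the projection of the solution space onto the variables attached to each vertex, that a generic solution satisfies one of these hypotheses at every vertex; the case $k=2$ needs further reductions (elimination of bridges by rotating and rescaling, reduction to a single conical singularity per family by breaking up, the case where the conical degrees in a family sum to zero). Your parenthetical claim that for $k=2$ "there is no residue obstruction of this kind, so any choice of equal circumferences works" replaces all of this with an assertion and has no substitute for Lemma 4.18.

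Moreover, your plan to make every cylinder horizontal from the outset forces the residues at all added poles of a given vertex to be collinear (purely imaginary), which is precisely the configuration \emph{excluded} by the spanning hypothesis of Lemma 4.16 whenever a vertex has at least three incident half-edges; so even granting that lemma you could not invoke it, and realizability of collinear residue configurations is a genuinely more delicate question that you do not address. (There is also an internal inconsistency: fixing a common circumference $\ell$ for all cylinders contradicts conservation at a poleless vertex of odd valency, since $\pm\ell\pm\ell\pm\ell\neq 0$.) The paper obtains the "same direction" statement differently: it first produces a realization with generic residues, then applies the contraction flow to each vertex component to push all residues into a common direction, and only afterwards pastes the cylinders. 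Without either that deformation trick or a direct realization theorem for collinear residues, your construction of the horizontal pieces, and hence the last assertion of the theorem, is not justified.
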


Theorem 2.3 implies a complete characterization of strata where every surface has finitely many saddle connections.

\begin{cor} Every flat surface in a stratum has finitely many saddle connections if and only if the stratum is irreducible (in particular, $g=0$).
\end{cor}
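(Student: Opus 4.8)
The plan is to derive both implications from the equivalence recalled above — a flat surface carries infinitely many saddle connections if and only if it contains a cylinder of finite area — using Theorem 2.3 for one direction and Lemma 6.1 for the other. Since the notion of graph representation, hence of irreducible stratum, is only defined for $k=1$ or $2$, that is the standing assumption.

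First I would treat a \emph{reducible} stratum and exhibit in it a surface containing a cylinder of finite area, hence with infinitely many saddle connections. If $g=0$, reducibility provides a graph representation $(G,f_0,\dots,f_s,w_0,\dots,w_s)$ of level $s\geq 1$; the bound $0\leq\sum_{i=0}^{s}w_i\leq g=0$ forces every weight to vanish, and then $t=s+g-\sum_i w_i=s\geq 1$, so $G$ has an edge. Theorem 2.3 realizes this representation by a flat surface $X$ together with a non-empty family $F$ of cylinders, each of positive finite area, so $X$ has infinitely many saddle connections. If instead $g\geq 1$, I would write down explicitly the level-zero representation with a single vertex carrying one loop (so $v_0=2$ and $t=1$), weight $w_0=g-1$, and $f_0$ equal to the whole singularity pattern: condition (i) holds since $n>0$, condition (ii) is the identity $k(2g-2)=kv_0+2kw_0-2k=2k+2k(g-1)-2k$, and condition (iii) is vacuous because deleting a loop does not disconnect $G$. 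Theorem 2.3 again supplies a flat surface carrying a cylinder of finite area.

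Conversely, I would show that an \emph{irreducible} stratum — of genus zero with every graph representation of level zero — contains no surface with a cylinder of finite area, hence no surface with infinitely many saddle connections. Assume for contradiction that some $X$ in the stratum has such a cylinder, and let $\gamma$ be one of its core closed geodesics. By Lemma 6.1, cutting $X$ along $\gamma$ (if necessary, along a maximal parallel family of closed geodesics through $\gamma$) yields a genuine graph representation of the stratum, and it has an edge, namely the cylinder containing $\gamma$. But in genus zero every graph representation satisfies $\sum_i w_i=0$ and $t=s$, so a representation with $t\geq 1$ has level $s\geq 1$, contradicting irreducibility.

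The one point that genuinely requires care is the appeal to Lemma 6.1 in the converse direction: one must know that the combinatorial data produced by cutting along core geodesics satisfies \emph{all} of conditions (i)--(iii), in particular condition (iii) when $k=1$ — this is precisely what that lemma guarantees, and it is why the statement is confined to $k\leq 2$. Everything else reduces, once Theorem 2.3 is in hand, to bookkeeping with the relations $t=s+g-\sum_i w_i$ and $0\leq\sum_i w_i\leq g$.
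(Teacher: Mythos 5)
Your proof is correct and takes essentially the same route as the paper: Corollary 5.13 reduces everything to finite-volume cylinders, Theorem 2.3 produces such a cylinder in any reducible stratum, and in an irreducible stratum cutting along a cylinder via Lemma 6.1 yields a level $\geq 1$ representation, which is exactly the argument the paper packages as the bound of Theorem 6.10. Your explicit one-vertex, one-loop representation for $g\geq 1$ merely spells out the paper's implicit claim that positive genus gives a graph representation with an edge.
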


It must be noted that in every stratum of meromorphic $1$-forms or quadratic differential with poles of higher order, we can find flat surfaces which have finitely many saddle connections (Proposition 7.1).\newline

Theorem 2.3 is proved in section 6.\newline

\subsection{Core and chambers}
 
Although flat surfaces with poles of higher order have infinite area, most of their geometry is encompassed in a finitary structure. The \textit{core} of a flat surface with poles of higher order $(X,\phi)$ is the convex hull $core(X)$ of the set of conical singularities of the $k$-differential for the locally Euclidian metric, see Definition 4.1 for details. The concept of core has been introduce in \cite{HKK} in the case of meromorphic quadratic differentials.\newline

Structural changes of the core define a walls-and-chambers structure on the strata, see Definition 4.12. Every topological property of the core is invariant in chambers, see Proposition 4.15.\newline

In Theorem 2.5, we prove a dichotomy between chambers where some surfaces have an infinite number of saddle connections and chambers where the number of saddle connections is globally bounded. In particular, the following theorem forbids existence of a chamber where every surface would have a finite number of saddle connections but where this number could be arbitrarily large. Theorem 2.5 is only about strata of $1$-forms. Since closed geodesics are very different if $k\geq3$, a generalization of this theorem would require deeper insights. Such a theorem would be easier in the case $k=2$ but we would need better combinatorial information about the core in order to get a sharp bound.\newline

We crucially use flat triangulations of the core (triangulations whose edges are saddle connections). For a given flat surface $(X,\phi)$, the number $t$ of ideal triangles in a flat triangulation of $core(X)$ is constant in the chamber $(X,\phi)$ belongs to, see Lemma 4.10. Besides, the numbers $t_1,\dots,t_s$ of ideal triangles in every connected component of the interior of $core(X)$ are also constant in the chamber. We have $t=\sum \limits_{{i=1}}^s t_i$.

\begin{thm}
For every stratum $\mathcal{H}^{1}(a_1,\dots,a_n,-b_1,\dots,-b_p)$ of flat surfaces of genus $g$ and for every chamber $\mathcal{C}$, exactly one of these two statements is true:\newline
- $\mathcal{C}$ is a \textit{chamber of bounded type} : for every surface $(X,\phi)$ in chamber $\mathcal{C}$, we have $|SC| \leq 2g-2+n+p+t+\dfrac{1}{2}\sum \limits_{{i=1}}^s t_{i}(t_{i}-1)$.\newline
- $\mathcal{C}$ is a \textit{chamber of unbounded type} : there is a surface $(X,\phi)$ in chamber $\mathcal{C}$ such that $|SC|=+\infty$.\newline
\end{thm}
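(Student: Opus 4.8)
The plan is to reduce the dichotomy to a bound for an individual surface, and then prove that bound by superimposing saddle connections on a flat triangulation of the core. As recalled in Section 2.1, a flat surface has infinitely many saddle connections if and only if it contains a cylinder of finite area. So inside a fixed chamber $\mathcal{C}$ there are only two possibilities: either some $(X,\phi)\in\mathcal{C}$ contains a finite-area cylinder, so that this surface satisfies $|SC|=+\infty$ and we are in the second alternative; or no surface of $\mathcal{C}$ contains a finite-area cylinder, and I must prove the uniform inequality. The quantities $g,n,p$ are stratum data and $t,t_1,\dots,t_s$ are constant on $\mathcal{C}$ by Lemma 4.10 and the discussion preceding the statement, so the right-hand side is a chamber constant; hence it suffices to prove: if $(X,\phi)$ contains no finite-area cylinder, then $|SC|\le 2g-2+n+p+t+\tfrac12\sum_{i=1}^s t_i(t_i-1)$. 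The two alternatives are visibly mutually exclusive, which also yields the ``exactly one'' assertion.

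Fix such an $(X,\phi)$ and a flat triangulation $T$ of $core(X)$ into $t$ ideal triangles. Its edges are saddle connections, and since $core(X)$ is a compact surface of genus $g$ with one boundary component per pole of higher order, having the $n$ conical singularities as its only vertices and $t$ triangular faces, a computation with Euler's formula shows that $T$ has exactly $2g-2+n+p+t$ edges; these account for the terms $2g-2+n+p+t$ of the bound. (If $core(X)$ degenerates to a graph or a point, $s=0$ and the relevant count is elementary and treated directly.) It remains to bound the saddle connections that are not edges of $T$. Such a saddle connection lies in the closure of a single connected component $\mathcal{C}_i$ of the interior of $core(X)$: distinct such components meet only along conical singularities, through which no saddle connection can pass. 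Moreover, not being an edge of $T$, it must cross at least one edge of $T$ inside $\mathcal{C}_i$. So it is enough to prove that each $\mathcal{C}_i$ contains at most $\binom{t_i}{2}=\tfrac12 t_i(t_i-1)$ saddle connections that are not edges of $T$.

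This is where the absence of a finite-area cylinder enters, through two facts. First: if a saddle connection passes through one triangle of $T$ at two distinct moments of its itinerary, then $X$ contains a finite-area cylinder, obtained by closing up the annulus swept out by the geodesics parallel to the segment of the saddle connection running between the two (parallel) chords cut off from that triangle; hence the triangles in the itinerary of any saddle connection are pairwise distinct. Second, and specific to $k=1$, so that every conical singularity is a genuine zero and thus has cone angle at least $4\pi$: two distinct saddle connections cross at most once, since two or more transverse intersections would enclose a flat polygon whose angle defect, computed by Gauss--Bonnet, is incompatible with cone angles $\ge 4\pi$. Combining these, the itinerary of a non-edge saddle connection of $\mathcal{C}_i$ runs through a sequence of pairwise distinct triangles, with distinct first and last triangles, and one checks — using the crossing bound to preclude two such saddle connections from sharing the same unordered pair of extreme triangles — that the map sending a non-edge saddle connection to the pair formed by the triangle containing its initial segment and the triangle containing its final segment is an injection into the $\binom{t_i}{2}$ unordered pairs of distinct triangles of $\mathcal{C}_i$. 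Summing over $i$ and adding back the $2g-2+n+p+t$ edges of $T$ gives the inequality.

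The main obstacle is the combinatorial core of the last paragraph: proving the ``repeated triangle implies finite-area cylinder'' lemma with the care that a genuine nondegenerate, positive-area cylinder requires — in particular handling long itineraries between the two visits and verifying that the closed geodesics one produces do not degenerate — and then extracting from ``no repeated triangle'' together with ``pairwise intersection at most once'' exactly the clean count $\tfrac12 t_i(t_i-1)$ rather than a coarser bound. It is precisely here that the cone-angle hypothesis forcing $k=1$ and a delicate analysis of how a geodesic can thread the triangulation are indispensable.
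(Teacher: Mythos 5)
Your overall architecture coincides with the paper's (ideal triangulation of $core(X)$ coming from a maximal geodesic arc system, the count $|A|=2g-2+n+p+t$ from Lemma 4.10, and bounding the remaining saddle connections by pairs of triangles inside each component of $\mathcal{I}\mathcal{C}(X)$), but the pivotal step is different and is where the proposal breaks down. You reduce the theorem to the surface-by-surface statement ``if $X$ itself contains no finite-area cylinder then $|SC|(X)$ satisfies the bound'', and you support it with the lemma that a saddle connection visiting the same triangle twice forces a finite-area cylinder \emph{in $X$}. The justification offered --- ``closing up the annulus swept out by the geodesics parallel to the segment between the two parallel chords'' --- does not work: the parallel leaves follow the saddle connection only along the compact subsegment between the two chords, so what you obtain is a flat strip joining two distinct parallel chords of the same triangle, not a closed annulus; the saddle connection terminates at zeros, and the nearby leaves then typically diverge and escape to poles, so nothing closes up. What this configuration genuinely produces is a simple loop of topological index zero built from two geodesic pieces and two turns, and the paper explicitly cautions, just before its proof of Theorem 2.5, that such a loop is \emph{not} known to imply a cylinder in the same surface. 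That is exactly why the paper's proof does not assert anything about $X$ itself: it inserts a thick flat ribbon along the loop, producing a \emph{different} surface in the same chamber which does contain a cylinder, and this is what makes the statement a dichotomy about chambers rather than about individual surfaces. Your central lemma is therefore unsubstantiated (and quite possibly false), and with it the whole reduction collapses.

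Two further steps are also not established. First, the claim that on a translation surface two distinct saddle connections cross at most once is not justified: the Gauss--Bonnet bigon argument requires the two intersection points to cut off an embedded disk containing no poles of higher order, which fails in general (the bigon need not bound a disk in positive genus, and in genus zero both complementary regions may contain poles, so the compact Gauss--Bonnet computation does not apply). Second, the decisive counting step --- that the map sending a non-edge saddle connection to the unordered pair of triangles containing its initial and final segments is injective --- is only asserted (``one checks''), and the crossing bound cannot deliver it, since two saddle connections with the same extreme triangles need not cross at all (they may simply share corners). In the paper this is precisely the point where the two saddle connections are concatenated into the index-zero loop and the ribbon surgery is invoked again, so that a failure of injectivity forces the chamber, not the surface, to be of unbounded type. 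To repair your proposal you would either have to prove your strengthened lemma about cylinders in $X$ itself --- which the paper suggests is out of reach --- or fall back on the paper's surgery argument, which changes the surface within the chamber and yields only the chamber-level dichotomy actually stated in Theorem 2.5.
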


Since there are $\dfrac{d(d-1)}{2}$ geodesic segments in a convex flat $d$-gon, we can construct translation surfaces with poles (in the case $k=1$) whose core is a convex flat $d$-gon and that realizes the case of equality of the bound, see Figure 8 for an example. Therefore, our bound is sharp in a certain way.\newline

Theorem 2.5 provides a bound on the number of saddle connections for translation surfaces with poles (when $k=1$) that belong to chambers where no surface has an infinite number of saddle connections. In particular, for irreducible strata, it implies a bound that holds in the whole stratum.

\begin{cor}
Every chamber in a stratum $\mathcal{H}^{1}$ is of bounded type if and only if $\mathcal{H}$ is an irreducible stratum. In this case, for every flat surface in $\mathcal{H}$, we have:
$$|SC| \leq 2g-2+n+p+\dfrac{(4g-4+2n+p)(4g+2n+p-3)}{2}$$.
\end{cor}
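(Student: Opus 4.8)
My plan is to derive both statements from the dichotomy of Theorem 2.5 together with Corollary 2.4, the only genuine work being a topological upper bound on the number $t$ of triangles. For the equivalence: if $\mathcal{H}$ is irreducible then Corollary 2.4 says every flat surface of $\mathcal{H}$ has finitely many saddle connections, so no chamber can contain a surface with $|SC|=+\infty$, and Theorem 2.5 forces every chamber to be of bounded type. Conversely, if every chamber of $\mathcal{H}^{1}$ is of bounded type, then every $(X,\phi)\in\mathcal{H}$ lies in some chamber and hence $|SC|\le 2g-2+n+p+t+\tfrac12\sum_{i=1}^{s}t_{i}(t_{i}-1)<+\infty$; so every surface of $\mathcal{H}$ has finitely many saddle connections and Corollary 2.4 gives that $\mathcal{H}$ is irreducible (in particular $g=0$).

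For the explicit bound, fix an irreducible $\mathcal{H}$ and $(X,\phi)\in\mathcal{H}$; its chamber is of bounded type by the above, so $|SC|\le 2g-2+n+p+t+\tfrac12\sum_{i=1}^{s}t_{i}(t_{i}-1)$. Using $t=\sum_{i=1}^{s}t_{i}$, I would rewrite $t+\tfrac12\sum_{i=1}^{s}t_{i}(t_{i}-1)=\sum_{i=1}^{s}\tfrac{t_{i}(t_{i}+1)}{2}\le\tfrac{t(t+1)}{2}$, the last inequality being $\bigl(\sum_{i}t_{i}\bigr)^{2}\ge\sum_{i}t_{i}^{2}$. Since $4g+2n+p-3=(4g-4+2n+p)+1$ and $x\mapsto\tfrac{x(x+1)}{2}$ is nondecreasing for $x\ge0$, it remains only to prove the combinatorial inequality $t\le 4g-4+2n+p$, which then produces exactly the stated bound.

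To prove $t\le 4g-4+2n+p$, I would take a flat triangulation $\mathcal{T}_{0}$ of $core(X)$ — its vertices are forced to be the $n$ conical singularities, since its edges are saddle connections and every conical singularity lies in the core, and it has $t$ triangles by the discussion preceding Theorem 2.5 — and extend it, using the structural results of Section 4 on $X\setminus core(X)$, to an ideal triangulation $\mathcal{T}$ of $X$ whose vertex set consists exactly of the $n$ conical singularities and the $p$ poles (the latter as ideal vertices). Combinatorially $\mathcal{T}$ is a triangulation of the closed genus $g$ surface with $n+p$ vertices, so it has $2(n+p)-2\chi(X)=2n+2p+4g-4$ triangles, of which $t$ lie inside $core(X)$. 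For a connected component $D$ of $X\setminus core(X)$, with $\mu_{D}$ poles among its vertices, $\beta_{D}\ge1$ boundary circles and genus $\gamma_{D}$, combining $V_{D}-E_{D}+F_{D}=2-2\gamma_{D}-\beta_{D}$ with $3F_{D}=2E_{D}-E^{\partial}_{D}$ and $V_{D}\ge\mu_{D}+E^{\partial}_{D}\ge\mu_{D}+\beta_{D}$ yields $F_{D}\ge 2\mu_{D}+3\beta_{D}-4\ge\mu_{D}$ when $\mu_{D}\ge1$, and $F_{D}\ge1\ge\mu_{D}$ when $\mu_{D}=0$. Summing over components, the triangles of $\mathcal{T}$ lying outside $core(X)$ number at least $\sum_{D}\mu_{D}=p$, hence $t\le(2n+2p+4g-4)-p=4g-4+2n+p$.

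The only step that is not routine is this last one, and its real content is the geometric input from Section 4: that $core(X)$ is a compact subsurface admitting a flat triangulation on the conical singularities, and that $X\setminus core(X)$ is a finite union of polar regions which can be triangulated by geodesic (ideal) triangles extending $\mathcal{T}_{0}$ without new vertices. I expect that to be the main obstacle to fill in carefully; granting it, the Euler-characteristic count above is elementary, and the example of Figure 8 (where $s=1$ and $t=4g-4+2n+p$, so both inequalities used become equalities) shows the resulting bound cannot be improved.
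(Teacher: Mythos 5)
Your reduction of the explicit bound to the inequality $t\le 4g-4+2n+p$ is sound, and your Euler-characteristic proof of that inequality (extending a flat triangulation of $core(X)$ to an ideal triangulation of the closed surface with the $p$ poles as extra vertices, then observing that each of the $p$ polar disks of Lemma 4.5 carries at least one triangle) is a legitimate alternative to what the paper actually does: the paper simply cites the angle count of Lemma 7.4, which gives $t=4g-4+2n+2p-\beta$ with $\beta\ge p$ because each domain of pole is bounded by at least one saddle connection. The algebraic step $t+\tfrac12\sum_i t_i(t_i-1)=\sum_i\tfrac{t_i(t_i+1)}{2}\le\tfrac{t(t+1)}{2}$ is also exactly what is needed, and your remark on the convex-polygon case matches the paper's sharpness discussion.

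There is, however, a genuine gap: the assertion that ``every $(X,\phi)\in\mathcal{H}$ lies in some chamber'' is false, since chambers are by definition the connected components of the complement of the discriminant $\Sigma$, and surfaces on $\Sigma$ belong to no chamber. This breaks two of your steps. First, in the converse direction of the equivalence you conclude that every surface of $\mathcal{H}$ has finitely many saddle connections from the bounded-type hypothesis, but your argument only covers surfaces off $\Sigma$; the paper instead argues by contraposition, producing from reducibility a surface with $|SC|=+\infty$ and then using that the locus $\{|SC|=+\infty\}$ is open (Corollary 5.13) while $\Sigma$ has real codimension one (Proposition 4.14) to place such a surface inside a chamber. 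Second, your derivation of the explicit bound applies Theorem 2.5 only to surfaces lying in a chamber, so the clause ``for every flat surface in $\mathcal{H}$'' is not established for surfaces on $\Sigma$; the paper closes this by lower semicontinuity of $|SC|$ (Proposition 3.3), since a discriminant surface is a limit of surfaces in chambers where the uniform bound holds. Both repairs use results already available in the paper, so the gap is fixable, but as written your proposal does not address the discriminant at all.
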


Theorem 2.5 and Corollary 2.6 are proved in section 8.\newline

The structure of the paper is the following: \newline
- In Section 3, we recall the background about flat surfaces with poles of higher order: flat metric, saddle connections, moduli space and their components. \newline
- In Section 4, we introduce one of our main tools: the core of a flat surface with poles and its associated wall-and-chambers structure. We also introduce the notion of maximal geodesic arc system.\newline
- In Section 5, we prove some preliminary dynamical results. Then, we describe the leaves of the vertical foliation, obtaining a decomposition theorem for directions of saddle connections. We characterize the accumulation points of directions of saddle connections as directions of finite volume cylinders or minimal components. We end with a systematic construction of surfaces with a degenerate core.\newline
- In Section 6, we introduce the notion of reducibility index for strata of $1$-forms and quadratic differentials. We give bounds on the number of finite volume invariant components. Then, we characterize the strata and chambers where some surfaces have an infinity of saddle connections. \newline
- In Section 7, we get lower and upper bounds on the maximal number of noncrossing saddle connections.\newline
- In Section 8, we prove an upper bound on the number of saddle connections in chambers and strata where there is no finite volume cylinder.\newline

\textit{Acknowledgements.} I thank my doctoral advisor Anton Zorich for motivating my work on this paper and many interesting discussions. I am grateful to Corentin Boissy, Quentin Gendron, Elise Goujard and Ben-Michael Kohli for their valuable remarks. This work is supported by the ERC Project "Quasiperiodic" of Artur Avila.\newline

\section{Flat structures defined by meromorphic differentials}

\subsection{Flat structures}

Let $X$ be a compact Riemann surface and let $\phi$ be a meromorphic $k$-differential, that is a differential of the kind $f(z)dz^{k}$. $\Lambda$ is the set of conical singularities of $\phi$. They are the poles of order strictly smaller than $k$ and the zeroes of arbitrary order. $\Delta$ is the set of poles of order at least $k$ of $\phi$. They will be referred to as \textit{poles of higher order}.\newline

Outside $\Lambda$ and $\Delta$, $\phi$ is locally the $k^{th}$ power of a holomorphic differential $\omega$. Integration of $\omega$ in a neighborhood of $z_{0}$ gives local coordinates whose transition maps are of the type $z \mapsto e^{i\theta}z+c$ where $\theta\in\lbrace 0,\dfrac{2\pi}{k}\dots,(k-1)\dfrac{2\pi}{k} \rbrace$.\newline

The pair $(X,\phi)$ seen as a smooth surface with such an atlas is called a \textit{flat surface with poles of higher order}.\newline

In a neighborhood of a zero of order $a>0$, the metric induced by $\phi$ admits a conical singularity of angle $(1+\dfrac{a}{k})2\pi > 2\pi$. In a neighborhood of a pole of order $1\leq a \leq k-1$, the metric induced by $\phi$ admits a conical singularity of angle $(1-\dfrac{a}{k})2\pi < 2\pi$, see Strebel in \cite{St} for quadratic differentials and \cite{BCGGM1} for general $k$-differentials.\newline

\subsection{Local model for poles of higher order} 

In a neighborhood of a pole of order $k$, the $k$-differential is of the form $\dfrac{r}{z^{k}}(dz)^{k}$ where $r$ is called the residue at the pole. The neighborhood of a pole of order $k$ is a semi-infinite cylinder with one end. The residue of the differential is the $k^{th}$ power of the holonomy vector of the closed geodesics defined by this cylinder, considered up to a rotation of angle $\dfrac{2\pi}{k}$. This shows that the residue at a pole of order $k$ is always nonzero, see section 3 in \cite{BCGGM1}. Besides, it shows that the residue is the only local invariant of the pole.\newline

In a neighborhood of a pole of order $b>k$, the differential is of the form $
(z^{\frac{b}{k}}+\dfrac{s}{z})^{k}(dz)^{k}$ where $s^{k}$ is the residue at the pole. Once more the residue is the only local invariant of the pole, see section 3 in \cite{BCGGM1} for details.\newline

A flat cone is the flat surface of genus zero with one conical singularity of order $a$ and one pole of order $b$ such that $a-b=-2k$. In the case $a=0$, there is no conical singularity but there is a unique pole of order $2k$. This surface is the flat plane (the pole of order $2k$ is the point at infinity).\newline
\newline
The neighborhood of a pole of order $b>k$ with trivial residue is obtained by taking an infinite cone of angle $(\dfrac{b}{k}-1)2\pi$ and removing a compact neighborhood of the conical singularity.\newline
\newline
In order to get neighborhoods of poles of order $b>k$ with non trivial residue, we follow Boissy in \cite{Bo}. We take a flat cone of angle $(\dfrac{b}{k}-1)2\pi$ and remove an $\epsilon$-neighborhood of a semi-infinite line starting from the conical singularity and a neighborhood of the conical singularity. Then, we identify the resulting boundaries of the neighborhood by an isometry. Rotating and rescaling gives a pole of order $k$ with the residue we want.\newline

\begin{lem} Every pole of higher order has a basis of neighborhoods whose complement is convex.
\end{lem}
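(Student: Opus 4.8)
The plan is to reduce the statement, by one soft observation, to a short computation in each of the three local models of a pole-neighbourhood described above. The observation is this: suppose that for a neighbourhood $N_{0}$ of the pole $P$ (as given by the local models) we can produce a continuous function $h\colon N_{0}\to\mathds{R}$ which is (a) convex along every geodesic arc contained in the interior of $N_{0}$, (b) such that $h(p)\to+\infty$ as $p\to P$, and (c) bounded on $\partial N_{0}$, say by $c$. Then for $t>c$ the set $N_{t}:=\{h>t\}$ is a neighbourhood of $P$ contained in $N_{0}$, the family $(N_{t})_{t>c}$ is a neighbourhood basis of $P$, and $X\setminus N_{t}$ is convex: if a geodesic segment of $X$ had both endpoints in $X\setminus N_{t}$ but met $N_{t}$, it would contain a subarc lying in the interior of $N_{0}$ whose endpoints lie on $\partial N_{0}$ (where $h\le c\le t$) or are endpoints of the original segment (where $h\le t$), and convexity of $h$ along that subarc — a convex function on an interval never exceeds the larger of its endpoint values — would prevent the subarc from entering $\{h>t\}$. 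So everything comes down to exhibiting such an $h$.

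For a pole of order $k$, $N_{0}$ is a half-infinite flat cylinder and I would take for $h$ the flat ``length along the cylinder'' coordinate: it is an affine function of arclength along each geodesic, hence convex, tends to $+\infty$ at $P$, and is constant on $\partial N_{0}$. For a pole of order $b>k$ with trivial residue, $N_{0}$ is isometric to the complement of a metric ball around the apex $O$ of the flat cone $C$ of angle $2\pi(\tfrac{b}{k}-1)$, and I would take $h=\dist(\cdot,O)$. The only thing to check is that $\dist(\cdot,O)$ is convex along geodesics of $C$: a geodesic that avoids $O$ can be unrolled, after cutting $C$ along a ray through $O$ that it does not meet, to a straight segment in a Euclidean sector — or, if it winds by more than $2\pi$, in a branched cover of the plane — and the distance to a fixed point is convex along straight segments; a geodesic through $O$ has for $h$-profile a piecewise-linear ``V'', which is again convex. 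Properties (b) and (c) are immediate.

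The remaining case — a pole of order $b>k$ with nontrivial residue — is the real difficulty. Such a neighbourhood is obtained, following Boissy, by cutting the cone $C$ of angle $2\pi(\tfrac{b}{k}-1)$ along an $\epsilon$-neighbourhood of a ray issuing from the apex, deleting a neighbourhood of the apex, and regluing the two sides of the slit by an isometry; the holonomy around $P$ is then an isometry $g$ of the developed cone. When the rotational part of $g$ is nontrivial — which is exactly the case $k\nmid b$ — the isometry $g$ has a fixed point $O'$, and the function $h(p):=\dist(\operatorname{dev}(\tilde p),O')$ is independent of the chosen lift $\tilde p$ (because $g$ fixes $O'$), is convex along geodesics (which develop to straight segments, along which the distance to a fixed point is convex), and satisfies (b) and (c); the argument then closes as before. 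When $g$ is a nontrivial translation — which happens for every higher-order pole carrying a residue when $k=1$, and when $k\mid b$ — the function-theoretic argument above breaks down, since a translation-invariant function that is convex along geodesics cannot tend to $+\infty$ at $P$, and one has to argue directly with Boissy's model. Here the key point is that the shear is carried out in the direction of the slitting ray, i.e. ``along the axis of the end'', so that outside a compact set $N_{0}$ differs from the trivial-residue model only by this bounded shear; I would show that a minimising geodesic of $X$ crosses the reglued ray only a bounded number of times, deduce that $\dist(\cdot,O)$, although now multivalued with bounded jumps across that ray, is convex up to a bounded additive error, and take a suitably enlarged neighbourhood $N_{t}$; equivalently one exhibits directly, for each large $t$, a convex region bounded by finitely many geodesic segments and rays towards $P$ whose complement is the desired $N_{t}$. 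Tracking the shear in this last case is where essentially all the work is; the rest is the soft reduction above together with the two elementary model computations.
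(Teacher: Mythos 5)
Your soft reduction (superlevel sets of a function $h$ that is convex along geodesic arcs in $N_{0}$, proper at the pole and bounded on $\partial N_{0}$) is sound, and it does settle the order-$k$ case and the cases where the holonomy around the pole has a fixed point in the developed picture (trivial residue, or $k\nmid b$). But the remaining case --- nonzero residue with translation holonomy, i.e. $k\mid b$, which includes \emph{every} pole of order $b\geq 2$ of a meromorphic $1$-form with nonzero residue, the paper's principal setting --- is exactly where you stop: you only describe what you ``would show'', and by your own account that is where essentially all the work lies. So the proposal does not prove the lemma in its main case.

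Moreover, the fallback you sketch does not feed back into your own reduction. If $h$ is only convex up to a bounded additive error $C$ along geodesics, you obtain that a geodesic between two points of $\{h\leq t\}$ stays in $\{h\leq t+C\}$; this makes no single complement $X\setminus N_{t}$ convex, and ``taking a suitably enlarged neighbourhood'' does not repair it, since the enlarged sublevel set suffers the same defect --- while exact convexity is what the lemma asserts and what is used later (e.g.\ in Proposition 5.2, where a straight trajectory entering such a neighbourhood must never return). In addition, your auxiliary claim that a geodesic crosses the reglued ray only a bounded number of times is asserted, not proved, and is stated for minimizing geodesics, whereas the convexity needed here concerns arbitrary locally geodesic segments. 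Note that the paper's own (terse) proof avoids tracking the shear altogether: it reduces to the standard local models via the fact that pole neighbourhoods with the same order and the same residue are isometric, and exhibits the convex complement directly in the model (sub-cylinders bounded by closed geodesics for poles of order $k$; the complement of a small metric disk around the conical point seen in the coordinate $w=1/z$ for poles of order $b>k$). To complete your argument you would either have to carry out the translation-holonomy analysis in full, or reduce to such a model as the paper does.
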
 

\begin{proof} Neighborhoods of poles of order $k$ are semi-infinite cylinders and thus are spanned by a family of parallel closed geodesics. We can pick a converging family of these geodesics as boundaries of neighborhoods of a basis.\newline
We follow the classical construction of neighborhoods of poles of order $b > k$. We consider a conical singularity of order $2k-b$, draw a small disk $V$ of this zero. After a change of coordinates $w=1/z$, the complement to $V$ is a convex neighborhood of the pole as required for the flat metric. The neighborhoods of poles having same degrees and same residues are isometric so the lemma is proved in all cases. 
\newline
\end{proof}

\subsection{Moduli space}

If $(X,\phi)$ and $(X',\phi')$ are flat surfaces such that there is a biholomorphism $f$ from $X$ to $X'$ such that $\phi$ is the pullback of $\phi'$, then $f$ is an isometry for the flat metrics defined by $\phi$ and $\phi'$.\newline

As in the case of Abelian differentials, we define the moduli space of meromorphic $k$-differentials as the space of equivalence classes of flat surfaces with poles of higher order $(X,\phi)$ up to biholomorphism preserving the differential.\newline

We denote by $\mathcal{H}^{k}(a_1,\dots,a_n,-b_1,\dots,-b_p)$ the \textit{stratum} that corresponds to meromorphic $k$-differentials with singularities degrees $a_1,\dots,a_n,-b_1,\dots,-b_p$. A necessary condition for a stratum to be nonempty is that $\sum \limits_{i=1}^n a_i - \sum \limits_{j=1}^p b_j = k(2g-2)$ for some integer $g \geq 0$. The integer $g$ is the genus of the underlying Riemann surface. For sake of simplicity we will use the term of strata of genus $g$ to designate strata of differentials that exist only in surfaces of genus $g$.\newline

\subsection{Surgery tools}

There is a surgery called \textit{adding a handle}. We cut a small slit inside a surface and paste the two ends of a cylinder along the two sides of the slit. Such a surgery increases the genus of the surface by one. Besides, it is a local surgery that does not change the geometry of the surface outside a compact subset.\newline

In the case $k=1\ or\ 2$, there is another surgery called \textit{breaking up conical singularities} that starts from a flat surface with a conical singularity of order $a+b$ and provides a flat surface with two conical singularities of orders $a$ and $b$ connected by a short geodesic, see section 2.3 in \cite{La}.\newline
In the case $k=2$, when $a+b$, $a$ or $b$ are odd, the most common construction does not always work. We proceed as follows. We denote by $H$ the conical singularity of order $a+b$. We choose a direction $\theta$ that is the not the direction of a saddle connection. Then, we consider a geodesic trajectory $\alpha$ starting from $H$ and that follows direction $\theta$. Since $\theta$ is the not the direction of a saddle connection, trajectory $\alpha$ converges to a pole of higher order (see Section 5 for details about types of trajectories). Next, we consider a geodesic trajectory $\beta$ starting from $H$ and whose angle at $H$ with $\alpha$ is $(a+1)\pi$. For the same reasons, $\beta$ converges to a pole. Besides, $\alpha$ and $\beta$ follow the same direction $\theta$ so do not cross each other. We cut along trajectories $\alpha$ and $\beta$ and then paste an infinite strip of thickness equal to $\epsilon$ in such a way the angular sector of $H$ is separated into two sectors of angle $(a+1)\pi$ and $(b+1)\pi$ that are respectively above and below the strip. What we get is a surface with two conical singularities of order $a$ and $b$ connected by a short geodesic of length $\epsilon$ that is a transverse segment of the strip. It is crucial to note that this surgery is not local.\newline

\subsection{Properties of strata}

In the case $k=1$, there is a complete characterization of nonempty strata. In addition to the previous condition about genus, a given stratum $\mathcal{H}^{1}(a_1,\dots,a_n,-b_1,\dots,-b_p)$ is nonempty if and only if $\sum \limits_{j=1}^p b_j > 1$. If there are poles, there should not be just one simple pole (because its residue would be zero and the width of the cylinder would be zero either).\newline

In the case $k=2$, when there is no pole of higher order, a stratum is nonempty if and only if $\sum \limits_{i=1}^n a_i = 4g-4$ for some integer $g \geq 0$ and $(a_{1},\dots,a_n)$ is not one of the two following singularity patterns: $(1,-1)$ and $(1,3)$, see \cite{La}.\newline

There is no such empty stratum when there are poles of higher order. This result only use the previously introduced surgery tools.\newline

\begin{prop}
For every singularity pattern $(a_{1},\dots,a_{n},-b_{1},\dots,-b_{p})$ such that we have:\newline
(i) $\sum \limits_{i=1}^n a_i - \sum \limits_{j=1}^p b_j = 4g-4$.\newline
(ii) there is at least one pole of higher order ($p\geq1$).\newline
Then stratum $\mathcal{H}^{2}(a_{1},\dots,a_{n},-b_{1},\dots,-b_{p})$ is nonempty.
\end{prop}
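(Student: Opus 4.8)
The plan is to build the surface by surgery in three moves: first realize a genus-zero surface carrying exactly the prescribed poles of higher order together with one conical singularity of large order; then add $g$ handles; and finally distribute that conical singularity among the orders $a_{1},\dots,a_{n}$. For the first move I would note that every stratum $\mathcal{H}^{2}(a_{1},\dots,a_{n},-b_{1},\dots,-b_{p})$ with $\sum a_{i}-\sum b_{j}=-4$ is nonempty: on $\mathbb{P}^{1}$ choose pairwise distinct $p_{1},\dots,p_{n},q_{1},\dots,q_{p}\in\mathbb{C}$ and set $\phi=\frac{\prod_{i}(z-p_{i})^{a_{i}}}{\prod_{j}(z-q_{j})^{b_{j}}}\,dz^{2}$; the hypothesis $\sum a_{i}-\sum b_{j}=-4$ makes $\phi$ holomorphic and nonvanishing at $\infty$, so $(\mathbb{P}^{1},\phi)$ lies in the stratum. (One could instead glue flat cones and semi-infinite cylinders along slits.)

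Now assume $g\ge 1$ and set $c:=\sum_{j}b_{j}-4$, so that $\sum_{i}a_{i}=c+4g$. By the previous paragraph I can choose a genus-zero surface $X_{0}$ with poles exactly $-b_{1},\dots,-b_{p}$ and a single conical singularity $P$ of order $c$ when $c\ge -1$ and $c\ne 0$; when $\sum_{j}b_{j}=4$ I take for $X_{0}$ the flat plane (if $p=1$) or the infinite cylinder (if $p=2$); and when $p=1,\ b_{1}=2$ I take $X_{0}\in\mathcal{H}^{2}(-1,-1,-2)$. I then apply ``adding a handle'' $g$ times. Because this surgery is local and raises the genus by one while leaving the poles unchanged, the identity $\sum a_{i}-\sum b_{j}=k(2g-2)$ forces each application to add $2k=4$ to the total order of the conical singularities; performed along a short slit issued from $P$, a handle therefore raises the order of $P$ by $4$ and creates no other conical singularity. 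In the flat-plane/infinite-cylinder case the first handle, attached along a slit between two regular points, creates a conical singularity of order $4$, at which the later handles are attached; in the case $X_{0}\in\mathcal{H}^{2}(-1,-1,-2)$ the first handle, attached along a slit joining the two simple poles, fuses them into a conical singularity of order $-1-1+4=2$, at which the remaining $g-1$ handles are attached. After $g$ handles I obtain a surface in $\mathcal{H}^{2}\bigl(\sum_{i}a_{i},-b_{1},\dots,-b_{p}\bigr)$ of genus $g$.

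It remains to split the conical singularity of order $\sum_{i}a_{i}$ into conical singularities of orders $a_{1},\dots,a_{n}$ by iterating ``breaking up conical singularities'', using the non-local variant recalled above whenever an odd order occurs. As $\sum_{i}a_{i}=\sum_{j}b_{j}+4g-4\ge 2$, at least one target order is positive, and the splittings can be ordered greedily: while the running order is at least $2$, peel off a remaining positive target order that is at most the running order minus one if such a target exists, and otherwise split off a $-1$, which only increases the running order. This keeps every intermediate order in $\{-1\}\cup\mathbb{N}^{\ast}$ and terminates with the running order equal to the last remaining target; the resulting surface lies in $\mathcal{H}^{2}(a_{1},\dots,a_{n},-b_{1},\dots,-b_{p})$.

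The heart of the argument is the middle step, where one must justify that ``adding a handle'' can be performed so as to raise the order of a prescribed conical singularity by exactly $2k$ — equivalently, that a handle attached along a slit between conical singularities of orders $a$ and $a'$ fuses them into one of order $a+a'+2k$. The increment $2k$ is forced by Gauss--Bonnet, so the real content is the local geometry near the slit. One also has to dispose by hand of the exceptional seeds ($\sum_{j}b_{j}=4$, and $p=1,\ b_{1}=2$), where no genus-zero model with the prescribed poles and a single conical singularity exists; the rest is routine bookkeeping.
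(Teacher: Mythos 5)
Your proposal is correct and follows essentially the same route as the paper: explicit rational quadratic differentials settle genus zero, and higher genus is reached by adding handles to a low-complexity seed (flat plane, infinite cylinder, or a genus-zero surface with one conical singularity) and then breaking up the unique conical singularity into the prescribed orders; organizing the cases by $c=\sum b_j-4$ and obtaining the seed for $\mathcal{H}^{2}(2,-2)$ from $\mathcal{H}^{2}(-1,-1,-2)$ by a handle instead of the paper's explicit example are only cosmetic variations. The key fact you flag as the heart of the matter — that a handle glued along a slit merges its two endpoints into one singularity of order $a+a'+2k$ — is precisely what the paper also uses implicitly (e.g.\ to pass from a zero of order $a-4$ to $\mathcal{H}^{2}(a,-a)$), so your argument is correct to the same standard of rigor as the paper's.
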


\begin{proof}
Set $\lambda \in \mathbb{C}^{\star}$ and $z_{1},\dots,z_{n},w_{1},\dots,w_{p}$ distinct elements of $\mathbb{C}$. Quadratic differentials of the form $\lambda \prod \limits_{i=1}^n (z-z_{i})^{a_{i}}\prod \limits_{j=1}^p (z-w_{j})^{-b_{j}} dz^{2}$ provide examples in every stratum of genus zero.\newline

Then, we consider the case of strata of genus $g$ with exactly two poles of order two and no poles of higher order. First, we can construct surfaces of strata $\mathcal{H}^{2}(a,-2^{2})$ by adding handles on an infinite cylinder. Then, we break up the unique conical singularity to get surfaces in any stratum $\mathcal{H}^{2}(a_{1},\dots,a_n,-2^{2})$ of genus $g\geq1$.\newline

Next, we consider the case of strata with at least two poles of higher order and such that $\sum \limits_{j=1}^p b_j \geq 5$. As we settled the case $g=0$ when $n+p\geq3$, strata $\mathcal{H}^{2}(-4+\sum \limits_{j=1}^p b_j,-b_{1},\dots,-b_{p})$ is nonempty. Then adding handles and breaking up the unique conical singularity provides surfaces for every stratum with $g\geq1$ and $p\geq2$ (other than those concerned by the previous case).\newline

Finally, we consider the case $p=1$ and $g\geq1$. We first look at strata $\mathcal{H}^{2}(a,-a)$ with $a\geq2$. Figure 2 provides an example of surface in $\mathcal{H}^{2}(2,-2)$. Surfaces in strata $\mathcal{H}^{2}(a,-a)$ with $a\geq3$ are constructed by adding an handle in a flat cone with a pole of order $a$ and a zero of order $a-4$. The case $a=4$ corresponds to the flat plane. Therefore, strata $\mathcal{H}^{2}(a,-a)$ with $a\geq2$ are nonempty. Adding handles and breaking up the unique conical singularity provides surfaces for every stratum with $p=1$ and $g\geq1$. This ends the proof.\newline

\begin{figure}
\includegraphics[scale=0.3]{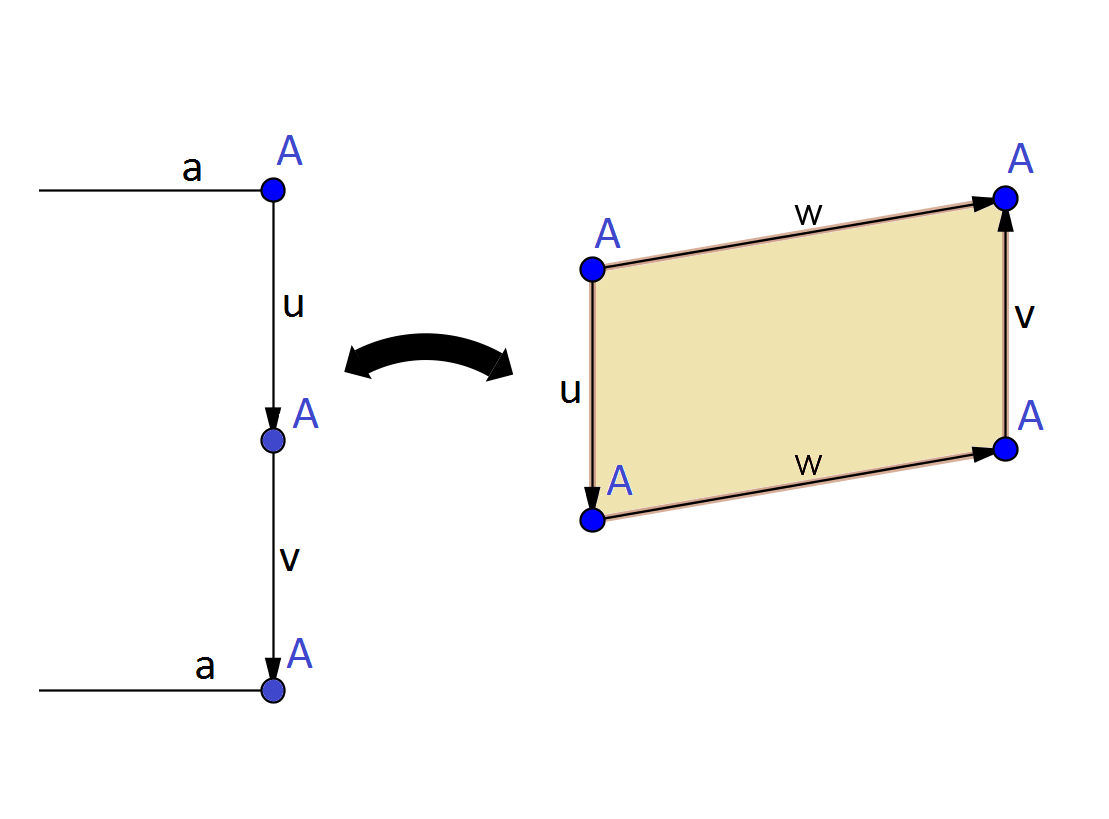}
\caption{A flat surface of $\mathcal{H}^{2}(2,-2)$.}
\end{figure}

\end{proof}

Since the study of $k$-differentials for general $k$ started recently, there is not yet a general characterization of nonempty strata in the case $k\geq3$ and $g\geq1$. 
In the genus zero case, every $k$-differential is of the form $f(z)dz^{k}$ where $f$ is a meromorphic function on the Riemann sphere so any singularity pattern can be represented by a $k$-differential. Therefore, when the genus is zero, there is no empty stratum.\newline

On each stratum of $1$-forms or quadratic differentials, $GL^{+}(2,\mathbb{R})$ acts by composition with coordinate functions \cite{Zo}. As translation surfaces with poles have infinite area, we cannot normalize the area of the surface and thus must consider the full action of $GL^{+}(2,\mathbb{R})$. Since the action of $GL^{+}(2,\mathbb{R})$ does not preserve angles, there is no such action of $GL^{+}(2,\mathbb{R})$ on strata of $k$-differentials when $k\geq3$. Indeed, directions are defined up to a rotation of angle $\dfrac{2\pi}{k}$.\newline

\subsection{Canonical k-cover}

Every $k$-differential (outside the singularities) is locally the $k^{th}$ power of a $1$-form. The canonical $k$-cover assigns to every pair $(X,\phi)$ of a Riemann surface and a $k$-differential a flat surface such that the differential is globally the $k^{th}$ power of a $1$-form, see \cite{BCGGM1}. The cover is branched only over the set of singularities of the differential. The canonical $k$-cover of $(X,\phi)$ is connected if and only if $\phi$ is a \textit{primitive} differential form. A \textit{primitive} differential form is a differential form that is not a power of a differential form of lower order. We denote by $(\widetilde{X},\omega)$ the canonical $k$-cover of $(X,\phi)$.\newline

Differentials that are the global $k^{th}$ power of a $1$-form belong to strata $\mathcal{H}^{k}(kc_{1},\dots,kc_{s})$ where the order of all singularities is divisible by $k$. Their $k^{th}$ root belong to strata $\mathcal{H}^{1}(c_{1},\dots,c_{s})$.\newline

The canonical $k$-cover $(\widetilde{X},\omega)$ of $(X,\phi)$ is cyclic of order $k$ so there is a generator $\tau$ of the cyclic deck group of the cover. Generator $\tau$ induces another periodic automorphism $\tau^{\ast}$ on relative homology group $H_{1}(X\setminus\Delta,\Lambda)$ where $\Delta$ is the set of poles of higher order and $\Lambda$ is the set of conical singularities.\newline
Thus $H_{1}(X\setminus\Delta,\Lambda)$ decomposes into invariant subspaces $H_{\xi_{i}}$ where the $\xi_{i}$ are the $k^{th}$ roots of the unity. $\phi$ belongs to an invariant subspace $H_{\xi_{1}}$ where where $\xi_{1}$ is a primitive root. Strata are complex-analytic orbifolds with local coordinates given by the period map of $H_{\xi_{1}}$ \cite{Bo,BS}.\newline 

\subsection{Saddle connections}

When differentials have poles of higher order, the corresponding surfaces may have a finite number of saddle connections.

\begin{defn} A saddle connection is a geodesic segment joining two conical singularities of the flat surface such that all interior points are not conical singularities.
\end{defn}

It must be noted that when $k\geq3$, saddle connections may be self-intersecting, see Figure 3.\newline

\begin{figure}
\includegraphics[scale=0.3]{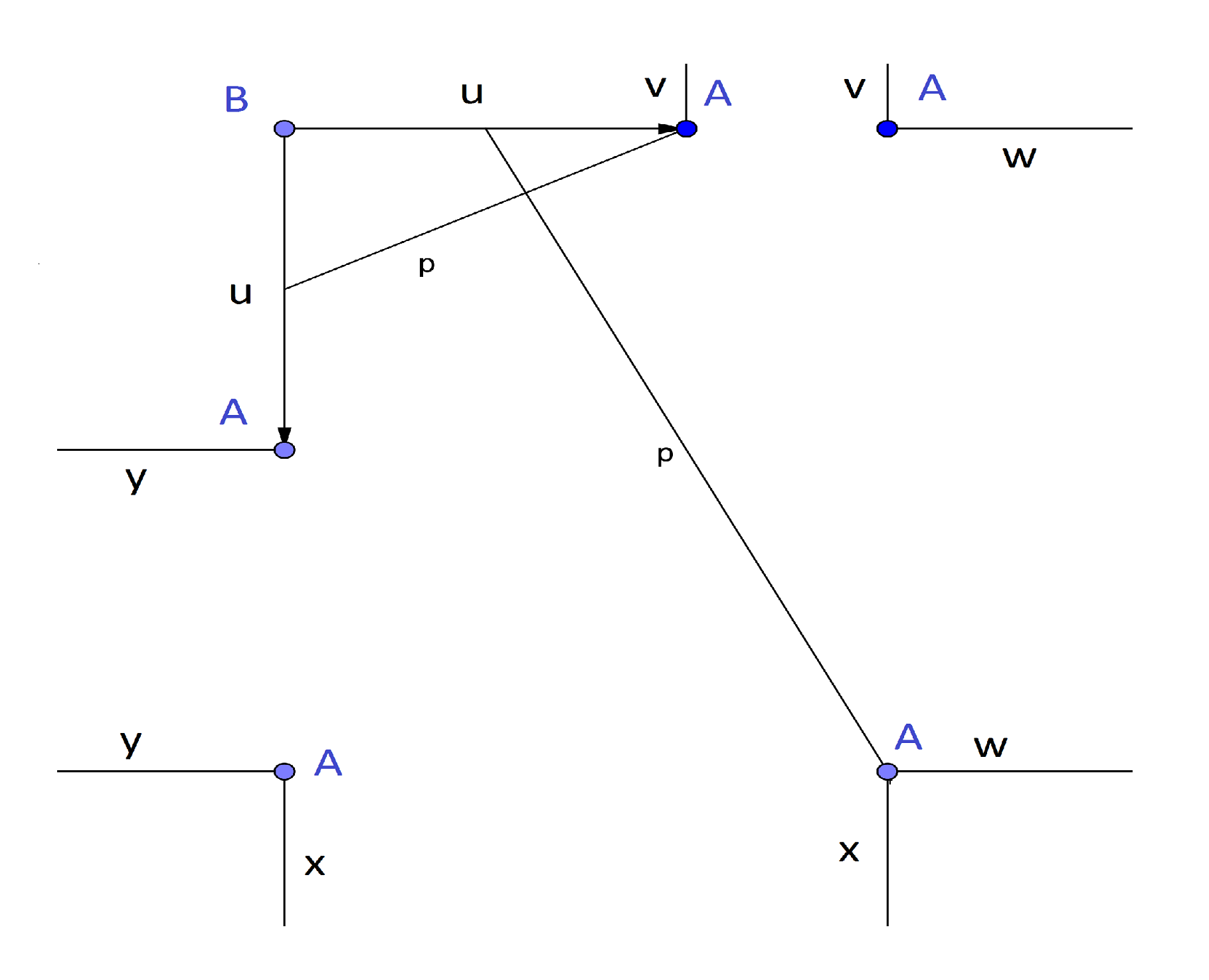}
\caption{A flat surface of $\mathcal{H}^{4}(11,-3,-4^{4})$ with a self-intersecting saddle connection. The zero of order $11$ corresponds to $A$ while the pole of order $3$ (conical singulariy whose total angle is $\dfrac{\pi}{2}$) corresponds to $B$. Saddle connection $p$ connects $A$ with itself.}
\end{figure}

Provided $k=1\ or\ 2$, the number of saddle connections of a flat surface with poles of higher order is $GL^{+}(2,\mathbb{R})-invariant$.

\begin{prop} The number $|SC|$ of saddle connections of a flat surface with poles of higher order $(X,\phi)$ changes lower semicontinuously in the moduli space (the number can only decrease as we pass to the limit).
\end{prop}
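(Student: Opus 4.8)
The plan is to show that $|SC|$ cannot increase in the limit, i.e. that for any convergent family $(X_t,\phi_t) \to (X_0,\phi_0)$ in the moduli space, we have $|SC|(X_0,\phi_0) \leq \liminf_{t\to 0} |SC|(X_t,\phi_t)$. The key idea is that every saddle connection on the limit surface $(X_0,\phi_0)$ persists (is the limit of saddle connections on nearby surfaces), so the limiting count is at least the count on $X_0$. First I would fix the limit surface $(X_0,\phi_0)$ and a saddle connection $\gamma$ on it, joining conical singularities $P$ and $Q$. Using the period-coordinate description of the stratum recalled above (local coordinates given by the period map on the invariant subspace $H_{\xi_1}$ of $H_1(X\setminus\Delta,\Lambda)$), I would note that $\gamma$ defines a relative homology class, hence a holonomy vector $\mathrm{hol}(\gamma)$ that varies continuously; for $t$ small there is a corresponding relative cycle $\gamma_t$ on $(X_t,\phi_t)$ whose holonomy is close to $\mathrm{hol}(\gamma)$. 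The substantive point is to promote this homological persistence to a genuine geometric saddle connection: straightening $\gamma_t$ within its homotopy class rel endpoints in the prescribed direction, one must check the straightening does not run into a pole of higher order or fail to converge. This is where I would invoke the local models from Section 3: near a pole of order $k$ the neighborhood is a semi-infinite cylinder and near a pole of order $b>k$ it is (a truncation of) an infinite cone, and Lemma 3.1 gives a basis of neighborhoods with convex complement. Since $\gamma$ stays in the convex complement of a fixed small neighborhood $U_0$ of each pole on $X_0$, for $t$ small $\gamma_t$ stays in the convex complement of a comparable neighborhood on $X_t$, and within that compact convex piece the geodesic representative exists and is unique; thus $\gamma_t$ is an honest saddle connection.

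Next I would argue that distinct saddle connections $\gamma^{(1)},\dots,\gamma^{(m)}$ on $X_0$ (for any finite $m \leq |SC|(X_0,\phi_0)$) give rise, for $t$ small enough, to $m$ distinct saddle connections on $X_t$: their holonomy vectors are distinct on $X_0$, or if two happen to have equal holonomy they are distinguished by their relative homology classes or by the combinatorics of which singularities and which homotopy classes they represent, and all of these are preserved under the identification for small $t$. A short continuity/compactness argument (choosing a single $\varepsilon$ that works simultaneously for the finitely many $\gamma^{(i)}$) then yields $|SC|(X_t,\phi_t) \geq m$ for all sufficiently small $t$. Letting $m \to |SC|(X_0,\phi_0)$ (interpreting this as $m \to \infty$ if the limit surface has infinitely many saddle connections) gives $\liminf_{t\to 0}|SC|(X_t,\phi_t) \geq |SC|(X_0,\phi_0)$, which is precisely lower semicontinuity.

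I expect the main obstacle to be the persistence step near the poles of higher order: one must be sure that a saddle connection of the limit surface does not get "pushed to infinity" or absorbed into a pole under a small perturbation of the differential, and conversely that the straightening procedure on $X_t$ genuinely terminates at the two conical singularities rather than escaping to a pole. The tool to control this is exactly the convex-complement neighborhood basis of Lemma 3.1 together with the explicit local forms $\tfrac{r}{z^k}(dz)^k$ and $(z^{b/k}+\tfrac{s}{z})^k(dz)^k$: a saddle connection lives in the compact core-type region bounded away from $\Delta$, and this region deforms continuously, carrying geodesics to geodesics. One should also be slightly careful when $k=2$ and the breaking-up surgery is non-local, but for the statement at hand we only need continuity of the flat structure in period coordinates, which is standard. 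A secondary, purely bookkeeping, subtlety is the case $|SC|(X_0,\phi_0)=+\infty$, handled simply by letting $m\to\infty$ as above; no uniformity over all of $SC$ is needed, only over arbitrarily large finite subsets.
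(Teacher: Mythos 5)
Your proposal is correct and follows essentially the same route as the paper: each saddle connection of the limit surface persists on all nearby surfaces in the stratum (the only way to destroy it being a conical singularity entering its interior), and applying this simultaneously to arbitrarily large finite sets of saddle connections gives lower semicontinuity. The paper's proof is simply a condensed version of your persistence-plus-finite-subsets argument, with the persistence step asserted in one sentence rather than spelled out via period coordinates and the convex-complement neighborhoods of the poles.
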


\begin{proof} A saddle connection can only be broken if a conical singularity appears in the interior of the geodesic segment over the deformation. So for every saddle connection $\gamma$ there is a neighborhood $U_{\gamma}$ of the surface in the stratum for which the saddle connection $\gamma$ still persists. Let $A$ be a finite set of saddle connections of $X$, then every surface in $\bigcap \limits_{\gamma \in A} U_{\gamma}$ has a number of saddle connections higher than $|A|$.
Therefore one cannot approach a surface by a family of surfaces whose number of saddle connections would not approach it.\newline
\end{proof}

Graphs of noncrossing saddle connections (see maximal geodesic arc systems in Subsection 4.2) provide good combinatorial decompositions of flat surfaces. For this purpose, we need to introduce the concept of \textit{ideal triangle}.

\begin{defn} An \textit{ideal polygon} is a finite area contractible domain of regular points bounded by a finite number of distinct saddle connections whose vertices are not necessarily distinct. An \textit{ideal triangle} is an ideal polygon with three sides.\newline
\end{defn}

We can associate homology classes to saddle connections. If $k\geq2$, the way to do so is quite subtle. We consider a flat surface with poles $(X,\phi)$ and its canonical $k$-cover $(\widetilde{X},\omega)=\pi^{-1}(X,q)$.

$H_{1}(\widetilde{X}\setminus\Delta',\Lambda')$ is the first relative homology group of $(\widetilde{X},\omega)$ where $\Delta'$ and $\Lambda'$ respectively are the preimages by $\pi$ of the conical singularities and poles of higher order associated to $\phi$. A cyclic automorphism $\tau$ associated to the covering acts on $H_{1}(\widetilde{X}\setminus\Delta',\Lambda')$.

We denote by $\gamma_{1},\dots,\gamma_{k}$ the $k$ preimages of an oriented saddle connection $\gamma$. Let $\xi$ be a $k^{th}$ roots of the unity. There is a sequence of integers $l_{1},\dots,l_{k}$ such that $\sum_{i=1}^{k} \xi^{l_{i}}[\gamma_{i}]$ is invariant by the action of $\tau$. We define $[\gamma]=\sum_{i=1}^{k} \xi^{l_{i}}[\gamma_{i}]$.\newline
Two saddle connections are said to be parallel when their relative homology classes are linearly dependant over $\mathbb{R}$.\newline
The holonomy vector of a saddle connection is essentially determined by the period of its relative homology class. Its direction modulo $\dfrac{2\pi}{k}$ is the argument of the period and its length is $\dfrac{1}{k}$ of the modulus of the period.\newline

The following lemma is proved as Theorem 1 in \cite{MZ}. Using the canonical $k$-cover, its proof generalizes without any difficulty in the case of meromorphic $k$-differentials with poles of arbitrary order. Parallel saddle connections occur in the definition of the discriminant of a stratum. Lemma 3.6 is used in particular in Proposition 7.7 for the computation of a lower bound on the number of saddle connections in flat surfaces that lie outside the discriminant.

\begin{lem}
Let $(X,\phi)$ be a flat surface with (or without) poles of higher order. Two saddle connections $\gamma_{1}$ and $\gamma_{2}$ are parallel if and only if they have no interior intersections and one of the bounded connected components of $X \setminus (\gamma_{1}\cup\gamma_{2})$ has trivial linear holonomy.\newline
\end{lem}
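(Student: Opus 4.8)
The plan is to reduce the statement to the classical case of abelian differentials treated in \cite{MZ} via the canonical $k$-cover, and then prove the abelian case directly. So first I would pass to the canonical $k$-cover $(\widetilde{X},\omega)$ of $(X,\phi)$: each saddle connection $\gamma_i$ on $X$ lifts to $k$ copies $\widetilde{\gamma}_i^{(1)},\dots,\widetilde{\gamma}_i^{(k)}$, and the parallelism of $\gamma_1,\gamma_2$ in the sense of Definition 3.5 (linear dependence of relative homology classes over $\mathbb{R}$) translates, by the construction of $[\gamma]$ as a $\tau$-invariant combination $\sum \xi^{l_i}[\widetilde{\gamma}_i]$ whose period is $k$ times the holonomy, into parallelism of the lifts in $(\widetilde{X},\omega)$ in the ordinary sense. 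Likewise, interior intersections of $\gamma_1$ and $\gamma_2$ on $X$ correspond to interior intersections of their lifts upstairs, and a bounded complementary component of $X\setminus(\gamma_1\cup\gamma_2)$ with trivial linear holonomy lifts to $k$ bounded components of $\widetilde{X}\setminus(\widetilde{\gamma}_1\cup\widetilde{\gamma}_2)$ (the deck group permutes them), on which $\omega$ restricts to an honest translation structure; conversely, $\tau$-invariant unions of such components push down.

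For the abelian case itself the argument is the one in \cite{MZ}, which I would reproduce in outline. For the ``if'' direction: suppose $\widetilde{\gamma}_1$ and $\widetilde{\gamma}_2$ are disjoint and bound a region $R$ that is a flat surface with boundary and trivial holonomy; orienting $\partial R$ coherently, the holonomy vector of $\partial R$ is zero (it is the integral of $\omega$ around a null-homotopic-in-$R$-and-hence-exact cycle), and $\partial R$ is made up of $\widetilde{\gamma}_1$, $\widetilde{\gamma}_2$, and possibly arcs along the singular set where the contribution telescopes, forcing $\mathrm{hol}(\widetilde{\gamma}_1)$ and $\mathrm{hol}(\widetilde{\gamma}_2)$ to be parallel (and in fact $[\widetilde{\gamma}_1]=\pm[\widetilde{\gamma}_2]$ as relative homology classes). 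Care is needed because the boundary of $R$ may traverse a saddle connection more than once or pass through a singularity with a large cone angle; one handles this by writing $[\partial R]$ as an integer combination of $[\widetilde{\gamma}_1]$, $[\widetilde{\gamma}_2]$ in $H_1(\widetilde{X}\setminus\Delta',\Lambda')$ and applying the period map. For the ``only if'' direction: if $\mathrm{hol}(\widetilde{\gamma}_1)$ and $\mathrm{hol}(\widetilde{\gamma}_2)$ are parallel, take the two geodesics in the same direction; two geodesics in a fixed direction on a translation surface cannot cross transversally (the vertical, say, foliation is orientable and geodesics of a fixed slope are leaves or leaf segments), so they are disjoint; then a minimal strip or rectangle swept between them (or, when they are not cobounding a strip, the region cut off on one side) is a flat cylinder or a more degenerate flat region, which has trivial holonomy because it embeds in a single direction's foliated chart. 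One must also dispatch the degenerate subcases where $\widetilde{\gamma}_1=\widetilde{\gamma}_2$ or where they share an endpoint.

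I would then assemble the two halves and transfer back down: a pair $\gamma_1,\gamma_2$ on $X$ is parallel iff some (equivalently every) pair of lifts is parallel upstairs iff the lifts are disjoint and cobound a trivial-holonomy region upstairs iff $\gamma_1,\gamma_2$ are disjoint on $X$ and some bounded component of $X\setminus(\gamma_1\cup\gamma_2)$ has trivial linear holonomy — the last equivalence using that trivial linear holonomy of a component on $X$ is exactly the condition that the corresponding components upstairs are genuine translation surfaces and are permuted trivially, or more precisely that the holonomy representation of that component factors through the trivial subgroup of the $k$-th roots of unity.

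The main obstacle I expect is not the topology of the lift but the bookkeeping in the ``if'' direction when $\partial R$ is complicated: making the claim ``$[\partial R]$ is an integer combination of $[\widetilde{\gamma}_1]$ and $[\widetilde{\gamma}_2]$, hence its period vanishes, hence the two holonomies are proportional'' fully rigorous requires being careful that the relative homology class of the boundary of a bounded flat region with trivial holonomy really is null in the right group, and that poles of higher order (which are punctured, not filled) do not sneak into $R$ — but they cannot, since $R$ has finite area while every neighborhood of a pole of higher order has infinite area, and this is exactly why the statement survives the presence of higher-order poles. The rest is the verbatim adaptation of \cite[Theorem 1]{MZ}.
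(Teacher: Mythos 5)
Your proposal takes essentially the same route as the paper: the paper's proof consists of observing that the statement is Theorem 1 of \cite{MZ} and that, via the canonical $k$-cover, the argument generalizes without difficulty to meromorphic $k$-differentials with poles of arbitrary order, which is exactly your reduction (including the pertinent remark that a higher-order pole cannot lie in the bounded trivial-holonomy component, since that component has finite area). Your sketch of the Masur--Zorich argument itself is extra detail the paper does not record, but the approach is the same.
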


\subsection{Index of a loop}

The topological index of a loop is particularly easy to handle in flat geometry.\newline

\begin{defn}
Let $\gamma$ be a simple closed curve in a flat surface with (or without) poles of higher order. $\gamma$ is parametrized by arc-length and passes only through regular points.\newline
We consider the lifting $\eta$ of $\gamma$ by the canonical covering. Then $\eta'(t)=e^{i\theta(t)}$.\newline
We have $\dfrac{1}{2\pi}\int_{0}^{T} \theta'(t)dt \in \dfrac{1}{k}\mathbb{Z}$. This number is the topological index $ind(\gamma)$ of $\gamma$. This number may be fractional because the holonomy of the flat surface may be nontrivial.\newline
\end{defn}

In particular, the topological index of a counterclockwise simple loop around a singularity of order $m$ is $m+1$.\newline

\section{Core and related concepts}

\subsection{Core of a flat surface with poles of higher order}

The core of a flat surface with poles was introduced in \cite{HKK}. It relies on a notion of convexity that is natural in this context but whose consequences may seem counterintuitive.

\begin{defn} A subset $E$ of a flat surface with poles of higher order is \textit{convex} if and only if every element of any geodesic segment between two points of $E$ belongs to $E$.\newline
The convex hull of a subset $F$ of a flat surface with poles of higher order $X$ is the smallest closed convex subset of $X$ containing $F$.\newline
The core of a flat surface with poles of higher order $X$ is the convex hull $core(X)$ of the conical singularities $\Lambda$ of the $k$-differential.\newline
$\mathcal{I}\mathcal{C}(X)$ is the interior of $core(X)$ in $X$ and $\partial\mathcal{C}(X) = core(X)\ \backslash\ \mathcal{I}\mathcal{C}(X)$ is its boundary.\newline
The \textit{core} is said to be degenerate when $\mathcal{I}\mathcal{C}(X)=\emptyset$ that is when $core(X)$ is just graph $\partial\mathcal{C}(X)$.
\end{defn}

We give some examples to illustrate the situations that can occur.

\begin{ex}
For most $X \in \mathcal{H}^{1}(1,-1^{3})$, $\mathcal{I}\mathcal{C}(X) \neq \emptyset$, see Figure 4. We note that according to this definition of convexity, a singleton may be non convex.

\begin{figure}
\includegraphics[scale=0.3]{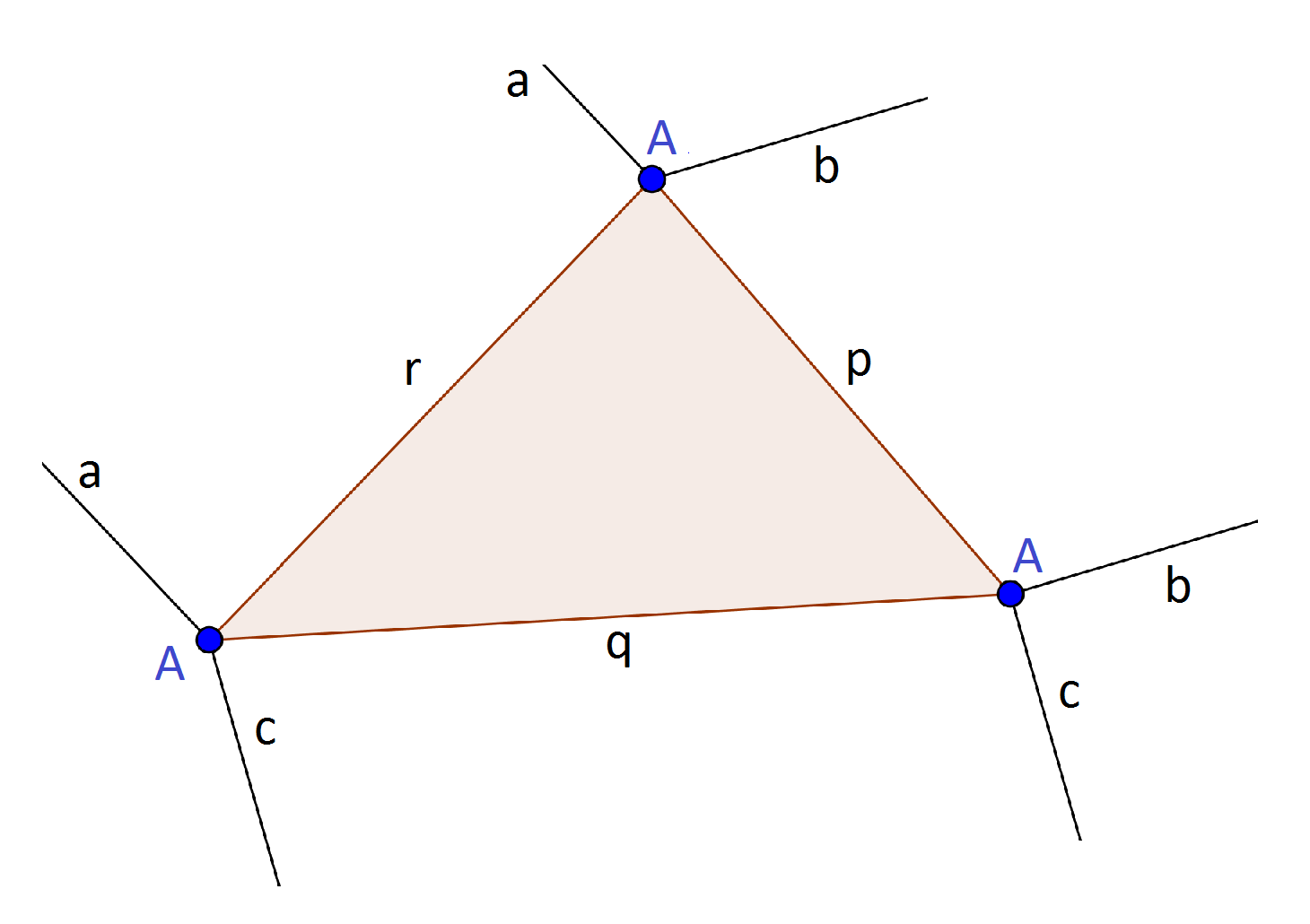}
\caption{The core of the surface $X \in \mathcal{H}^{1}(1,-1^{3})$ as in the picture is an ideal triangle.}
\end{figure}
\end{ex}

\begin{ex}
For some $X \in \mathcal{H}^{1}(2,-2)$, $core(X)$ is degenerate, see Figure 5.

\begin{figure}
\includegraphics[scale=0.3]{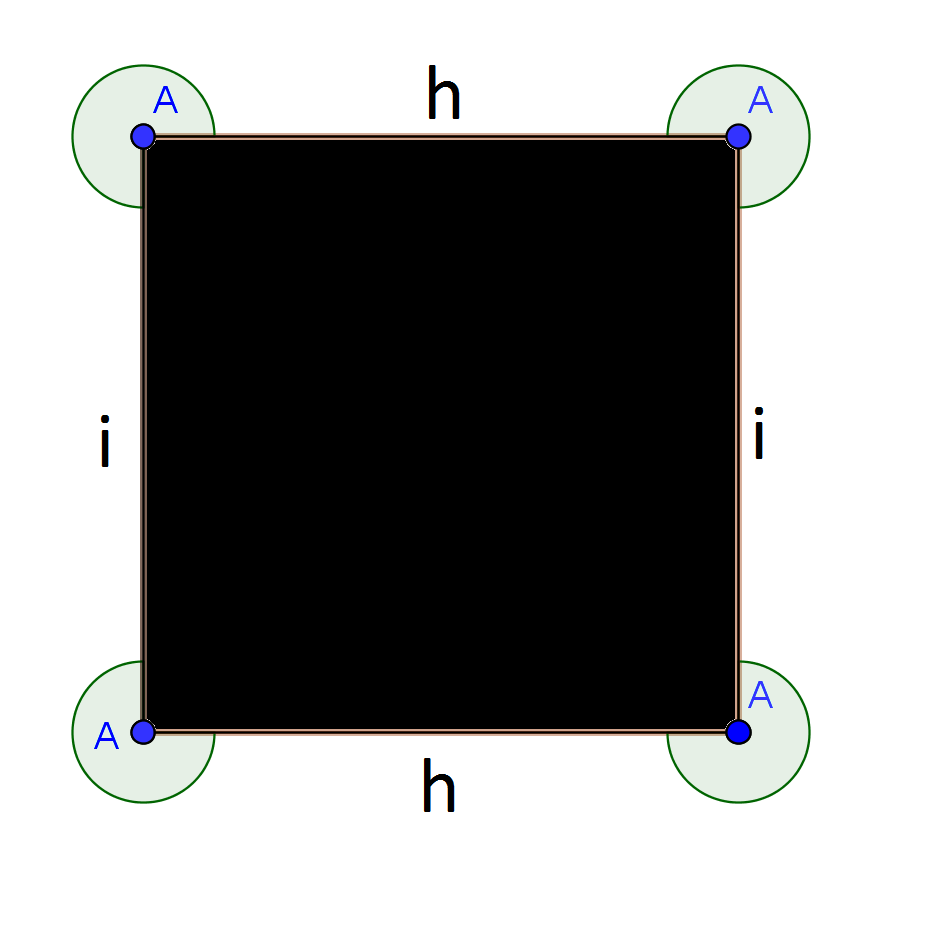}
\caption{The core of this surface from $\mathcal{H}^{1}(2,-2)$ obtained from a plane with a rectangular hole is a bouquet of two circles.}
\end{figure}

\end{ex}

\begin{prop} For any flat surface with poles of higher order $(X,\phi)$, $\partial\mathcal{C}(X)$ is a finite union of saddle connections.
\end{prop}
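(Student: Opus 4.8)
The plan is to show that the boundary $\partial\mathcal{C}(X)$ is locally modelled, at each of its points, on finitely many geodesic pieces emanating from conical singularities, and then to use the fact that saddle connections form a discrete set (together with the local structure near poles of higher order) to conclude finiteness. First I would observe that $core(X)$ is a closed convex set containing all of $\Lambda$, and that by Lemma 3.1 every pole of higher order admits a basis of neighborhoods whose complement is convex; since $core(X)$ is the \emph{smallest} closed convex set containing $\Lambda$, it must be disjoint from a neighborhood of each pole, so $core(X)$ avoids a neighborhood of $\Delta$ and in particular $\partial\mathcal{C}(X)$ is a closed subset of the compact set $X\setminus(\text{small nbhds of }\Delta)$, hence compact.

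Next I would analyze the local structure of $\partial\mathcal{C}(X)$ at a regular point $x$. Because $core(X)$ is convex and $x$ lies on its boundary, locally near $x$ the set $core(X)$ looks like a convex subset of the Euclidean plane whose boundary through $x$, being a boundary of a convex flat region with no singularity of positive angle excess inside, must be a straight geodesic segment near $x$ — any corner of the boundary pointing \emph{into} the complement would contradict convexity of $core(X)$ (one could cut the corner off and stay inside), and any corner pointing \emph{out} would have to be a conical singularity of angle less than $2\pi$, i.e. a pole of order $<k$, contradicting $x$ being regular. So through each regular boundary point the boundary is a geodesic; a maximal such geodesic arc in $\partial\mathcal{C}(X)$ cannot be extended indefinitely (it would eventually have to enter a neighborhood of a pole, which is excluded, or return, in which case we stop), so it terminates at conical singularities at both ends, i.e. it is a saddle connection. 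Thus $\partial\mathcal{C}(X)$ is a union of saddle connections, each of which is either entirely contained in $\partial\mathcal{C}(X)$ or meets it only in a sub-segment; a short convexity argument upgrades "sub-segment" to "entire saddle connection" since the geodesic extension of a boundary segment past a regular point stays on the boundary.

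Finally I would argue finiteness. Saddle connections on $(X,\phi)$ of bounded length form a discrete set in the moduli of directions (a standard consequence of the local models: in any compact region away from $\Delta$, there are only finitely many saddle connections of length at most $L$), so it suffices to bound the length of the saddle connections appearing in $\partial\mathcal{C}(X)$. But $\partial\mathcal{C}(X)$ is compact, so it is covered by finitely many flat charts, and within each chart only finitely many boundary geodesic segments can appear; piecing these together, $\partial\mathcal{C}(X)$ is a finite union of geodesic arcs between conical singularities, i.e. a finite union of saddle connections. The main obstacle I expect is the local structure step: one must argue carefully, using the definition of convexity in this (non-standard) setting where even singletons can fail to be convex, that a regular boundary point cannot be a "reflex" point of the boundary, and that the boundary cannot spiral or accumulate on itself — this is where the hypothesis that $core(X)$ stays uniformly away from the poles of higher order (Lemma 3.1) does the essential work.
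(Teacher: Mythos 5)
Your proposal is correct and follows essentially the same route as the paper: compactness of $core(X)$ via Lemma 3.1, local straightness of $\partial\mathcal{C}(X)$ at regular points from convexity/minimality, and finiteness from compactness together with the fact that saddle connections cannot accumulate (lengths bounded below). The only cosmetic difference is that you exclude corners at regular boundary points by a cut-the-corner and chord argument, whereas the paper obtains local straightness by viewing regular boundary points as limits of geodesic segments contained in the core; both are the same convexity-based local analysis, and your level of detail matches (indeed slightly exceeds) the paper's.
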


\begin{proof} Every pole has a neighborhood whose complement is convex (Lemma 3.1). So, $core(X)$ belongs to a compact convex subset of $X$. By hypothesis, a regular point of $\partial\mathcal{C}(X)$ is in the closure of a sequence of geodesic segments that belong to $core(X)$. Therefore, $\partial\mathcal{C}(X)$ is locally isometric to a straight line around regular points. Since the core is compact and the lengths of saddle connections are bounded below by a positive number, $\partial\mathcal{C}(X)$ is a finite union of saddle connections.
\end{proof}

Lemma 4.5 shows that the complement of the core has as many connected components as there are poles of higher order. We refer to these connected components as \textit{domains of poles}.

\begin{lem} Let $X$ be a flat surface with $p$ poles of higher order, then $X \setminus core(X)$ has $p$ connected components. Each of them is a topological disk that contains a unique pole of higher order.
\end{lem}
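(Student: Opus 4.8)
The plan is to establish three things, from which the lemma follows at once: (1) every pole of higher order lies in $X\setminus core(X)$, and in fact has a topological-disk neighborhood disjoint from $core(X)$; (2) $X\setminus core(X)$ has only finitely many connected components, each with boundary contained in $\partial\mathcal{C}(X)$; (3) these components are in bijection with the poles, each one a disk containing a single pole. The only inputs are Lemma 3.1 (poles have a basis of neighborhoods with convex complement), Proposition 4.4 ($\partial\mathcal{C}(X)$ is a finite union of saddle connections), and the fact that $core(X)$ is by definition the \emph{smallest} closed convex set containing $\Lambda$.

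\emph{Poles and their domains.} Fix a pole $P\in\Delta$. By Lemma 3.1 there is a neighborhood basis $(V_n)$ of $P$ with $X\setminus V_n$ convex; for $n$ large we may assume $V_n\cap\Lambda=\emptyset$, so $X\setminus V_n$ is a closed convex set containing $\Lambda$, hence containing $core(X)$, whence $V_n\cap core(X)=\emptyset$. Let $W_P$ be the union of all neighborhoods of $P$ with $\Lambda$-free convex complement; then $X\setminus W_P=\bigcap(X\setminus V)$ is again closed and convex and contains $\Lambda$, so $W_P$ is a connected open neighborhood of $P$ disjoint from $core(X)$, maximal among neighborhoods of $P$ with convex complement. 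From the explicit local models of Section 3.2 one may already take $W_P$ so that $W_P\cup\{P\}$ is an embedded open disk, and it is simply connected because any loop in $W_P$ bounds, on the side not meeting the convex set $X\setminus W_P$, a disk contained in $W_P$. For the finiteness in (2): if $U$ is a connected component of the open set $X\setminus core(X)$, a point of $\partial U$ cannot lie in the open set $\mathcal{I}\mathcal{C}(X)$ nor in $X\setminus core(X)$, so $\partial U\subseteq\partial\mathcal{C}(X)$, and $\partial U\neq\emptyset$ since $X$ is connected and $core(X)\neq\emptyset$; since $\partial\mathcal{C}(X)$ is a finite graph of saddle connections, a regular neighborhood of it inside $X\setminus\mathcal{I}\mathcal{C}(X)$ has finitely many components and meets every $U$, so there are finitely many components, each bounded by saddle connections whose endpoints are conical singularities.

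\emph{Identification with pole domains.} First, no component $U$ can have compact closure in $X$, i.e. contain no pole. Otherwise $\overline U$ is a compact flat surface whose boundary is a system of saddle connections, with conical singularities only at the vertices of $\partial U$ (all of $\Lambda$ lies in $core(X)$, disjoint from $U$); each edge of $\partial U$ is a geodesic segment between points of $\Lambda$, hence lies in $core(X)$, and one wants to conclude $\overline U\subseteq core(X)$, contradicting $U\cap core(X)=\emptyset$. For this one uses that convexity of $core(X)$ forces, at each conical singularity $v\in\partial\mathcal{C}(X)$, every sector of $core(X)$ at $v$ to have angle at most $\pi$, and then decomposes $\overline U$ by a flat triangulation with vertices among the $\Lambda$-points of $\partial U$: each triangle is a convex flat triangle with $\Lambda$-vertices and $\Lambda$-joining edges, hence lies in the convex set $core(X)$. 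So each component contains at least one pole, hence there are at most $p$ of them. Finally, if a component $U$ contained two distinct poles $P\neq P'$, then $W_P$ would be a $\Lambda$-free neighborhood of $P'$ with convex complement, so $W_P\subseteq W_{P'}$ by maximality, and symmetrically $W_{P'}\subseteq W_P$, giving $W_P=W_{P'}$; this common maximal neighborhood has boundary inside $\partial\mathcal{C}(X)$ by the argument of Proposition 4.4 (a regular boundary point through which no geodesic segment of the convex complement passes could be absorbed, contradicting maximality; the rest of the boundary is in $\partial\mathcal{C}(X)$), so $W_P=W_{P'}$ is a single component, forcing the remaining $p-1$ poles into at most $p-2$ further components — contradicting that each contains a pole. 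Hence the $p$ poles lie in $p$ distinct components; with the two previous facts, $X\setminus core(X)$ has exactly $p$ components, each of the form $W_P$, a topological disk containing the single pole $P$.

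\emph{Main obstacle.} The delicate step is the claim that a relatively compact complementary component is swallowed by the convex hull of $\Lambda$: a component $U$ may be a \emph{non-convex} flat polygon (a conical singularity can contribute a reflex sector to $U$), so the reduction to convex triangles — and in particular the existence of a flat triangulation of $\overline U$ by saddle connections with no interior vertices — is exactly the point requiring care. The companion fact that the maximal convex-complement neighborhood $W_P$ has geodesic boundary is a routine variant of the argument already used for Proposition 4.4.
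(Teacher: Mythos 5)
Your proposal does not yet prove the lemma; the decisive gap is in the step that is supposed to give ``exactly $p$ components, one pole in each''. You argue that if a component $U$ contained two poles $P\neq P'$, then $W_P$ ``would be a $\Lambda$-free neighborhood of $P'$ with convex complement''. But $W_P$ is only known to be \emph{some} neighborhood of $P$ contained in $U$; nothing you prove shows $P'\in W_P$ --- that would require $W_P=U$, i.e.\ that $X\setminus U$ is convex, which you never establish (it can be shown, using $\partial U\subset\partial\mathcal{C}(X)\subset core(X)$ and convexity of the core, but it is not in the proposal). Worse, even granting $W_P=W_{P'}=U$, your concluding count (``forcing the remaining $p-1$ poles into at most $p-2$ further components --- contradicting that each contains a pole'') is a non sequitur: a component containing two poles is perfectly compatible with every component containing at least one pole; it would simply mean there are fewer than $p$ components, which is exactly what has to be excluded. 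So what you actually establish is that poles lie off the core and that every component contains at least one pole (hence at most $p$ components), not the injectivity pole $\mapsto$ component. The paper gets that injectivity, and simultaneously the disk structure, from an ingredient you have no substitute for: for each pole it takes the length-minimizing representative of the free homotopy class of a small simple loop around that pole; this piecewise-geodesic loop is the boundary of the domain of that pole, exhibiting the component as a disk punctured at that single pole. Relatedly, your justification that $W_P\cup\{P\}$ is a disk (``any loop in $W_P$ bounds, on the side not meeting $X\setminus W_P$, a disk contained in $W_P$'') is not an argument: a loop in $W_P$ need not be separating, and no reason is given that one complementary side avoids the convex complement; so the ``topological disk'' half of the statement is also unproved.

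On the other half --- ruling out a pole-free component --- your route is a genuine and attractive alternative to the paper's: the boundary saddle connections lie in $core(X)$ because their endpoints are in $\Lambda$, and a geodesic triangulation of $\overline U$ with vertices on $\partial U$ would be swallowed triangle by triangle by convexity (each cell is isometric to a Euclidean triangle, and a chord through any interior point is a geodesic segment between two points of $core(X)$). But, as you yourself flag, this hinges on the existence of such a triangulation with no interior vertices, which you do not prove; the fact is true and standard (it is essentially what the paper invokes later in Lemma 4.8), so this gap is fixable, yet in the proposal it remains an acknowledged hole. For comparison, the paper disposes of pole-free components in one line: such a component would be compact, so a geodesic entering it from the core must leave it again through the core, contradicting convexity of $core(X)$. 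In sum: the compactness step is incomplete but repairable, while the uniqueness/counting step is logically broken and needs a different idea (either the paper's minimizing-loop argument, or first proving $W_P$ equals the whole component and then separately excluding two punctures in one disk).
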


\begin{proof}
We consider the homotopy class of simple loops around a pole of higher order. A length minimizing representative is the piecewise geodesic boundary of the domain of this pole. Therefore, every pole of higher order belongs to a connected component homeomorphic to a disk.\newline
A connected component without a pole of higher order would be compact so there would be trajectories passing through the core, going into that connected component of the complement of the core and getting back to the core. It is impossible because $core(X)$ is convex. So, there is exactly one pole of higher order in each connected component.\newline
\end{proof}

\subsection{Maximal geodesic arc systems}

In order to get bounds on the number of saddle connections, we examine the maximal number of noncrossing saddle connections that is more tractable. This information is included in the maximal geodesic arc system structure \cite{HKK}.

\begin{defn} A geodesic arc system on a flat surface is a collection of saddle connections that intersect at most at conical singularities. A maximal geodesic arc system (MGAS) is a geodesic arc system that cannot be completed with another saddle connection.
\end{defn}

\begin{rem}
It is possible for $k$-differentials with $k\geq3$ to exhibit saddle connections that intersect themselves, see Figure 3. Therefore, geodesic arc systems include only saddle connections without self-intersection.\newline
\end{rem}

\begin{lem} A maximal geodesic arc system on a flat surface $(X,\phi)$ defines an ideal triangulation of $core(X)$ ($2$-cells of $\mathcal{I}\mathcal{C}(X)$ are ideal triangles). Every conical singularity is a vertex of this triangulation.
\end{lem}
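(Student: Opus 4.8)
The plan is to show two things: first, that the $2$-cells of $\mathcal{IC}(X)$ cut out by a maximal geodesic arc system are ideal triangles, and second, that every conical singularity is a vertex of the arc system. I would begin by invoking Proposition 4.4, which tells us that $\partial\mathcal{C}(X)$ is already a finite union of saddle connections. Hence the saddle connections of a given MGAS $\mathcal{A}$, together with the boundary saddle connections, decompose $core(X)$ into finitely many connected open pieces, and by Lemma 4.5 the complement $X\setminus core(X)$ contributes nothing new. Each such piece $P$ is a contractible (since we may add diagonals) finite-area domain of regular points whose boundary consists of saddle connections: that is, $P$ is an ideal polygon in the sense of Definition 3.4. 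So the real content is that maximality forces each $P$ to be an ideal \emph{triangle}.

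For that step I would argue by contradiction: suppose some $2$-cell $P$ of $\mathcal{IC}(X)$ has at least four sides. Then I want to exhibit a saddle connection contained in $P$ connecting two of its vertices and disjoint (in its interior) from all arcs of $\mathcal{A}$, contradicting maximality of $\mathcal{A}$. The natural candidate is a diagonal of $P$. Since $P$ is a finite-area flat polygon with geodesic sides, I would take two vertices of $P$ that are not joined by a side and consider a length-minimizing path between them inside $\overline{P}$; because $P$ has finite area and its sides are geodesics meeting the interior only along the boundary, such a minimizer exists, is a geodesic segment, stays in $P$ except at its endpoints, and contains no conical singularity in its interior (a conical singularity of angle $\geq 2\pi$ in the interior would let us shorten the path by going around it, while a conical singularity with angle $<2\pi$ cannot be interior to a locally length-minimizing geodesic). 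Thus it is a genuine saddle connection that can be added to $\mathcal{A}$ — contradiction. One must be slightly careful when the two endpoints of the would-be diagonal coincide (vertices of an ideal polygon need not be distinct): in that case one instead produces a simple closed geodesic loop based at that vertex and homotopic rel endpoint to the appropriate boundary arc of $P$, which again yields a new admissible saddle connection; alternatively one picks a different pair of vertices, and a polygon with $\geq 4$ sides always admits such a pair. The main obstacle is exactly this case analysis around non-distinct vertices and around the possibility that a "diagonal" degenerates onto the boundary: handling it cleanly requires the convexity properties of $core(X)$ (Definition 4.1 and Lemma 4.5) to guarantee the minimizing path cannot escape through $\partial\mathcal{C}(X)$.

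It remains to check that every conical singularity $\lambda\in\Lambda$ is a vertex of the triangulation. Here I would simply note that $\lambda\in core(X)$ by definition of the core, so $\lambda$ lies in some closed cell of the decomposition. If $\lambda$ were not a vertex, it would be an interior point of a cell or lie in the relative interior of an edge; but edges of $\mathcal{A}$ and of $\partial\mathcal{C}(X)$ are saddle connections, whose interiors contain no conical singularities (Definition 3.2), and the open cells consist of regular points by construction. Hence $\lambda$ must be a vertex. Finally, to assemble the statement I would remark that the resulting cell structure, having all $2$-cells triangular and all of $\Lambda$ among its vertices, is by definition an ideal triangulation of $core(X)$, and that no arc of $\mathcal{A}$ can cross into $X\setminus core(X)$ since such an arc could be pushed off to the boundary against convexity, so $\mathcal{A}$ together with $\partial\mathcal{C}(X)$ indeed triangulates precisely $core(X)$.
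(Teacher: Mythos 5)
Your overall strategy---use maximality to manufacture a new saddle connection inside any complementary cell that is not a triangle---is in the same spirit as the paper's proof, but you implement it via length-minimizing diagonals, whereas the paper represents relative homology classes by broken lines of saddle connections, cuts each component down to a contractible polygon, and then invokes triangulability of flat polygons. The genuine gap is at the very first step: you assert that each complementary piece $P$ is contractible ``since we may add diagonals''. That is exactly what has to be proved, and it is where the paper spends most of its effort. A priori, a component of $\mathcal{I}\mathcal{C}(X)$ cut along a maximal geodesic arc system could have positive genus or disconnected boundary (possibly with a single vertex on each boundary component), and your diagonal argument, which presupposes that $P$ is an ideal polygon with at least four sides and at least two usable vertices, simply does not apply to such a piece. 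To close this you would have to extend the minimization argument to nontrivial relative homotopy classes of arcs inside $P$ (the analogue of the paper's broken lines representing nontrivial cycles or joining distinct boundary components), and verify that the geodesic representative does not degenerate onto $\partial P$.

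Two further steps are shaky even in the polygonal case. The claim that a minimizer cannot pass through a cone point of angle $\geq 2\pi$ is false: such points are nonpositively curved and shortest paths routinely pass through them. This by itself is harmless (the minimizer is then a broken line of saddle connections and one of its segments can be added), but the serious degenerate case is the one you acknowledge and do not resolve: the minimizer between your chosen pair of vertices may lie entirely in $\partial P$, and your fallback---a geodesic loop at the vertex homotopic rel endpoint to a boundary arc---need not produce anything new, since its geodesic representative may be that boundary arc itself; ``a polygon with $\geq 4$ sides always admits such a pair'' is not justified for immersed flat ideal polygons with identified vertices. Finally, ``the open cells consist of regular points by construction'' is not by construction: that every conical singularity is incident to some arc of a maximal system is itself a consequence of maximality and needs an argument (the paper derives it by joining singularities with broken lines of saddle connections). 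Each of these points is fixable, but as written they are gaps rather than details.
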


\begin{proof}
Since saddle connections of a geodesic arc system are non-intersecting geodesic segments, they cut out $core(X)$ into several flat surfaces with geodesic boundary. Besides, there are no digons in flat surfaces. Therefore, proving the lemma amount to prove that in any flat surface with geodesic boundary, there is a flat triangulation.\newline
Every relative homology class may be represented by a non-intersecting broken line of saddle connections. We want to cut out every connected component of $\mathcal{I}\mathcal{C}(X)$ into ideal triangles. If the genus of a connected component is nonzero, there is a broken line of saddle connections that represents a nontrivial cycle so we can cut out along saddle connections in order to get components whose genus is zero. Likewise, if there is a connected component whose boundary is not connected, there is a broken line of saddle connections that linking two components of the boundary. In this way, we construct a geodesic arc system such that every $2$-cell is a contractile domain whose boundary is a connected chain of saddle connection. Such cells are plane polygons and it is well known that every plane polygon has a triangulation. It is the same for flat surfaces, see Figure 6.\newline
Every conical singularity is a vertex of the triangulation because every conical singularity can be connected to another zero through a broken line of saddle connections. Each of these connections either belongs to the maximal geodesic arc system or intersects the interior of some saddle connection of the MGAS. Therefore, for every couple of conical singularities, there is a broken line of saddle connections of the MGAS joining them.
\end{proof}

\begin{figure}
\includegraphics[scale=0.3]{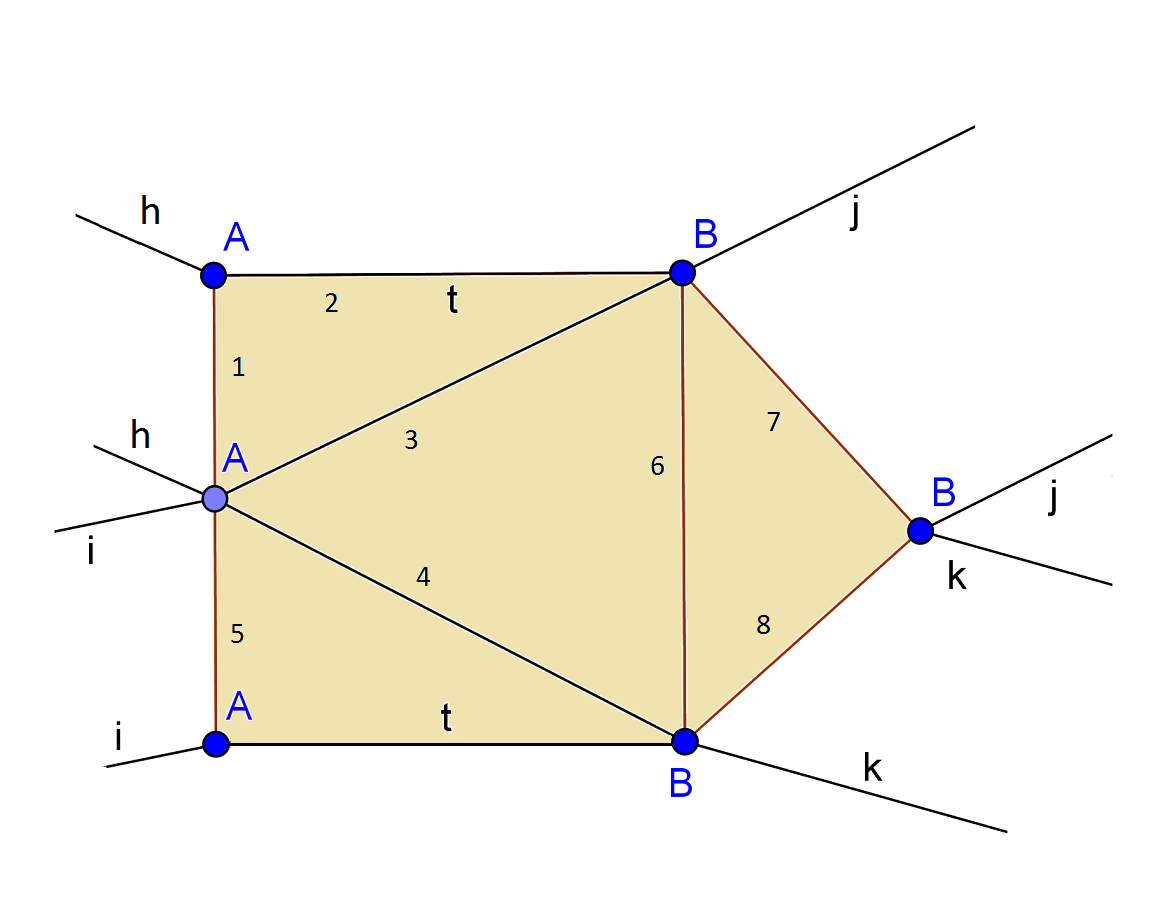}
\caption{A surface in $\mathcal{H}^{1}(1^{2},-1^{4})$ with a maximal geodesic arc system including 8 saddle connections drawn on it}.
\end{figure}

\begin{rem} The saddle connections of $\partial\mathcal{C}(X)$ belong to every MGAS.
\end{rem}

We say that a maximal geodesic arc system defines a partial ideal triangulation of the surface. The 2-dimensional cells are either domains of poles or ideal triangles belonging to the interior of the core. We denote them by \textit{interior faces}. There are $p$ domains of poles and we define $t$ as the number of interior faces.\newline

\begin{lem}
We consider a flat surface with poles of higher order $(X,\phi)$ belonging to $\mathcal{H}^{k}(a_1,\dots,a_n,-b_1,\dots,-b_p)$ and a maximal geodesic arc system with $|A|$ edges and $t$ interior faces of the core. Then we have:
$$|A| = 2g-2+n+p+t$$
\end{lem}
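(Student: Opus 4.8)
The plan is to read off a CW decomposition of the closed surface $X$ directly from the maximal geodesic arc system and then apply Euler's formula, using that $\chi(X)=2-2g$, since $g$ is by definition the genus of the underlying compact Riemann surface and the identity $\sum a_i-\sum b_j=k(2g-2)$ fixes this $g$.

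First I would describe the cells. The $0$-cells are the $n$ conical singularities $\Lambda$; the $1$-cells are the $|A|$ saddle connections of the maximal geodesic arc system; and the $2$-cells are of two kinds: the $t$ interior faces (the ideal triangles filling $\mathcal{I}\mathcal{C}(X)$) together with the $p$ domains of poles. The $1$-skeleton is the graph formed by $\Lambda$ and the saddle connections of the arc system; its vertex set is exactly $\Lambda$, since saddle connections of an arc system meet only at conical singularities and, by Lemma 4.9, every conical singularity occurs as a vertex. By Lemma 4.9 the arc system triangulates $core(X)$, so the interior faces are genuine open $2$-cells with boundary in the $1$-skeleton. By Lemma 4.5 each domain of pole is an open topological disk containing one pole in its interior (so the poles are \emph{not} vertices), and its frontier in $X$ lies in $\partial\mathcal{C}(X)$, hence, by Proposition 4.4 and Remark 4.11, in the $1$-skeleton; thus each domain of pole is an open $2$-cell whose attaching map lands in the $1$-skeleton. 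These cells are pairwise disjoint and cover $X$: indeed $\mathcal{I}\mathcal{C}(X)$ is covered by the interior faces together with the interior edges and vertices, $X\setminus core(X)$ is the disjoint union of the $p$ domains of poles, and $\partial\mathcal{C}(X)$ is contained in the $1$-skeleton by Remark 4.11. So $X$ carries a CW structure with $n$ cells of dimension $0$, $|A|$ of dimension $1$, and $t+p$ of dimension $2$.

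It then remains only to compute: $2-2g=\chi(X)=n-|A|+(t+p)$, which rearranges to $|A|=2g-2+n+p+t$, as claimed. I would also note the degenerate case separately: when $\mathcal{I}\mathcal{C}(X)=\emptyset$ we have $t=0$, $core(X)$ is the graph $\partial\mathcal{C}(X)$, and the only $2$-cells are the $p$ domains of poles, so the count is unchanged and gives $|A|=2g-2+n+p$.

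The main obstacle is not computational but the verification that the domains of poles really are $2$-cells of a CW structure on the \emph{closed} surface $X$, i.e. that each admits a characteristic map from the closed disk whose boundary lands in the $1$-skeleton; this follows from Lemma 4.5 (topological disk, pole in the interior), Proposition 4.4 (the frontier is a finite union of saddle connections) and Remark 4.11 (those saddle connections belong to every maximal geodesic arc system). Once this structural point is settled, the identity is the standard Euler-characteristic bookkeeping.
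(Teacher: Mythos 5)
Your proposal is correct and is essentially the paper's own argument: the paper likewise computes the Euler characteristic of the partial ideal triangulation with the $n$ conical singularities as vertices, the $|A|$ saddle connections as edges, and the $t$ interior faces plus $p$ domains of poles as $2$-cells, giving $n-|A|+p+t=2-2g$. Your additional verification that the domains of poles are genuine $2$-cells is just a more careful write-up of the same step the paper takes for granted.
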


\begin{proof}
Considering the Euler characteristic of the partial ideal triangulation defined by the maximal geodesic arc system, we have:
$$n-|A|+p+t=2-2g$$\newline
that is $$|A|=2g-2+p+n+t$$\newline
\end{proof}

\subsection{Discriminant and walls-and-chambers structure}

The following proposition is crucial to justifiy the definition we give of the discriminant of a stratum, see Figure 5 for an example of a domain of pole where interior angles are strictly greater than $\pi$.

\begin{prop}
Let $\theta_{1},\dots,\theta_{m}$ be the interior angles of the boundary of the domain of a pole of higher order. Then for every $1 \leq i \leq m$, we have $\theta_{i}\geq \pi$.
\end{prop}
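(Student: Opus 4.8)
The plan is to argue by contradiction, using the convexity of $core(X)$, which is its defining property (Definition 4.1). Fix a pole of higher order $P$ and let $D_P$ be its domain, that is, the connected component of $X\setminus core(X)$ containing $P$; by Lemma 4.5, $D_P$ is a topological disk, and by Proposition 4.4 its topological boundary $\partial D_P$ is a finite concatenation of saddle connections contained in $\partial\mathcal{C}(X)\subseteq core(X)$. Let $v\in\Lambda$ be one of the conical singularities occurring as a vertex of $\partial D_P$, and let $\theta$ be the corresponding interior angle of $\partial D_P$ at $v$, measured inside $D_P$. Assume, toward a contradiction, that $\theta<\pi$.

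The next step is to localize at $v$. Let $s_1$ and $s_2$ be the two saddle connections of $\partial D_P$ meeting at $v$ and bounding the angular sector $S$ of $D_P$ of opening $\theta$ at $v$. Since $P$ lies at infinite distance from every conical singularity, one can choose $\epsilon>0$ so small that the closed metric ball $\overline{B(v,\epsilon)}$ contains no singularity besides $v$ and is isometric to the ball of radius $\epsilon$ in a Euclidean cone of angle equal to the cone angle at $v$, and moreover $\epsilon$ is smaller than the lengths of $s_1$ and $s_2$. Let $x\in s_1$ and $y\in s_2$ be the points at distance $\epsilon$ from $v$; then $x,y\in s_1\cup s_2\subseteq\partial\mathcal{C}(X)\subseteq core(X)$.

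Finally, I would consider the Euclidean segment $[x,y]$ drawn inside $S\cap B(v,\epsilon)$. Because $\theta<\pi$, this region is isometric to a convex circular sector, so $[x,y]$ lies in it; moreover every point of $[x,y]$ is at distance between $\epsilon\cos(\theta/2)>0$ and $\epsilon$ from $v$, so $[x,y]$ avoids $v$ and every other singularity, hence is a genuine geodesic segment in the sense of Definition 4.1. Its endpoints lie in $core(X)$, while its open part $(x,y)$ lies in the interior of $S$, hence in $D_P=X\setminus core(X)$, which is disjoint from $core(X)$. This contradicts the convexity of $core(X)$. Therefore $\theta\geq\pi$, which is the assertion.

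The only point that requires care is the local model at $v$: one must be sure that a vertex of $\partial D_P$ is an actual conical singularity — which holds because $\partial\mathcal{C}(X)$ is locally isometric to a straight line at regular points, as in the proof of Proposition 4.4 — and that the corner-cutting segment through $D_P$ is admissible as a geodesic segment, which is exactly what forces the choice of an $\epsilon$ small enough that $B(v,\epsilon)$ is a standard cone-ball avoiding all other singularities. A little extra bookkeeping handles the degenerate possibilities that $\partial D_P$ passes through $v$ more than once or runs along a single saddle connection as a slit: in each case the same corner-cutting argument applies verbatim to the sector of opening $\theta$ under consideration.
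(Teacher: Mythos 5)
Your proposal is correct and follows essentially the same route as the paper: the paper's proof is exactly this corner-cutting argument, choosing two points on the incident boundary saddle connections near the vertex so that the chord between them lies in the domain of the pole, contradicting the convexity of $core(X)$. Your version merely spells out the local cone model and the choice of $\epsilon$ in more detail, which is a welcome but not essentially different elaboration.
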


\begin{proof}
As $\partial\mathcal{C}(X)$ is a finite union of saddle connections, the boundary of the domain of a pole is a finite union of saddle connections too. We suppose there is an interior angle with a magnitude $0 \leq \theta_{i} < \pi$ between two saddle connections $u$ and $v$ at a conical singularity $A$. If we consider a sufficently small neighborhood of $A$ we can choose two points $B$ and $C$ on $u$ and $v$ such that there is a geodesic segment between $B$ and $C$ belonging to the domain of the pole. This contradicts convexity of $core(X)$.
\end{proof}

Changes of the core define a walls-and-chambers structure on the strata. It should be recalled that two saddle connections are said to be parallel when their relative homology classes are linearly dependent over $\mathbb{R}$, see Lemma 3.6.

\begin{defn} A flat surface with poles of higher order $(X,\phi)$ belongs to the discriminant of the stratum if and only if two nonparallel consecutive saddle connections of the boundary of the core share an angle of $\pi$. Chambers are defined as the connected components of the complement to the discriminant in the strata.\newline
\end{defn}

\begin{ex}
Some flat surfaces $(X,\phi) \in \mathcal{H}^{1}(1,-1^{3})$ belong to the discriminant $\Sigma$ of the stratum, see Figure 7.

\begin{figure}
\includegraphics[scale=0.3]{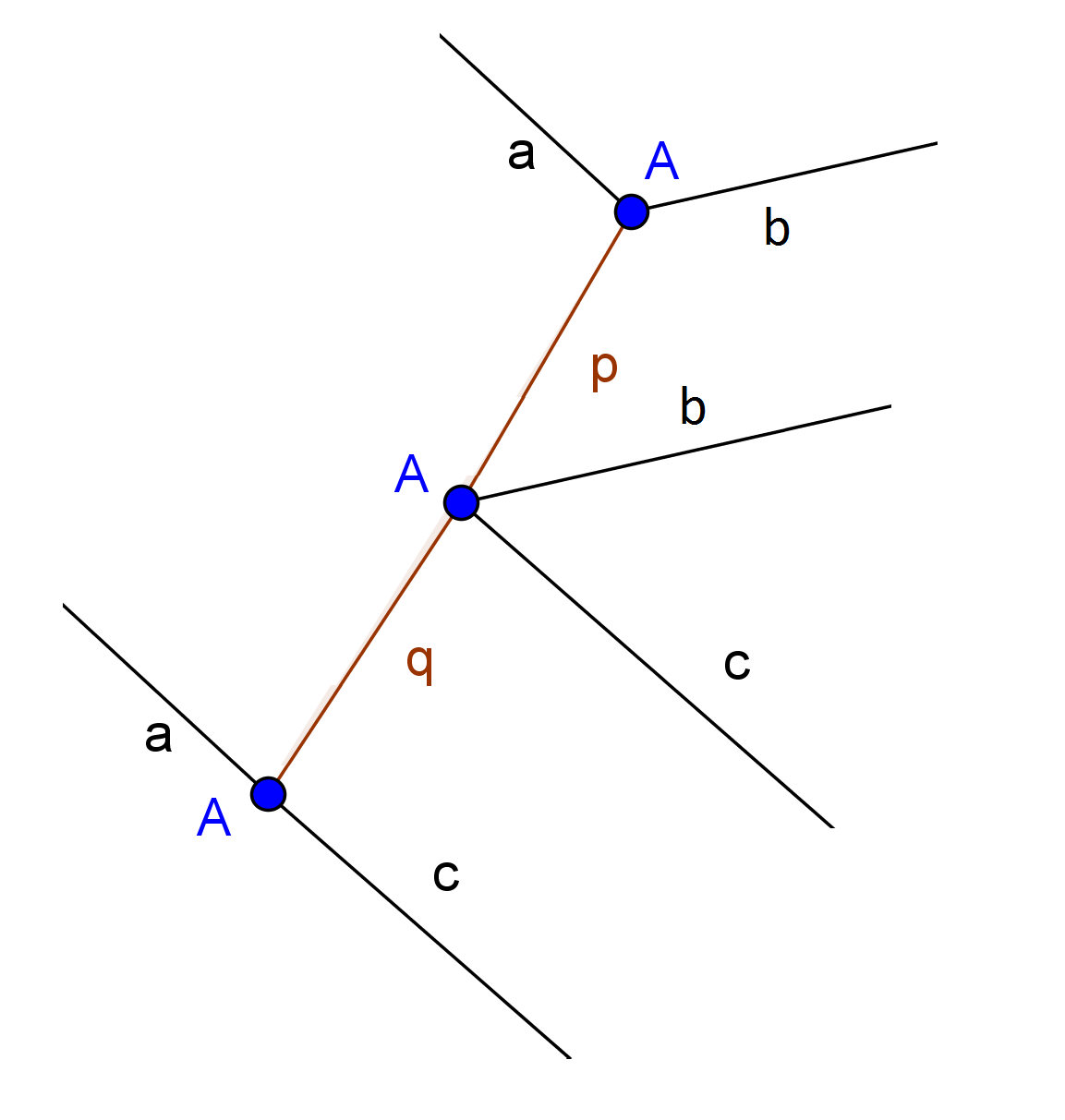}
\caption{Two saddle connections of the boundary of the core sharing an angle of $\pi$.}
\end{figure}

\end{ex}

\begin{prop} The discriminant is an hypersurface of real codimension one in the stratum. If $k=1\ or\ 2$, then it is also invariant under the action of $GL^{+}(2,\mathbb{R})$.
\end{prop}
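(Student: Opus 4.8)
The plan is to prove the two assertions separately. For the codimension-one statement I work in the period coordinates on the stratum — the coordinates given by the period map of $H_{\xi_{1}}$, where for $k\geq 2$ one passes through the canonical $k$-cover — so that the holonomy of any fixed saddle connection becomes a holomorphic function (indeed a $\mathbb{C}$-linear functional) of those coordinates. For the $GL^{+}(2,\mathbb{R})$-invariance I instead use that when $k=1$ or $2$ the action is by affine changes of the flat charts, which preserves the core, the combinatorics of $\partial\mathcal{C}$, parallelism of saddle connections, and straightness.

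\emph{Codimension one.} Fix $(X_{0},\phi_{0})$ in the discriminant $\Sigma$. By Proposition 4.4 its core has finitely many boundary saddle connections, and every saddle connection shorter than a fixed bound persists on some neighborhood $U$ of $(X_{0},\phi_{0})$ (cf. the proof of Proposition 3.3); hence only finitely many triples $(c,c',A)$ — two such saddle connections sharing a conical singularity $A$ and consecutive along $\partial\mathcal{C}$ at $(X_{0},\phi_{0})$ — are relevant. Orient $\partial\mathcal{C}$ with the core on the left, so one arrives at $A$ along $c$ and leaves along $c'$. By Proposition 4.11 the interior angle $\theta$ at $A$ is $\geq\pi$, and $\theta=\pi$ means exactly that $c$ and $c'$ develop to a straight segment; since $\theta$ is continuous and equals $\pi$ at $(X_{0},\phi_{0})$, after shrinking $U$ this is equivalent to $\mathrm{hol}(c')/\mathrm{hol}(c)\in\mathbb{R}_{>0}$, the holonomies being oriented along the traversal. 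Thus near $(X_{0},\phi_{0})$ the contribution of $(c,c',A)$ to $\Sigma$ is the zero set $Z_{c,c'}$ of the real-analytic function $F_{c,c'}=\Ima\big(\mathrm{hol}(c')/\mathrm{hol}(c)\big)$. Now $F_{c,c'}$ is the imaginary part of a holomorphic function and is not identically zero: otherwise $\mathrm{hol}(c')/\mathrm{hol}(c)$ would be a real-valued holomorphic function, hence a real constant $\lambda$, so $\mathrm{hol}(c')-\lambda\,\mathrm{hol}(c)\equiv 0$ on $U$; as the period map is a local biholomorphism, a class whose holonomy vanishes identically is zero, whence $[c']=\lambda[c]$ with $\lambda\in\mathbb{R}$, i.e. $c$ and $c'$ are parallel — contrary to hypothesis. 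Hence each $Z_{c,c'}$ is a real-analytic hypersurface, and near $(X_{0},\phi_{0})$ the discriminant is the union of the finitely many relevant $Z_{c,c'}$; in particular $\Sigma$ is a real-analytic subset of real codimension one.

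\emph{Invariance under $GL^{+}(2,\mathbb{R})$ when $k=1$ or $2$.} Such a $g$ acts by composing the flat charts with a fixed orientation-preserving affine map, hence carries geodesic segments bijectively to geodesic segments, therefore convex sets to convex sets, while fixing the set $\Lambda$ of conical singularities together with their orders. Consequently $g$ sends $core(X)$ to $core(g\cdot X)$, $\partial\mathcal{C}(X)$ to $\partial\mathcal{C}(g\cdot X)$, and consecutive boundary saddle connections to consecutive boundary saddle connections. Since $g$ acts on holonomy vectors by an invertible $\mathbb{R}$-linear map, it takes nonparallel pairs to nonparallel pairs. Finally, although $g$ does not preserve angles, two saddle connections share an angle of $\pi$ precisely when their union develops to a straight segment, and affine maps preserve straightness; so $g$ sends a pair sharing an angle of $\pi$ to such a pair. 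Combining, $(X,\phi)\in\Sigma$ if and only if $g\cdot(X,\phi)\in\Sigma$.

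\emph{Main difficulty.} The delicate point is entirely in the first assertion: one must verify that for a fixed consecutive pair the angle-$\pi$ condition is a single nontrivial real-analytic equation exactly in the nonparallel case, and that near a point of $\Sigma$ only finitely many of these equations are relevant — which amounts to controlling how the combinatorial type of $\partial\mathcal{C}$, and the list of triples $(c,c',A)$, vary under small deformation, and to making precise, via the canonical $k$-cover when $k\geq 2$, that the holonomy of a relative cycle is a holomorphic function of the period coordinates.
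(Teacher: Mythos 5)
Your proof is correct and follows essentially the same route as the paper: in period coordinates the angle-$\pi$ condition for a nonparallel consecutive pair is the vanishing of one nontrivial real-analytic function (your $\Ima\bigl(\mathrm{hol}(c')/\mathrm{hol}(c)\bigr)$ is the paper's determinant/real-collinearity condition), giving real codimension one, and invariance follows because the linear action preserves collinearity/straightness. You simply make explicit some points the paper leaves implicit (local finiteness of the relevant pairs, nonvanishing via independence of periods of nonparallel classes, and preservation of the core and of consecutiveness under the action).
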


\begin{proof}
Strata are orbifolds with local coordinates given by the period map. If two homology classes are not parallel, their periods are independent. We interpret periods as vectors of $\mathbb{R}^{2}$. Two vectors are collinear if and only if their determinant is zero. Therefore the locus of real colinearity between the holonomy vectors of two non-parallel saddle connections is of real codimension one. If two non-parallel consecutive saddle connections share an angle of $\pi$, then they are real colinear and the differential belongs to a locus of real codimension one.\newline
The action of $GL^{+}(2,\mathbb{R})$ preserves real colinearity. Therefore, if for a given flat surface there is an angle of $\pi$ between two consecutive saddle connections of the boundary of a domain of pole, then it will also be true for any flat surface in the $GL^{+}(2,\mathbb{R})$-orbit. The action preserves the geometric condition satisfied by the surfaces of the discriminant.\newline
\end{proof}

Many interesting quantities that are constant along the chambers are derived from the topological embedding of the boundary of the core. 

\begin{prop} 
The topological structure on a flat surface with poles of higher order $(X,\phi)$ defined by the embedded graph $\partial\mathcal{C}(X)$ is invariant along the chambers. The number and the degrees of the conical singularities belonging to each connected component of $\mathcal{I}\mathcal{C}(X)$ are invariant too.
\end{prop}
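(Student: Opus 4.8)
The plan is to prove \emph{local constancy} of the combinatorial data on each chamber and then to invoke the connectedness of chambers (they are, by Definition~4.12, the connected components of the complement of the discriminant $\Sigma$). So fix $(X_{0},\phi_{0})\notin\Sigma$. By Proposition~4.4 the boundary $\partial\mathcal{C}(X_{0})$ is a finite union of saddle connections $\gamma_{1},\dots,\gamma_{r}$, and by Lemma~4.5 its complement is a union of $p$ topological-disk pole domains. Since strata are orbifolds with local coordinates given by the period map, every saddle connection of $X_{0}$ persists and varies continuously on a neighbourhood; moreover, by Lemma~3.1 the core of every nearby surface stays inside a fixed compact region $K$, and there are only finitely many saddle connections of $X_{0}$ of length at most $\mathrm{diam}(K)$. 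Shrinking the neighbourhood $U$ of $(X_{0},\phi_{0})$, I may assume that all of these persist over $U$ and that every saddle connection of a surface in $U$ lying inside $K$ is a continuation of one of them, so that over $U$ I control every saddle connection that could conceivably belong to $\partial\mathcal{C}$.

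First I would show that the deformed $\gamma_{1},\dots,\gamma_{r}$ still form $\partial\mathcal{C}(X)$ for every $X\in U$, with the same cyclic (ribbon) structure and the same family of complementary pole domains. Each deformed $\gamma_{i}$ remains on $\partial\mathcal{C}(X)$ because at every conical singularity the angle on the pole side between two consecutive boundary saddle connections stays $\ge\pi$: if those two are non-parallel in the sense of Lemma~3.6, then since $(X_{0},\phi_{0})\notin\Sigma$ this angle is strictly greater than $\pi$ at $X_{0}$ (Proposition~4.11 rules out angles $<\pi$), hence $>\pi$ on all of $U$ by continuity; if they are parallel, the angle is automatically $\pi$, and the topological condition of Lemma~3.6 (one bounded complementary component has trivial holonomy) persists as long as both saddle connections persist, so that configuration is itself stable. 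No new edge can appear either: a new edge of $\partial\mathcal{C}(X)$ would be a saddle connection lying in $K$ that is not a continuation of any $\gamma_{i}$ and that bounds one of the pole domains; but the pole domains vary continuously from those of $X_{0}$, which are already sealed off exactly by $\gamma_{1},\dots,\gamma_{r}$, so no such edge exists for $U$ small enough. Hence the embedded graph $\partial\mathcal{C}(X)\subset X$, together with its cyclic orders at vertices and its set of complementary pole domains, has constant topological type on $U$.

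Finally I would track the conical singularities. Their number and degrees are fixed by the stratum, so only their position relative to $\partial\mathcal{C}(X)$ is at issue. A conical singularity $B$ in the open set $\mathcal{I}\mathcal{C}(X_{0})$ has positive distance to $\partial\mathcal{C}(X_{0})$; since both $B$ and the graph $\partial\mathcal{C}(X)$ move continuously with $X$, this distance stays positive over a possibly smaller $U$, so $B$ never meets the graph and stays in the same complementary face, that is, in the same connected component of $\mathcal{I}\mathcal{C}(X)$; similarly a conical singularity lying on $\partial\mathcal{C}(X_{0})$ stays on $\partial\mathcal{C}(X)$. Therefore the number of connected components of $\mathcal{I}\mathcal{C}(X)$ and the multiset of conical singularities (with their degrees) carried by each are constant on $U$, hence on the whole chamber. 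I expect the delicate point to be the non-appearance of new boundary edges away from $\Sigma$: this is exactly where one must use both the compactness of the core (Lemma~3.1) and the precise content of Definition~4.12, checking that the only mechanism by which $\partial\mathcal{C}$ could gain or lose an edge is the flattening to angle $\pi$ of a corner between two non-parallel consecutive saddle connections --- that is, crossing the discriminant.
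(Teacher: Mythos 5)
Your strategy (prove local constancy of the data off the discriminant, then use that chambers are connected) is genuinely different from the paper's proof, which argues via maximal geodesic arc systems: there the topological type of $\partial\mathcal{C}(X)$ can only change when an ideal triangle of such a triangulation degenerates, and an analysis of the possible degenerations shows this forces a conical singularity to cross a saddle connection on the boundary of a domain of pole, i.e.\ a crossing of the discriminant. Your route is viable, but as written it has a genuine gap. First, the claim that, after shrinking $U$, every saddle connection of a surface in $U$ lying in the compact set $K$ is a continuation of a saddle connection of $X_{0}$ is false: Proposition 3.3 only gives lower semicontinuity, and new saddle connections can be created by arbitrarily small deformations (typically when a geodesic joining two singularities that passes through a third conical singularity in $X_{0}$ becomes unblocked). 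So you do not actually control all candidates for edges of $\partial\mathcal{C}(X)$ in this way. Second, and more importantly, the two central steps --- that the continued $\gamma_{i}$ still lie on $\partial\mathcal{C}(X)$, and that no new boundary edge appears --- are asserted rather than proved: ``the pole domains vary continuously from those of $X_{0}$, which are already sealed off'' is essentially the statement to be established, so this part of the argument is circular.

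What is missing is a lemma of the following kind: for $X\in U$, the continued loops around each pole have corner angles $\geq\pi$ on the pole side (your use of Proposition 4.11, Lemma 3.6 and the definition of $\Sigma$ for this point is fine) and contain no conical singularity in the open regions $D_{j}^{\circ}$ they bound; then $X\setminus\bigcup_{j} D_{j}^{\circ}$ is convex. One way to get this is a Gauss--Bonnet count applied to a would-be geodesic chord of some $D_{j}$: the disk it cuts off has no curvature in its interior, exterior angles $\leq 0$ at the corners of the loop and $<\pi$ at the two endpoints of the chord, contradicting a total turning of $2\pi$. Convexity gives $core(X)\subseteq X\setminus\bigcup_{j} D_{j}^{\circ}$, and since the continued $\gamma_{i}$ are saddle connections, hence contained in $core(X)$, each $D_{j}^{\circ}$ is exactly a domain of pole and $\partial\mathcal{C}(X)$ is exactly the continued graph. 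This yields both ``old edges persist'' and ``no new edges'' at once, with no need to control all saddle connections in $K$. With such a lemma supplied, the remainder of your argument (tracking the conical singularities by continuity and invoking connectedness of chambers) does go through.
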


\begin{proof}
Any maximal geodesic arc system on a flat surface defines an ideal triangulation of the core. If a continuous path in the stratum does not lead to degeneration of an ideal triangle, then the topological type of the embedded graph $\partial\mathcal{C}(X)$ in $X$ does not change. Since no saddle connection can shring (otherwise, we would leave the stratum), degeneration of an ideal triangle means an interior angle of $\pi$.\newline
If an ideal triangle whose edges do not belong to $\partial\mathcal{C}(X)$ degenerates, then we can find another maximal geodesic arc system whose triangles are not degenerate. Therefore, the topological type of the embedded graph $\partial\mathcal{C}(X)$ can only change in the case of degeneration of a triangle one of which edge belongs to the boundary of the core. If the interior angle equal to $\pi$ is not opposite to the edge that belongs to the boundary of the core, we can find another maximal geodesic arc system again. Therefore, if a continuous path in the stratum modifies the topological structure, it is because a conical singularity has crossed a saddle connection in the boundary of some domain of pole. Equivalently, the path crosses the discriminant.\newline
\end{proof}

The number of saddle connections is not invariant as we deform a surface inside the chamber, see Figure 8. For these two surfaces, the topological structure of the core is the same. It is an ideal pentagon. The five domains of poles are infinite cylinders glued along the sides of the pentagon. We find that there are less saddle connections in the nonconvex pentagon. It is worth noting that the core is always convex in the sense of the flat metric without necessarily being convex as a plane polygon.\newline

\begin{figure}
\includegraphics[scale=0.3]{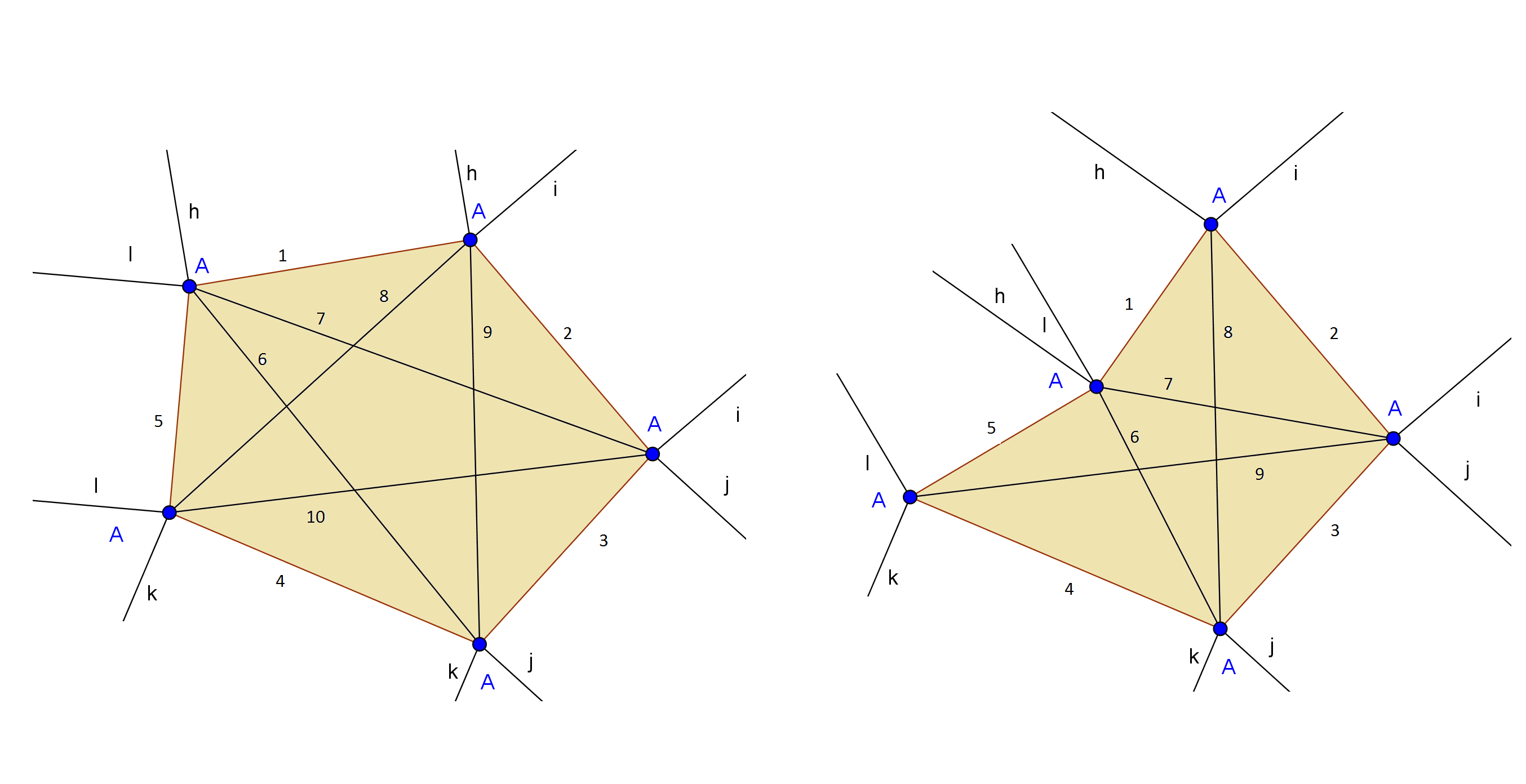}
\caption{Two surfaces of $\mathcal{H}^{1}(3,-1^{5})$ : the first has 10 saddle connections while the second has 9 saddle connections.}
\end{figure}

\subsection{Meromorphic differentials with prescribed residues}

If certain conditions are respected, every configuration of residues can be realized by a surface in a given stratum of $1$-forms. This lemma is crucial in the proof of Theorem 2.3.

\begin{lem}
Let $\mathcal{H}^{1}(a_1,\dots,a_n,-b_1,\dots,-b_m,-1^{s})$ be a stratum of meromorphic differentials with $s\geq0$ simple poles and $m\geq0$ poles of higher order on a topological surface of genus $g$. Let $(\mu_{1},\dots,\mu_{m},\nu_{1},\dots,\nu_{s})\in \mathbb{C}^{m} \times (\mathbb{C}^{\ast})^{s}$ be a family of numbers such that:\newline
(i) $\sum \limits_{i=1}^m \mu_{i} + \sum \limits_{j=1}^s \nu_{j} = 0$.\newline
(ii) Either $(\mu_{1},\dots,\mu_{m},\nu_{1},\dots,\nu_{s})$ generates $\mathbb{C}$ as a $\mathbb{R}$-vector space or there are exactly two nontrivial numbers among $(\mu_{1},\dots,\mu_{m},\nu_{1},\dots,\nu_{s})$.\newline
Then there is a meromorphic differential in the ambiant stratum such that the residues at the simple poles are $(\nu_{1},\dots,\nu_{s})$ and the residues at the poles of higher order are $(\mu_{1},\dots,\mu_{m})$.
\end{lem}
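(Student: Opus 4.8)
The plan is to reduce, by surgeries that leave every pole untouched, to a realization problem on the Riemann sphere, and then to build the required spherical differential by an induction on the number of poles with explicit base cases; hypothesis $(ii)$ is what makes both the base cases and the inductive step go through.

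\emph{Reduction to genus zero with a single zero.} The two surgeries of Subsection 3.4 — adding a handle, and (since $k=1$) breaking up a conical singularity — are supported in a small disc about a regular point, respectively about a conical singularity, so they leave a neighbourhood of every pole, hence its order and residue, unchanged. Adding a handle at a zero raises the genus by one and the order of that zero by two, while breaking up a zero of order $\sum_i a_i$ realizes any partition into parts $a_1,\dots,a_n\ge1$. Setting $c:=\sum_j b_j+s-2$, it therefore suffices to realize the prescribed residues on a surface of the genus-zero stratum $\mathcal{H}^{1}(c,-b_1,\dots,-b_m,-1^{s})$: performing $g$ handle additions so as to accumulate a single zero of order $c+2g=\sum_i a_i$ and then breaking that zero up produces a surface of the target stratum with the same residues. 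When $c=0$ the only stratum to be treated is $\mathcal{H}^{1}(-1,-1)$, realized by the bi-infinite flat cylinder whose core curve carries the prescribed holonomy (in $\mathcal{H}^{1}(-2)$, hypothesis $(i)$ forces the residue to vanish, $(ii)$ fails, and there is nothing to prove).

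\emph{The spherical construction.} If exactly two of the numbers are nonzero, say $\rho$ and $-\rho$, then every zero-residue pole is a pole of higher order; I would first realize the two nonzero-residue poles and the zero by an explicit rational form $\lambda\,(z-1)^{c_0}z^{-\beta}\,dz$ — a direct computation of its residues fixes $\lambda$ — and then glue to this surface, one at a time and along a slit issued from the zero, the flat neighbourhood of a pole of order $b$ with trivial residue (the infinite cone of angle $(b-1)2\pi$ with a disc removed, as in Subsection 3.2). Each such connected sum keeps the genus zero and a single zero while raising the order of that zero by $b$, so the final surface lies in $\mathcal{H}^{1}(c,-b_1,\dots,-b_m,-1^{s})$. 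If instead the residues span $\C$ over $\mathbb{R}$, then $m+s\ge3$ and I would induct on $m+s$: applying the local surgery of Boissy \cite{Bo} that merges two poles into one and adds their residues drops us to $m+s-1$ poles, and a short combinatorial check shows that the two merged poles can always be chosen so that the new residue tuple still satisfies $(ii)$ — it either still spans $\C$, in which case the induction continues, or it has exactly two nonzero entries and we fall into the preceding case. The induction then bottoms out at the explicit forms above.

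\emph{The main obstacle.} The difficulty is that $(i)$ alone does not suffice: the naive ansatz $\omega=\frac{\lambda(z-z_0)^{c}}{\prod_i(z-w_i)^{b_i}\prod_j(z-u_j)}\,dz$ has as many equations as free parameters, but for sufficiently aligned residues these equations force the zero $z_0$ to collide with one of the poles — the simplest instance being $\mathcal{H}^{1}(2,-2^2)$ with both residues zero, which one checks to be empty. Hypothesis $(ii)$ is exactly the clause that excludes this: in the ``two nonzero'' regime the surface is rigid enough to be assembled by hand from one flat cylinder (or one higher-order-pole neighbourhood) together with trivial-residue cones, whereas in the ``spanning'' regime the extra $\mathbb{R}$-direction in the residues supplies the freedom needed to keep the unique zero away from the poles while merging. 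Accordingly the technical core of the proof is to check that the pole-merging surgery stays inside the prescribed genus-zero one-zero stratum and to run the bookkeeping showing the merged pair can be chosen to preserve $(ii)$; the remaining steps are routine flat-geometric computations.
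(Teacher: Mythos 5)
Your reduction to genus zero with a single zero (handles plus breaking up the zero, both leaving the poles untouched) matches the paper's final steps, but both halves of your ``spherical construction'' have real gaps. In the two-nonzero-residue case, the gluing you describe does not stay in the intended stratum: if you cut an open slit from the zero to a regular point $P$ in the base surface and glue in a cone piece slit from its apex to a regular point $Q$, the identification matches apex with the zero but also matches $Q$ with $P$, and that second point acquires total angle $4\pi$ --- an extra simple zero. So each such connected sum produces zeros of orders $c+b-1$ and $1$, not a single zero of order $c+b$. (Also, the local model of Subsection 3.2 with a \emph{disc} removed has a non-geodesic boundary circle and cannot be glued isometrically to a slit at all.) The paper avoids this by pasting one boundary segment of the slit cone onto a \emph{closed} saddle connection based at the zero, so that the apex, the regular endpoint of the slit and the zero all collapse to one point; your version needs that more careful pasting, not an open-slit connected sum.

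The bigger gap is in the spanning case. Your induction hinges on a ``local surgery of Boissy that merges two poles and adds their residues,'' but no such surgery is established in the cited reference (Boissy's surgeries for meromorphic $1$-forms are breaking up zeros and bubbling handles), and what your induction actually requires is the reverse direction: starting from a surface with a merged pole, split it into two poles of prescribed orders $b_1,b_2$ with prescribed residues $r_1,r_2$. That splitting, with residues prescribed, is essentially a special case of the lemma you are trying to prove, so as written the argument is circular at its technical core --- which you yourself flag as ``the technical core of the proof'' and leave unchecked, along with the combinatorial claim that a merging pair preserving (ii) always exists. The paper handles the spanning case with no induction at all: it builds one boundary-saddle-connection piece per nontrivial residue, chains the trivial-residue pieces onto one of them, and then glues all remaining nontrivial pieces along the sides of a polygon whose side vectors are the nonzero residues taken in cyclic order of argument; convexity makes the gluing consistent and hypothesis (ii) is exactly what guarantees this polygon is nondegenerate. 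That convex-polygon assembly is the idea your proposal is missing, and replacing it by an asserted pole-merging surgery does not close the argument.
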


\begin{proof}
The case $m+s=1$ (one unique pole) is trivial and true. The case $m=0$ and $s=2$ (two simple poles and no poles of higher order) is also true because the corresponding strata are nonempty \cite{Bo} and the residues at simple poles are nontrivial. Rotating and rescaling provide every prescribed configuration of residues. In what follows, we exclude the previous cases. We first prove the lemma in the case $n=1$ and $g=0$.\newline
We consider a stratum $\mathcal{H}(a,-b_1,\dots,-b_m,-1^{s})$ of genus zero and a configuration of residues $(\mu_{1},\dots,\mu_{m},\nu_{1},\dots,\nu_{s})$. We are going to construct piece by piece a translation surface with poles with the adequate residues. We distinguish trivial and nontrivial residues.\newline
In Subsection 3.2, we construct neighborhoods of poles with nontrivial residues by removing the neighborhood of a semi-infinite line from an infinite cone. Thus, we get a translation surface (of infinite area) with a boundary that is a unique closed saddle connection whose holonomy corresponds to the residue at the pole. Analogously, we construct for every simple pole a semi-infinite cylinder whose boundary is a unique saddle connection whose holonomy corresponds to the residue at the pole. We call these surfaces \textit{nontrivial pieces}.\newline
For poles with trivial residues, we cut a slit starting from the conical singularity of the infinite cone so as to get a translation surface whose boundary is the union of two closed saddle connections whose holonomy are opposed to each other. These saddle connections can have any nontrivial holonomy. We call these surfaces \textit{trivial pieces}.\newline
Then, we have to connect every piece with the others as in a linear graph. By hypothesis, we have at least two nontrivial pieces. We chose one of them as one of the two terminal vertices. Then, we connect to this nontrivial piece every trivial piece in succession by pasting one of their boundary component with one of the boundary component of the other piece. This is always possible because we can rotate and rescale every trivial piece. In the end, we have a big translation surface with a boundary that is a unique saddle connection and $k$ nontrivial pieces remain. The boundary of each of these nontrivial pieces is a unique saddle connection whose holonomy vector is the residue of the pole. If there are exactly two nontrivial pieces, we are done. If there are at least three nontrivial pieces, we paste these pieces along the sides of a $k+1$-gon. In this way, the holonomy vectors of the sides are the holonomy vectors of the poles that belong to each piece. However, there are many different cyclic orders for the sides of this $k+1$-gon. Some of them generate a self-intersecting polygon. Among these cyclic orders, we choose the one defined by the cyclic order of arguments of the holonomy vectors of each side. Such a choice always generates a convex polygon. As $(\mu_{1},\dots,\mu_{m},\nu_{1},\dots,\nu_{s})$ generates $\mathbb{C}$ as a $\mathbb{R}$-vector space, this $k+1$-gon is not degenerate. Therefore, we have a translation surface with poles in the stratum $\mathcal{H}^{1}(a,-b_1,\dots,-b_m,-1^{s})$ with the prescribed residues.\newline

We then generalize the previous result for higher genus. Let $\mathcal{H}^{1}(a,-b_1,\dots,-b_m,-1^{s})$ be a stratum of genus $g$ with a unique conical singularity. For every configuration of residues satisfying the conditions of the lemma, there is a surface $X_{0}$ in $\mathcal{H}^{1}(a-2g,-b_1,\dots,-b_m,-1^{s})$ that realizes it. As we excluded the cases where there is one unique pole or two simple poles, we have $a-2g\geq1$ and a sufficient number of singularities ($n+p\geq3$). Then we start from the unique conical singularity in $X_{0}$ and we cut along $g$ slits. Then we paste cylinders inside each of them. This surgery is local so does not change the residues at the poles. Thus we get a surface of genus $g$ in $\mathcal{H}^{1}(a,-b_1,\dots,-b_m,-1^{s})$ with the prescribed residues. We have proved the lemma for every stratum with $n=1$.\newline

By now, we look for a surface with prescribed residues in $\mathcal{H}^{1}(a_1,\dots,a_n,-b_1,\dots,-b_m,-1^{s})$. The case already proved provides a surface $X$ in $\mathcal{H}^{1}(\sum \limits_{i=1}^n a_{i},-b_1,\dots,-b_m,-1^{s})$. We then break up the unique conical singularity to get a surface in the wanted strata. Breaking up is a local surgery that replaces one conical singarity by several conical singularities connected by small saddle connections, see \cite{Bo, EMZ}. This surgery does not modify the residues at the poles. This ends the proof.\newline
\end{proof}

There are examples of configurations of residues that cannot be realized in a givn stratum. They do not satisfy the hypothesis of Lemma 4.16.

\begin{ex}
In $\mathcal{H}^{1}(2,-2^{2})$, every differential from the stratum is of the form $\lambda\dfrac{z^{2}}{(z-1)^{2}}dz$ where $\lambda \in \mathbb{C}^{\ast}$. The residue of the differential at the double pole $z=1$ is $2\lambda$ while the residue at the double pole $z=\infty$ is $-2\lambda$. Any configuration $(u,-u)$ where $u \in \mathbb{C}^{\ast}$ satisfies the hypothesis. On the contrary, there is no meromorphic differential whose residues at the poles are both trivial.\newline
\end{ex}

\subsection{Quadratic differentials with prescribed periods}

For further applications, we give a systematic construction of flat surfaces such that the boundaries of domains of poles have prescribed holonomy vectors. Lemma 4.18 is a variant of Lemma 4.16. Breaking up conical singularities of quadratic differentials is not a local surgery. Therefore, it may modify the holonomy vectors of boundary edges of domains of poles (see Subsection 3.4). Consequently, the scope of Lemma 4.16 is narrower than its counterpart.

\begin{lem}
We consider surfaces of stratum $\mathcal{H}^{2}(a,-b_1,\dots,-b_p)$ of genus $g$ with labelled poles $P_{1},\dots,P_{p}$. Let $(\mu_{1},\dots,\mu_{p})\in \mathbb{C}^{p}$ with $p\geq2$ be a family of numbers such that:\newline
(i) $\sum \limits_{i=1}^p \mu_{i} = 0$.\newline
(ii) Either $(\mu_{1},\dots,\mu_{p})$ generates $\mathbb{C}$ as a $\mathbb{R}$-vector space or there are exactly two nontrivial numbers among $(\mu_{1},\dots,\mu_{p})$.\newline
Then there is a meromorphic quadratic differential in $\mathcal{H}^{2}(a,-b_1,\dots,-b_p)$ such that for every nonzero number $\mu_{i}$, the boundary of the domain of pole $P_{i}$ of order $b_{i}\geq3$ is a unique closed saddle connection whose length and direction (modulo $\pi$) are that of $\mu_{i}$. For poles of order two, $\mu_{i}$ is instead the holonomy vector of the waist curves of the cylinder.
\end{lem}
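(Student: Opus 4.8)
The plan is to adapt the proof of Lemma 4.16, restricting throughout to a single conical singularity ($n=1$) and replacing each building block by its quadratic-differential analogue. There is no counterpart here of the last step of that proof, in which one breaks up the unique zero to reach arbitrary $n$: for $k=2$ this surgery is not local (Subsection 3.4), so it would move the boundary saddle connections of the domains of poles, which is exactly why the statement is confined to $\mathcal{H}^{2}(a,-b_1,\dots,-b_p)$. Every identification performed below is carried out by an isometry that is locally $z\mapsto z+c$ or $z\mapsto -z+c$, so the quadratic differential structure (with at worst $\mathbb{Z}/2$ linear holonomy) always extends across it.

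First I would treat $g=0$. For each pole $P_i$ I build a piece $Y_i$ with geodesic boundary: if $\mu_i\neq 0$ and $b_i\geq 3$, the neighbourhood of a pole of order $b_i$ with residue $\mu_i^{2}$ produced by the flat-cone-with-slit construction of Subsection 3.2, whose boundary is one closed saddle connection of length $|\mu_i|$ and direction $\arg\mu_i$ modulo $\pi$; if $b_i=2$, then $\mu_i\neq 0$ automatically, since the residue at a pole of order $2$ is never zero, and I take a semi-infinite cylinder whose waist curve has holonomy $\pm\mu_i$, with a marked point on its boundary circle making that circle a closed saddle connection; if $\mu_i=0$ (hence $b_i\geq 3$), the trivial piece, an infinite cone of angle $(b_i-2)\pi$ slit from the apex, with boundary a pair of closed saddle connections of opposite, freely chosen holonomy. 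Conditions (i)--(ii) force at least two of the $\mu_i$ to be nonzero. I chain the trivial pieces linearly onto one of the nontrivial pieces, picking at each step the slit vector that permits the gluing, producing a connected piece $Z$ with a single free boundary saddle connection of holonomy $\pm\mu_{i_1}$. If exactly two $\mu_i$ are nonzero, the relation $\sum\mu_i=0$ gives the remaining nontrivial piece boundary holonomy $\pm\mu_{i_1}$ and I glue it to $Z$; otherwise the $\mu_i$ span $\mathbb{C}$ over $\mathbb{R}$ and there are $N\geq 3$ nontrivial pieces, so I form the Euclidean $N$-gon whose edge vectors are the nonzero $\mu_i$ ordered by argument --- it closes up because their sum vanishes, it is convex by that ordering, and it is non-degenerate by the spanning hypothesis --- and glue $Z$ and the remaining nontrivial pieces to its sides along the matching holonomies. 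In every case the polygon vertices (or the two endpoints of the single gluing curve) are identified to the one point carried by all boundary loops, so the surface has exactly one conical singularity, and an Euler-characteristic count forces its order to be $\sum b_i-4$; this settles $g=0$.

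To reach genus $g$ I use the local surgery of adding a handle (Subsection 3.4), which for $k=2$ increases the genus by one and adds $4$ to the order of the zero at the point where it is performed, creating such a zero if that point was regular, while leaving each $Y_i$ and its boundary untouched. From $p\geq 2$ and $b_i\geq 2$ one gets $a\geq 4g$, and separately $a\geq 1$. If $a-4g\geq 1$, I run the genus-zero construction in $\mathcal{H}^{2}(a-4g,-b_1,\dots,-b_p)$ and attach $g$ handles at the conical singularity along slit directions avoiding the finitely many boundary saddle connections. The only remaining case is $a-4g=0$, which forces $p=2$, $b_1=b_2=2$, $g\geq 1$ and $\mu_1=-\mu_2$; here I start from an infinite cylinder whose waist curve has holonomy $\mu_1$ and attach $g$ handles at a regular point, producing the order-$4g$ zero without disturbing the two ends.

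The step needing the most care is the assertion that, after the genus-zero assembly, the $Y_i$ are exactly the connected components of $X\setminus core(X)$ --- equivalently, that each $\gamma_i=\partial Y_i$ is the length-minimising loop around $P_i$ and lies in $\partial\mathcal{C}(X)$ --- so that the boundary of the domain of $P_i$ really is the single saddle connection of holonomy $\mu_i$ (respectively, the waist holonomy of the cylinder) and not some shorter curve cutting into the core. This should follow by applying the convexity argument of Lemma 3.1 to each $Y_i$, together with the fact that the polygon and the chained region form a convex subset having $\Lambda$ on its boundary; this, rather than the cone-angle and Euler bookkeeping, is the technical heart of the argument.
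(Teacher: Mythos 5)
Your construction follows the paper's own route almost step for step: the same genus-zero assembly out of nontrivial pieces (single boundary saddle connection realizing $\mu_i$), trivial pieces chained linearly, the convex polygon ordered by argument when at least three $\mu_i$ are nonzero, then handles added by a local slit-and-cylinder surgery away from the boundary saddle connections, with the residual case $\sum b_j=4$ (i.e.\ $p=2$, $b_1=b_2=2$, $a=4g$) treated separately from an infinite cylinder. Your closing paragraph, checking that the glued pieces really are the domains of the poles, is a legitimate extra verification that the paper leaves implicit, not a deviation.

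The one genuine flaw is your treatment of poles of order two with prescribed $\mu_i=0$. You dismiss this case by arguing that the residue at a double pole is never zero, hence ``$b_i=2$ forces $\mu_i\neq 0$''; but $\mu_i$ is prescribed input data, not the residue of a differential you have already built, and the hypotheses of the lemma allow $\mu_i=0$ at an order-two pole (e.g.\ $b=(2,4,4)$, $\mu=(0,1,-1)$ satisfies (i) and (ii)). Your recipe then produces no piece at all for that pole: your trivial piece is an infinite cone of angle $(b_i-2)\pi$ slit from the apex, which degenerates for $b_i=2$. The paper covers exactly this situation by allowing trivial pieces at double poles as well, namely a semi-infinite cylinder whose end is glued along two boundary saddle connections of equal length and direction, as in the surface of $\mathcal{H}^{2}(2,-2)$ of Figure 2; with that piece added to your toolkit (and the lemma's conclusion read as claiming nothing about poles whose prescribed $\mu_i$ vanishes), the rest of your argument goes through unchanged.
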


\begin{proof}
First, we prove the lemma in the case where $g=0$. We follow the same strategy as in the proof of Lemma 4.16. We consider a stratum $\mathcal{H}^{2}(a,-b_1,\dots,-b_m)$ of genus zero and a configuration of numbers $(\mu_{1},\dots,\mu_{p})$. We are going to construct a flat surface with the adequate saddle connections piece by piece.\newline
Just like in the proof of Lemma 4.16, \textit{nontrivial pieces} are domains of poles whose boundary is a unique closed saddle connection whose holonomy vector is $\mu_{i} \neq 0$.\newline
Likewise, \textit{trivial pieces} are domains of poles whose boundary is the union of two saddle connections with the same length and the same direction (modulo $\pi$). This construction is also permitted for poles of order two, see Figure 2.\newline

Pieces are connected with the others in the same way as in the proof of Lemma 4.16. Trivial pieces are connected as in a linear graph. By hypothesis, we have at least two nontrivial pieces. We chose one of them as one of the two terminal vertices. Then, we connect to this nontrivial piece every trivial piece in succession by pasting one of their boundary components with one of the boundary components of the other piece. This is always possible because we can rotate and rescale every trivial piece. In the end, we have a big flat surface with a boundary that is a unique saddle connection and $k$ nontrivial pieces remain. If there are exactly two nontrivial pieces, we are done. If there are at least three nontrivial pieces, we paste these pieces along the sides of a $k+1$-gon. Just like in the proof of Lemma 4.16, since $(\mu_{1},\dots,\mu_{p})$ generates $\mathbb{C}$ as a $\mathbb{R}$-vector space, there is a cyclic order of the sides such that the polygon is not degenerate nor self-intersecting.Therefore, we have a flat surface with poles of higher order in the stratum $\mathcal{H}^{2}(a,-b_1,\dots,-b_m)$ with the prescribed saddle connections.\newline

We then generalize the previous result for higher genus. Let $\mathcal{H}^{2}(a,-b_1,\dots,-b_p)$ be a stratum of genus $g$ with a unique conical singularity. For now, we exclude the case $p=2$ and $b_{1}=b_{2}=2$. For every configuration of numbers $(\mu_{1},\dots,\mu_{p})$ satisfying the conditions of the lemma, there is a surface $X_{0}$ in $\mathcal{H}^{2}(a-4g,-b_1,\dots,-b_p)$ that realizes it because the lemma is proved in genus zero. Then we start from the unique conical singularity in $X_{0}$ and we cut along $g$ slits internal to the central polygon. Then we paste cylinders inside each of them. This surgery is local so does not change the holonomy vectors of the saddle connections of the boundaries. Thus we get a surface of genus $g$ in $\mathcal{H}^{2}(a,-b_1,\dots,-b_p)$ with the prescribed holonomy vectors. In order to get a surface with prescribed holonomy vectors in stratum $\mathcal{H}^{2}(a,-2^{2})$ of genus $g$ with $g\geq1$, we cut $g$ slits with the same starting point in an infinite cylinder and add handles in the slits. We choose the width of the cylinder so as to realize every prescribed holonomy vector. This ends the proof.\newline
\end{proof}

\begin{rem}
Condition $\sum \limits_{i=1}^p \mu_{i} = 0$ is simply a technical hypothesis here to ensure the existence of a flat polygon essential for carrying out the proof. It does not reflect a deeper phenomenon. It is not a necessary condition.
\end{rem}

\section{Dynamics}

\subsection{Types of trajectories}

Using the canonical $k$-cover, every result about the dynamics of the vertical flow for simple meromorphic differentials can be transfered to meromorphic $k$-differentials. Therefore, in this section, most results are about translation surfaces with poles (corresponding to meromorphic $1$-forms).

Dynamics of flat surfaces with poles of higher order are different from the usual case because most trajectories go to infinity, that is to a pole. In the following, we essentially follow the definitions Strebel gives in \cite{St}.

\begin{defn} Depending on the direction, a trajectory starting from a regular point is of one of the four following types:\newline
- regular closed geodesic (the trajectory is periodic).\newline
- critical trajectory (the trajectory reaches a conical singularity in finite time).\newline
- trajectory converging to a pole of higher order.\newline
- recurrent trajectory (infinite trajectory nonconverging to a pole of higher order).
\end{defn}

\begin{prop} Recurrent trajectories lie in the core and for any recurrent trajectory there is a saddle connection in the same direction (up to a rotation of angle $\dfrac{2\pi}{k}$).
\end{prop}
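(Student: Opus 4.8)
The plan is to reduce to a meromorphic $1$-form, show that a recurrent trajectory cannot remain in the complement of the core, and then extract the saddle connection from Strebel's structure theory applied to the compact surface $core(X)$. For the reduction I would pass to the canonical $k$-cover $(\widetilde{X},\omega)$: the proposition only concerns the vertical flow and directions modulo $\dfrac{2\pi}{k}$, a recurrent trajectory of $(X,\phi)$ lifts to a trajectory of $(\widetilde{X},\omega)$ whose projection is the original one — so the lift converges to no pole — and, the $k$ lifts being permuted by the deck group, the lift is periodic only if the original trajectory is. Hence it is enough to prove the statement for a translation surface with poles, and from now on $\phi=\omega$.

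For the first assertion, let $\gamma$ be a recurrent trajectory. Since $core(X)$ is convex and $\gamma$ is a geodesic, every subsegment of $\gamma$ with both endpoints in $core(X)$ lies in $core(X)$, so $\gamma^{-1}(core(X))$ is an interval; I claim it is a final ray. If not, $\gamma$ is eventually outside $core(X)$, hence, by Lemma 4.5 and connectedness of its tail, eventually confined to a single domain of a pole $D_{P_i}$. The decisive sub-step is that a geodesic ray confined to $D_{P_i}$ must converge to the pole $P_i$: Lemma 3.1 already forbids the ray from leaving and re-entering a neighborhood $V$ of $P_i$ with convex complement (the two re-entry points would be joined, inside the convex set $X\setminus V$, by the corresponding geodesic segment of the ray), so the ray is eventually trapped in a compact collar of $P_i$ that contains no conical singularity; the explicit local model of $D_{P_i}$ from Subsection 3.2 — a finite-angle cone, possibly with a slit glued, minus a compact neighborhood of its apex — together with a Poincar\'e--Bendixson type argument on this annular region (it carries a fixed-point-free flow, and no closed geodesic bounds a disk free of conical singularities) forces the ray to escape towards $P_i$. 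This contradicts recurrence, so $\gamma$ is eventually contained in $core(X)$: that is what is meant by a recurrent trajectory lying in the core.

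For the second assertion, by the first part we may assume $\gamma\subset core(X)$, a compact flat surface whose boundary $\partial\mathcal{C}(X)$ is a finite union of saddle connections (Proposition 4.4). Following Strebel \cite{St}, in the direction $\theta$ of $\gamma$ this compact surface decomposes into finitely many maximal cylinders and minimal domains, each minimal domain being a compact subsurface bounded by saddle connections all in direction $\theta$, and every non-closed, non-critical recurrent trajectory that stays in $core(X)$ is dense in the interior of one such domain. As $\gamma$ is recurrent it is neither closed nor critical, hence lies in a minimal domain, whose boundary contains a saddle connection in direction $\theta$. Descending through the canonical cover turns this into a saddle connection of $(X,\phi)$ whose direction agrees with that of $\gamma$ modulo $\dfrac{2\pi}{k}$.

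I expect the main obstacle to be precisely the sub-step inside a domain of a pole. Convexity (Lemma 3.1) only prevents the confined ray from oscillating in and out of small neighborhoods of the pole; upgrading this to genuine convergence of the ray to $P_i$ — equivalently, ruling out a recurrent geodesic inside $D_{P_i}$, or one asymptotic to a boundary saddle connection — is what really needs the explicit cone description of Subsection 3.2 and the absence of room for recurrence on an annulus carrying a fixed-point-free geodesic flow with no closed orbit. The remaining ingredients — convexity of the core, the reduction to $k=1$, and the application of Strebel's decomposition on the compact piece $core(X)$ — are immediate or standard.
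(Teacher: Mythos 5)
The decisive sub-step you yourself flag is where the argument fails to close, and it is a genuine gap. As written the paragraph is internally inconsistent: from recurrence you must first extract (which you do not say explicitly) that the ray avoids some neighborhood $V$ of $P_i$ with convex complement, hence is trapped in the compact region $D_{P_i}\setminus V$; but then you conclude that a Poincar\'e--Bendixson argument ``forces the ray to escape towards $P_i$'', which is exactly what the trapping forbids. What you actually need is a contradiction (a non-closed constant-direction ray cannot stay forever in a compact, singularity-free flat region), not convergence to the pole. Moreover the parenthetical justification (``no closed geodesic bounds a disk free of conical singularities'') does not address the real danger: when $b_i=k$ the domain of the pole contains infinitely many closed geodesics (the waist curves of the cylinder end), none of them null-homotopic there, so your claim ``a geodesic ray confined to $D_{P_i}$ must converge to the pole'' is false as stated, and a naive Poincar\'e--Bendixson statement only yields that the limit set of the trapped ray is such a closed orbit. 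To finish one must rule out spiraling onto a closed leaf, e.g.\ by observing that for the straight-line foliation in a fixed direction closed leaves come in open cylinders of parallel closed leaves (trivial holonomy), so an orbit accumulating on a closed geodesic is itself closed, contradicting recurrence. None of this is supplied, and it is precisely the content of the first assertion.

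The paper's proof avoids the whole issue. From Lemma 3.1 and non-convergence to any pole it deduces only that a recurrent trajectory stays in a compact set, and then takes the convex hull of the trajectory: this is a compact convex set whose boundary, being a limit of segments of the trajectory, is a finite union of saddle connections in the direction of the trajectory (finite because saddle-connection lengths are bounded below). This one construction gives both assertions simultaneously --- the parallel saddle connection and containment in the core --- with no case analysis inside pole domains and no appeal to Strebel's decomposition. Your second half is fine in spirit, but note that $core(X)$ is a surface with geodesic boundary in arbitrary directions, so the decomposition in direction $\theta$ has to be applied to the closure of the trajectory (as in the paper's Proposition 5.5) rather than quoted off the shelf for a closed surface; and your reduction to $k=1$ via the canonical cover, while consistent with the paper's Section 5, is not needed here since the convex-hull argument works directly for any $k$.
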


\begin{proof} Lemma 3.1 states that every pole of higher order has a basis of neighborhoods whose complement is convex. So a trajectory that enters in one of these neighborhoods never comes back. Therefore, if a pole of higher order is in the closure of trajectory, then this trajectory finishes at this pole. So, every recurrent trajectory lies in a compact set. Consequently, the convex hull of a recurrent trajectory is a convex compact whose boundary has finite perimeter. As lengths of saddle connections are bounded below by a positive number, the boundary is a finite union of saddle connections following the direction of the recurrent trajectory. Therefore, it belongs to the core.
\end{proof}

\begin{prop} In translation surfaces with poles, regular closed geodesics follow saddle connection directions.
\end{prop}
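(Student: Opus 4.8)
The plan is to realize the given regular closed geodesic as the core curve of a maximal flat cylinder and to read off the required saddle connection from the boundary of that cylinder.

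First I would fix a regular closed geodesic $\gamma$, let $\theta$ be its direction, and invoke the standard cylinder mechanism for flat surfaces. Since $\gamma$ meets only regular points, the first-return map of the flow in direction $\theta$ to a short transversal through a point of $\gamma$ is a translation, so every trajectory parallel to $\gamma$ and close enough to it is again a closed geodesic of the same length $\ell$ until it hits a conical singularity. Taking the union $C$ of all closed geodesics of direction $\theta$ reached from $\gamma$ in this way, $C$ is a maximal flat cylinder, isometric to an open flat cylinder of circumference $\ell$ that may be finite, semi-infinite or bi-infinite.

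Then I would analyse the two sides $\partial_{1},\partial_{2}$ of $\overline{C}\setminus C$. The crucial step is the claim that if $\partial_{i}$ is nonempty then it contains a conical singularity. As a limit of closed geodesics of direction $\theta$, away from $\Lambda$ the set $\partial_{i}$ is a union of geodesic segments of direction $\theta$; were it to avoid $\Lambda$ entirely it would be a smooth closed geodesic through regular points only, and running the cylinder mechanism on the side of $\partial_{i}$ opposite to $C$ would enlarge $C$, contradicting maximality. Hence $\partial_{i}$ passes through $\Lambda$, and cutting it there decomposes it into saddle connections, all of direction $\theta$; this is exactly the assertion of the proposition. It remains to exclude the case $\partial_{1}=\partial_{2}=\emptyset$: then $C$ is open and closed, so $X=C$ is a bi-infinite flat cylinder, each end of which is capped by a pole whose neighborhood is foliated by closed geodesics up to the pole. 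By the local models recalled in Subsection 3.2, for $k=1$ such neighborhoods are precisely the semi-infinite cylinders at simple poles (poles of order $b\geq2$ being cone-like), so $X\in\mathcal{H}^{1}(-1,-1)$, which has no conical singularity and is ruled out by the standing assumption $n>0$ (and where the statement is in any case vacuous).

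The step I expect to be the main obstacle is making the maximality argument fully rigorous: one has to be certain that pushing $C$ past a singularity-free boundary component stays inside the family of closed geodesics parallel to $\gamma$ (this uses the translation structure of the first-return map, hence the triviality of the holonomy of a meromorphic $1$-form), and that the possibly infinite ends of $C$ are genuinely semi-infinite cylinders at simple poles, which is handled by the explicit descriptions of Subsection 3.2 rather than by the compactness argument used for recurrent trajectories in Propositions 5.2 and 5.3. The remaining bookkeeping is routine.
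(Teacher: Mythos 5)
Your argument is correct and follows essentially the same route as the paper: you realize the closed geodesic as the core of a maximal cylinder whose ends are either chains of parallel saddle connections or semi-infinite ends at simple poles, and you rule out the cylinder bounded by two simple poles (the sphere in $\mathcal{H}^{1}(-1,-1)$) using the standing assumption $n>0$, exactly as in the paper's proof. The only difference is that you spell out the cylinder mechanism and the boundary analysis that the paper asserts in one line, which is a matter of detail rather than of approach.
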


\begin{proof} A closed geodesic describes a cylinder bounded by either a simple pole and a chain of colinear saddle connections or two chains of saddle connections. A cylinder is not bounded by two simple poles because it would be a Riemann sphere without marked points and two simple poles. Yet, we assumed that the set of conical singularities is nonempty. This is a contradiction.
\end{proof}

\begin{cor} Lebesgue-almost every trajectory that passes through a given point $x \in X$ finishes at a pole.
\end{cor}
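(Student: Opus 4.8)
The plan is to parametrize the trajectories through $x$ by their direction and to show that the set of directions whose trajectory is \emph{not} of the type converging to a pole of higher order has Lebesgue measure zero. By passing to the canonical $k$-cover $(\widetilde{X},\omega)$, which is a local isometry near the preimages of $x$ and carries poles of higher order to poles of higher order, the statement reduces to the case $k=1$; so fix a regular point $x$, and note that a trajectory through $x$ is then determined by its direction $\theta$ in the circle $S^{1}$, which we endow with Lebesgue measure.

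By Definition 5.1 every such trajectory is a regular closed geodesic, a critical trajectory, a trajectory converging to a pole of higher order, or a recurrent trajectory, so it suffices to show that the directions giving a trajectory of the first, second, or fourth type together form a null set. For closed geodesics and recurrent trajectories I would invoke the two preceding propositions: by Proposition 5.3 a regular closed geodesic follows the direction of a saddle connection, and by Proposition 5.2 so does a recurrent trajectory. Hence the directions of these two types all belong to the set $D$ of directions of saddle connections of $(X,\phi)$. Now $D$ is countable: by convexity of the core every saddle connection lies in $core(X)$, which is compact, and in a compact flat region there are, for each $L>0$, only finitely many geodesic segments of length at most $L$ joining two prescribed points; together with the positive lower bound on the length of a saddle connection this gives countably many saddle connections, hence countably many directions. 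So $D$ is null.

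It remains to bound the critical trajectories issuing from $x$, that is the geodesic segments from $x$ to a conical singularity. For each of the finitely many conical singularities $p$ and each integer $L$ there are, again by the same compactness argument --- a segment of length at most $L$ from $x$ to $p$ cannot enter a sufficiently small neighborhood of a pole and leave it again, by Lemma 3.1, hence stays in a fixed compact subsurface --- only finitely many such segments. Taking the union over $p$ and over $L\in\mathbb{N}$, the set of critical directions from $x$ is countable, hence null. Combining the three null sets, the set of directions $\theta$ for which the trajectory through $x$ does not converge to a pole of higher order is null, which is the assertion.

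The main point to treat carefully is the finiteness statement underlying these countability claims: for a fixed $L$, there are only finitely many geodesic segments of length at most $L$ between two prescribed points of the compact core (or joining $x$ to a conical singularity). This is the familiar flat-geometry counting estimate, and the role of the hypotheses is precisely to confine all the relevant segments to a compact subsurface --- through compactness of $core(X)$ and the fact (Lemma 3.1) that a trajectory entering an appropriate pole neighborhood never returns.
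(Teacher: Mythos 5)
Your proposal is correct and follows essentially the same route as the paper: parametrize trajectories through $x$ by direction, use Propositions 5.2 and 5.3 to put closed and recurrent trajectories into the (countable) set of saddle connection directions, and note that critical directions from $x$ are likewise countable, so almost every direction yields a trajectory converging to a pole. The extra details you supply (reduction to $k=1$ via the canonical cover and the compactness/length-bound argument for countability) only flesh out what the paper leaves implicit.
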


\begin{proof} Recurrent trajectories and closed geodesics follow directions of saddle connections (Propositions 5.2 and 5.3). Directions of saddle connections are countable. Directions of trajectories passing through $x$ and finishing at a zero are countable too.\newline
\end{proof}

\subsection{Decomposition into invariant components}

For classical translation surfaces, invariant components for the vertical flow are cylinders and minimal components \cite{Ma}. Proposition 5.5 shows that the picture is a little more complicated for translation surfaces with poles.

\begin{prop} Let $(X,\omega)$ be a translation surface with poles ($\omega$ is a meromorphic $1$-form). Cutting along all saddle connections sharing a given direction $\theta$, we obtain finitely many connected components called \textit{invariant components}. 
There are four types of invariant components: \newline
- \textbf{finite volume cylinders} where the leaves are periodic with the same period, \newline
- \textbf{minimal components} of finite volume where the foliation is minimal and whose dynamics are given by a nontrivial interval exchange map, \newline
- \textbf{infinite volume cylinders} bounding a simple pole and where the leaves are periodic with the same period, \newline
- \textbf{free components} of infinite volume where generic leaves go from a pole to another or return to the same pole.\newline
\newline
Finite volume components belong to $core(X)$.\newline
The number of infinite volume cylinders is bounded by the number of simple poles.\newline
The number of free components is bounded by the total number of poles.
\end{prop}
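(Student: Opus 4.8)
The plan is to cut along the (finitely many) saddle connections in direction $\theta$, locate the resulting components relative to $core(X)$, and then read off the dynamics of the vertical foliation $\mathcal F_\theta$ inside each one. First I would check finiteness: since the holonomy of $\omega$ is trivial, two geodesics in direction $\theta$ cannot cross transversally, so the saddle connections in direction $\theta$ form a geodesic arc system; completing it to a maximal geodesic arc system and applying Lemma 4.10 (an MGAS triangulates the compact surface $core(X)$, hence is finite) shows there are finitely many of them, so cutting gives finitely many components. Moreover each domain of pole $D_i$ (there are $p$ of them by Lemma 4.5) is connected and disjoint from every saddle connection, because saddle connections lie in the convex set $core(X)$; hence each $D_i$ is contained in a single component. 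Therefore a component $Y$ either contains no $D_i$, in which case $Y\subseteq core(X)$ and $Y$ has finite area, or it contains some $D_i$, in which case $Y$ contains at least one pole and has infinite area. This already yields the assertion that finite area components lie in $core(X)$, and it identifies the infinite area components as exactly those containing a pole.

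For a finite area component $Y$, the closure $\overline Y\subseteq core(X)$ is compact with geodesic boundary made of saddle connections in direction $\theta$, and $\mathcal F_\theta$ is nonsingular in the interior of $Y$. I would invoke Poincar\'e recurrence for the area-preserving directional flow: either almost every leaf of $Y$ is closed, or a positive measure set of leaves is recurrent and non-closed. In the first case the maximal flat cylinders spanned by closed leaves cover $Y$ up to measure zero, and since their mutual boundaries consist of saddle connections in direction $\theta$ — all of which were cut — there can be only one, so $Y$ is a single finite area cylinder. In the second case, pick a recurrent leaf; it is dense in an open subset of $Y$ whose topological boundary consists of leaves through singularities, i.e. saddle connections in direction $\theta$; since those lie on $\partial Y$, this open set is all of $Y$, so $\mathcal F_\theta$ is minimal on $Y$ and the first-return map to a transverse segment is a nontrivial interval exchange. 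No mixed behaviour survives, which is exactly the point of cutting along \emph{every} saddle connection in the direction.

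For an infinite area component $Y$, it contains a pole. If $Y$ contains a simple pole whose semi-infinite cylinder neighbourhood has core direction $\theta$, then (as in the proof of Proposition 5.3) the boundary of that neighbourhood is a chain of colinear saddle connections in direction $\theta$ which separates it off, so $Y$ is exactly this infinite area cylinder; it cannot contain a higher order pole, since by Lemma 3.1 the leaves entering such a pole's neighbourhood never return, contradicting that every leaf of $Y$ is closed, and it cannot contain a second simple pole, since then $Y$ would be a bi-infinite cylinder with empty boundary, forcing $Y=X$ and contradicting $\Lambda\neq\emptyset$. If $Y$ contains no such simple pole, I claim the generic leaf goes to a pole: a positive measure set of closed leaves would again make the maximal cylinder all of $Y$ (the previous case), while a recurrent leaf would, by Proposition 5.2, have compact closure contained in $core(X)$ and bounded by saddle connections in direction $\theta$, hence equal to $Y$, contradicting infinite area. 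Thus $Y$ is a free component, with generic leaves running from one pole to a pole (possibly the same). Finally, for the bounds: distinct infinite area components contain disjoint nonempty sets of poles, so there are at most $p$ of them; each infinite area cylinder contains exactly one pole and it is simple, so the number of infinite area cylinders is at most the number of simple poles; and the free components, being the remaining infinite area components, number at most $p$.

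The main obstacle is this dynamical trichotomy inside a single component: proving that after removing all saddle connections in direction $\theta$ the foliation on each piece is either entirely periodic (a cylinder), minimal, or has all generic leaves escaping to poles, with nothing in between. This rests on combining Poincar\'e recurrence (finite area case) and Lemma 3.1 together with Proposition 5.2 (infinite area case) with the fact that \emph{all} $\theta$-saddle connections have been cut; one must also verify that a separatrix issuing from a boundary conical singularity into the interior of a component is never a saddle connection (it would have been cut), so that it too falls into this trichotomy rather than producing an isolated exceptional singular leaf.
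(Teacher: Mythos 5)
Your proposal is correct and takes essentially the same route as the paper's proof: finitely many saddle connections in direction $\theta$, cylinders bounded by chains of $\theta$-saddle connections or by a simple pole, recurrent leaves whose closures are bounded by $\theta$-saddle connections (hence minimality after the cut), and the remaining infinite-area components having generic leaves running from pole to pole, with the pole count giving the bounds. You only make explicit what the paper leaves implicit (Poincar\'e recurrence, the location of pole domains relative to $core(X)$, the separatrix subtlety); one cosmetic point is that the triangulation statement you invoke is Lemma 4.8, while Lemma 4.10 is the edge-count formula.
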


\begin{proof} There are finitely many saddle connections in a given direction so we get finitely many connected components. Periodic geodesics describe cylinders bounded by either a simple pole and a chain of colinear saddle connections or two chains of saddle connections. In a finite volume component different from a cylinder, the generic leaf is recurrent in two directions. The closure of any recurrent leaf is bounded by vertical saddle connections. Therefore, the foliation is minimal in any invariant component that contains a recurrent leaf. Thus the remaining components have infinite volume, where leaves go from a pole to another (or possibly return to the same pole) or to a zero for finitely many leaves.\newline
There is at least one pole in each infinite volume component and there is a unique simple pole at the end of each infinite volume cylinder.
\end{proof}

\begin{ex} In $\mathcal{H}^{1}(1^{2},-1^{2})$, some surfaces have minimal components, see Figure 9.

\begin{figure}
\includegraphics[scale=0.3]{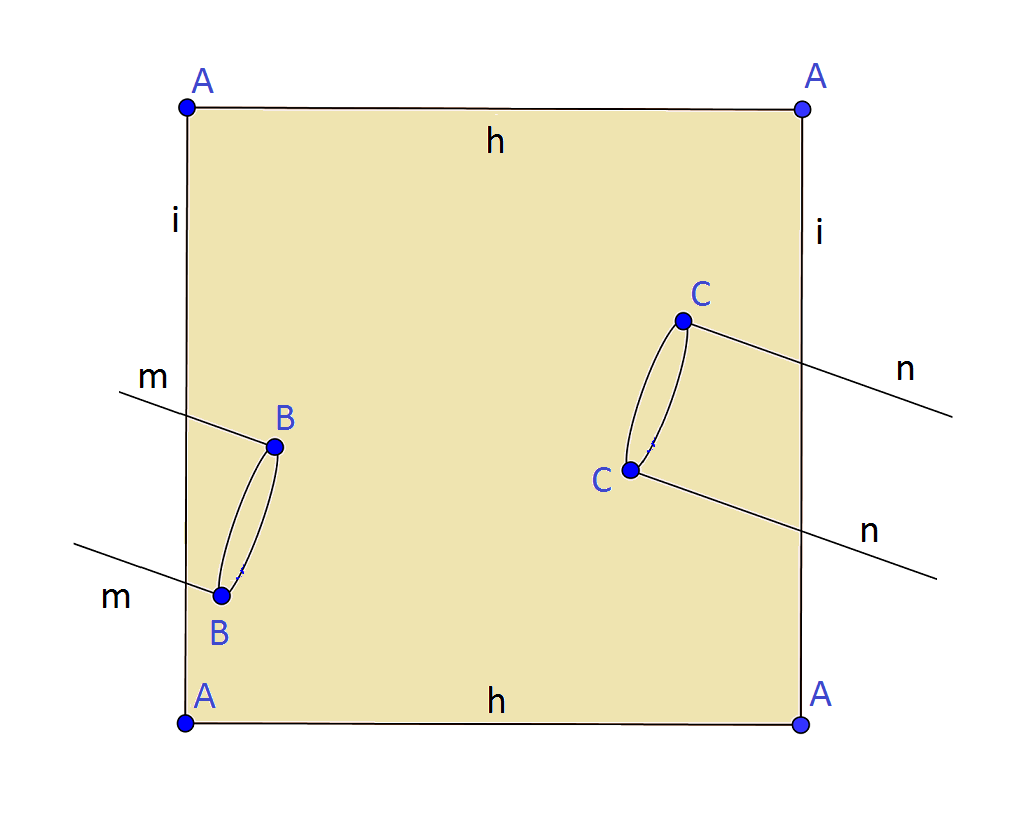}
\caption{This surface is obtained by pasting two infinite cylinders along a pair of slits following an irrational leaf of a flat torus}
\end{figure}

\end{ex}

Combinatorial considerations will give sharp bounds on the number of finite volume components but for now, we finish up with dynamical preliminary results.\newline

\subsection{Distribution of directions}

\begin{prop} For any translation surface with poles $(X,\omega)$ and $x \in X\setminus \Delta$, the set of directions $\theta$ such that the trajectory beginning in x along the $\theta$ direction finishes in a given pole $p_i$ is open (possibly empty).
\end{prop}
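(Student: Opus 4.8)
The plan is to show openness by exhibiting, for each starting point $x$ and each direction $\theta_0$ whose trajectory converges to a fixed pole $p_i$, an entire interval of directions around $\theta_0$ whose trajectories also converge to $p_i$. First I would use the fact that a trajectory converging to a pole of higher order eventually enters a neighborhood of $p_i$ whose complement is convex (Lemma 3.1). So fix $\theta_0$ such that the trajectory $\gamma_{\theta_0}$ starting at $x$ converges to $p_i$. Pick a convex-complement neighborhood $V$ of $p_i$ from the basis given by Lemma 3.1; then $\gamma_{\theta_0}$ enters $V$ at some finite time $T_0$ and stays in $V$ forever afterward (any trajectory entering $V$ cannot leave, since leaving would require a geodesic segment from a point of $V$ to a point of $V^c \supseteq X\setminus V$ passing through $X\setminus V$, contradicting convexity of $X\setminus V$ — more precisely, once a geodesic is inside the convex region that is the complement of the core-side, it cannot return, as argued in the proof of Proposition 5.2).

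The key step is a compactness-plus-continuity argument on the initial segment. Consider the piece of $\gamma_{\theta_0}$ from $x$ up to time $T_0$, where it first enters $V$; this is a compact geodesic segment avoiding $\Delta$ and, since $\theta_0$ need not be assumed non-critical, I should choose $T_0$ slightly before the trajectory could possibly hit a conical singularity — but in fact if $\gamma_{\theta_0}$ converges to $p_i$ it is by definition not a critical trajectory, so the whole forward trajectory avoids $\Lambda$, and the initial segment $[x,\gamma_{\theta_0}(T_0)]$ avoids both $\Lambda$ and $\Delta$ and lies in a compact region. The geodesic flow on $X\setminus(\Lambda\cup\Delta)$ depends continuously (indeed smoothly, in the flat local coordinates) on the direction, uniformly on compact time intervals staying away from the singular set. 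Hence there is $\varepsilon>0$ such that for all $\theta$ with $|\theta-\theta_0|<\varepsilon$, the trajectory from $x$ in direction $\theta$ also stays away from $\Lambda\cup\Delta$ on $[0,T_0]$ and, at time $T_0$, has entered the open neighborhood $V$. Once inside $V$, by the convexity-of-complement property the trajectory can never leave $V$, and since this holds for every member of the neighborhood basis around $p_i$, the trajectory must converge to $p_i$.

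The main obstacle, and the only delicate point, is making rigorous the claim that a trajectory which has entered one of these distinguished neighborhoods $V$ of $p_i$ cannot escape and in fact converges to $p_i$ — i.e. that entering $V$ forces the pole to be $p_i$ and no other. For poles of order exactly $k$ (semi-infinite cylinders) this is immediate: once in the cylinder going the right way the trajectory runs off to the pole. For poles of higher order one uses that the complement of $V$ is convex so a geodesic entering $V$ cannot return to $V^c$; combined with the nested basis of such neighborhoods shrinking to $p_i$, the trajectory is trapped in arbitrarily small neighborhoods of $p_i$ and hence converges to it. I would also remark that the set in question may indeed be empty (if no trajectory from $x$ reaches $p_i$), which is why the statement allows it; and that the argument gives openness but not closedness, consistent with the later results on boundary directions being directions of saddle connections or of finite-volume components.
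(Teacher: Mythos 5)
Your proposal is correct and takes essentially the same approach as the paper: the paper's proof observes that a trajectory finishing at $p_i$ spends only finite time in $core(X)$ and cannot return after leaving it (the same convexity fact from Lemma 3.1 that you invoke), and then perturbs the direction using an $\varepsilon$-tubular neighborhood of the compact initial segment, which is your continuity-on-$[0,T_0]$ argument in different words. Your write-up is in fact more careful than the paper's on the final trapping step (why entering the distinguished neighborhood forces convergence to $p_i$ itself), which the paper leaves implicit.
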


\begin{proof} A trajectory finishing at a pole stays at most a finite time if any in the core. Besides, it cannot return back to the core after leaving it. So $\exists \varepsilon > 0$ such that there is no conical singularity in a $\varepsilon$-tubular neighborhood of the trajectory. Therefore, we can slightly vary the direction while conserving a trajectory finishing at the same pole.
\end{proof}

\begin{cor}
The set of directions going to each pole is measurable so there is a function that assigns to every regular point a probability measure on the set of poles corresponding to the measure of the set of directions going to each pole.
\end{cor}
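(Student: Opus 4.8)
The plan is to derive the statement directly from Proposition 5.8 and Corollary 5.4. First I would fix a regular point $x$ and identify the space of oriented directions at $x$ with the circle $S^{1} = \mathbb{R}/2\pi\mathbb{Z}$, endowed with its normalized Lebesgue (arc-length) probability measure $\lambda$. Recall that for $k=1$ every pole is a pole of higher order, so $\Delta$ is a finite set, say $\Delta = \{p_{1},\dots,p_{N}\}$. For each $i$, let $D_{i}(x) \subset S^{1}$ be the set of directions $\theta$ such that the trajectory issued from $x$ in direction $\theta$ converges to $p_{i}$. By Proposition 5.8 each $D_{i}(x)$ is open, hence Borel measurable; moreover the sets $D_{i}(x)$ are pairwise disjoint, since a trajectory converging to a pole converges to exactly one of them.

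The second step is to check that $\bigsqcup_{i=1}^{N} D_{i}(x)$ has full $\lambda$-measure in $S^{1}$. A direction not lying in this union is, according to the classification in Definition 5.1, the direction of a closed geodesic, of a recurrent trajectory, or of a critical trajectory ending at a zero. By Propositions 5.2 and 5.3 the first two cases force $\theta$ to be the direction of a saddle connection, and there are only countably many such directions; likewise there are only countably many directions of critical trajectories joining $x$ to a zero. This is precisely the count carried out in the proof of Corollary 5.4, so $\lambda\bigl(S^{1}\setminus\bigsqcup_{i} D_{i}(x)\bigr)=0$ and hence $\sum_{i=1}^{N}\lambda(D_{i}(x))=1$.

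Finally I would define the measure $\mu_{x}$ on the finite set $\Delta$ by $\mu_{x}(\{p_{i}\})=\lambda(D_{i}(x))$. By the first step this is well defined and nonnegative, and by the second step its total mass is $1$, so $\mu_{x}$ is a probability measure on the set of poles, and the assignment $x\mapsto\mu_{x}$ is the desired function. Measurability of the sets $D_{i}(x)$ is immediate from their openness, so the only step with genuine content is the full-measure assertion of the second paragraph; since this is exactly Corollary 5.4, already proved, I do not anticipate any real obstacle — the corollary is essentially a repackaging of Proposition 5.8 together with Corollary 5.4.
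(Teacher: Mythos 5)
Your proof is correct and is exactly the argument the paper intends (the corollary is stated without proof as an immediate consequence of the openness statement, which is Proposition 5.7 in the paper's numbering, together with Corollary 5.4): openness of each set of directions gives Borel measurability, Corollary 5.4 gives that their union has full Lebesgue measure, and the induced weights define the probability measure on the finite set of poles. The only blemish is the harmless citation slip (Proposition 5.7, not 5.8).
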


\begin{rem}
For a point of a translation surface with poles, the set of directions defining trajectories going to a given pole can be an infinite union of disjoints intervals, see Figure 10.
\end{rem}

\begin{figure}
\includegraphics[scale=0.3]{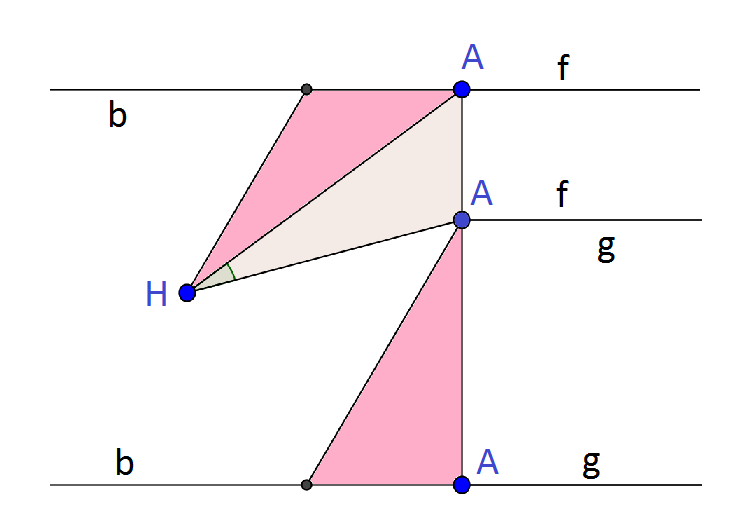}
\caption{In this surface in $\mathcal{H}^{1}(1,-1^{3})$, set of directions of trajectories going from a particular point to a given pole is an infinite union of disjoints intervals.}
\end{figure}

The set of directions of saddle connections can be deeply studied with the tools of descriptive set theory, see \cite{Au}. We shall remain on the surface and merely look at the accumulation points of this set in the unit circle. Proposition 5.10 shows that they correspond to finite volume cylinders and minimal components.

\begin{prop} Let $(X,\phi)$ be a flat surface with poles of higher order. Then the directions of the waist curves of finite volume cylinders and the directions of minimal components of $(X,\phi)$ are exactly the accumulation points of directions of saddle connections of $(X,\phi)$ considered as points of the unit circle (up to a rotation of angle $\dfrac{2\pi}{k}$). This holds for any $k$-differential with $k \geq 1$.
\end{prop}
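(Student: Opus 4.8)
The plan is to reduce first to the case $k=1$ by passing to the canonical $k$-cover $(\widetilde X,\omega)$, which is a translation surface with poles. Every saddle connection of $(X,\phi)$ has $k$ preimages that are saddle connections of $(\widetilde X,\omega)$, the preimage of a finite-volume cylinder (resp. of a minimal component) of $(X,\phi)$ is a union of finite-volume cylinders (resp. of minimal components) of $(\widetilde X,\omega)$, and in all cases the direction upstairs determines the direction downstairs modulo $\tfrac{2\pi}{k}$, every such object downstairs being the image of one upstairs. Hence $\theta$ (taken modulo $\tfrac{2\pi}{k}$) is an accumulation point of directions of saddle connections of $(X,\phi)$ if and only if some lift of it is an accumulation point of directions of saddle connections of $(\widetilde X,\omega)$, and likewise for being a direction of a finite-volume cylinder or of a minimal component. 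So from now on I would assume $(X,\omega)$ is a translation surface with poles and prove the two implications separately.

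For the implication that cylinder and minimal directions are accumulation points, suppose first $\theta$ is the direction of a finite-volume cylinder $C$. Being a maximal family of parallel closed geodesics of finite area, $C$ has two boundary components, each a nonempty chain of saddle connections of direction $\theta$; choosing a conical singularity $A$ on one and $B$ on the other and writing $C$ in flat coordinates as $[0,h]\times(\mathbb R/w\mathbb Z)$ with the circle factor in direction $\theta$, the straight segment joining $A=(0,a)$ to $(h,b+jw)$ lies in $C$, meets no conical singularity in its interior (a cylinder has none), hence is a saddle connection, and its direction tends to $\theta$ through pairwise distinct values as $j\to\infty$. If instead $\theta$ is the direction of a minimal component $M$, then after cutting along all saddle connections of direction $\theta$ the flow on $M$ is minimal, so a separatrix of direction $\theta$ issuing from a conical singularity of $\overline M$ meets no conical singularity and is dense in $M$; picking conical singularities $A$ and $B$ on $\overline M$, the forward separatrix from $A$ returns arbitrarily close to $B$ along arbitrarily long arcs, and an arbitrarily small correction of the direction turns these arcs into genuine saddle connections from $A$ to $B$ with directions tending to $\theta$. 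In both cases $\theta$ is an accumulation point.

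For the converse, let $\theta=\lim\theta_n$ where the $\theta_n$ are pairwise distinct directions of saddle connections $\gamma_n$; after pigeonholing I may assume every $\gamma_n$ starts at a fixed conical singularity $A$. By convexity of the core every $\gamma_n$ lies in the compact set $core(X)$, and since in a compact flat surface only finitely many saddle connections have length below any fixed bound, the pairwise distinct $\gamma_n$ have unbounded lengths; after extraction $\ell(\gamma_n)\to\infty$. Parametrising $\gamma_n$ by arclength from $A$ and applying Arzel\`a--Ascoli together with a diagonal extraction over $[0,1],[0,2],\dots$, the arcs converge to a forward-infinite unit-speed trajectory $L$ of direction $\theta$; since each $\gamma_n|_{[0,T]}$ lies in the closed set $core(X)$ so does $L$, hence $L$ does not converge to a pole, so by (the proof of) Proposition 5.2 no pole lies in $\overline L$, the set $\overline L$ is compact, and the boundary of its convex hull is a finite union of saddle connections of direction $\theta$. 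Consequently $L$ is contained in a finite-volume invariant component of the direction-$\theta$ foliation, which by Proposition 5.5 is either a finite-volume cylinder or a minimal component, so $\theta$ is the direction of one of these. (Should $L$ pass through conical singularities, I would run this argument on an infinite sub-trajectory of $L$ avoiding them, or, if $L$ becomes an infinite concatenation of the finitely many $\theta$-saddle connections, observe that the closed chain it eventually loops around bounds a finite-volume cylinder of direction $\theta$.)

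The step I expect to be the main obstacle is precisely this last compactness argument: extracting from long, nearly parallel saddle connections a bona fide limiting trajectory of direction exactly $\theta$, confined to the compact core, and ruling out degenerate behaviour at conical singularities, so that the dynamical classification of Propositions 5.2 and 5.5 becomes applicable. Everything else — the passage to the canonical cover and the explicit cylinder and interval-exchange constructions in the first implication — is routine.
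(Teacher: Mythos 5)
Your reduction to $k=1$ via the canonical cover and your first implication (cylinder and minimal directions are accumulation points) are fine, and essentially match what the paper takes for granted. The problem is in the converse, exactly at the step you yourself single out, and it is worth noting that the paper avoids this step entirely: it argues by contraposition, taking a short transversal through the singularity $x_0$ from which saddle connections with directions $\theta_n\to\theta$ emanate, showing that if there is no finite-volume cylinder and no minimal component in direction $\theta$ then all trajectories in direction $\theta$ from nearby points of the transversal end at one and the same pole, and then invoking the openness of the set of directions ending at a given pole (Proposition 5.7) to contradict the accumulation of saddle-connection directions at $\theta$. No limit of long saddle connections is ever taken, so no degenerate limiting behaviour at cone points has to be analysed.

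In your argument, the Arzel\`a--Ascoli limit $L$ does the job when its forward tail avoids the conical singularities: the tail is then a genuine leaf trapped in the compact core, the proof of Proposition 5.2 bounds its convex hull by $\theta$-saddle connections, the invariant component containing it lies inside that hull and hence has finite volume, and Proposition 5.5 identifies it as a cylinder or a minimal component. But when $L$ meets conical singularities infinitely often, $L$ is an infinite concatenation of the finitely many $\theta$-saddle connections, and your parenthetical fix is an assertion rather than a proof, on two counts. First, eventual looping around a closed chain does not follow merely from the finiteness of $\theta$-saddle connections: the continuation of the limit path at a singularity is governed by which prong the approximating $\gamma_n$ pass along, so repetition of a saddle connection does not by itself force periodicity. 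Second, even granting a closed chain, such a chain need not bound a finite-volume cylinder: a closed chain of $\theta$-saddle connections may bound an infinite-volume cylinder over a simple pole, or free components, on both sides. To rule this out you would need the extra arguments you have not written, e.g.\ that at each vertex the limit makes angle exactly $\pi$ on the side along which the $\gamma_n$ pass, consistently along the chain, so that the chain carries a one-sided collar of closed $\theta$-geodesics, and that the resulting cylinder cannot be the infinite one because a saddle connection of direction different from $\theta$ cannot enter the interior of an infinite-volume $\theta$-cylinder (inside it would cross the closed leaves monotonically and run into the simple pole, never reaching a singularity). As written, then, the converse is incomplete; either carry out this analysis of the degenerate limit, or adopt the paper's shorter transversal-plus-openness argument. (Minor point: the ``arbitrarily small correction of the direction'' in your minimal-component construction is also left loose, though that step is standard and the paper asserts it in one sentence as well.)
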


\begin{proof} In a finite volume cylinder or in a minimal component along a given direction $\theta$, one can always find a sequence of saddle connections such that the sequence of their directions converges to $\theta$. So directions of finite volume cylinders and of minimal components are accumulation points of directions of saddle connections considered as points of the unit circle.\newline
If there are no finite volume cylinders nor minimal components in a given direction, without loss of generality, we suppose that the vertical direction is an accumulation direction of saddle connections. So, there is a conical singularity $x_{0}$ which is the origin of a family of saddle connections whose directions approch the vertical one. If we consider a small horizontal geodesic segment $I$ passing through $x_{0}$, every vertical trajectory starting from a point of this segment is a critical trajectory or a trajectory finishing at a pole, see Figure 11.\newline
\begin{figure}
\includegraphics[scale=0.3]{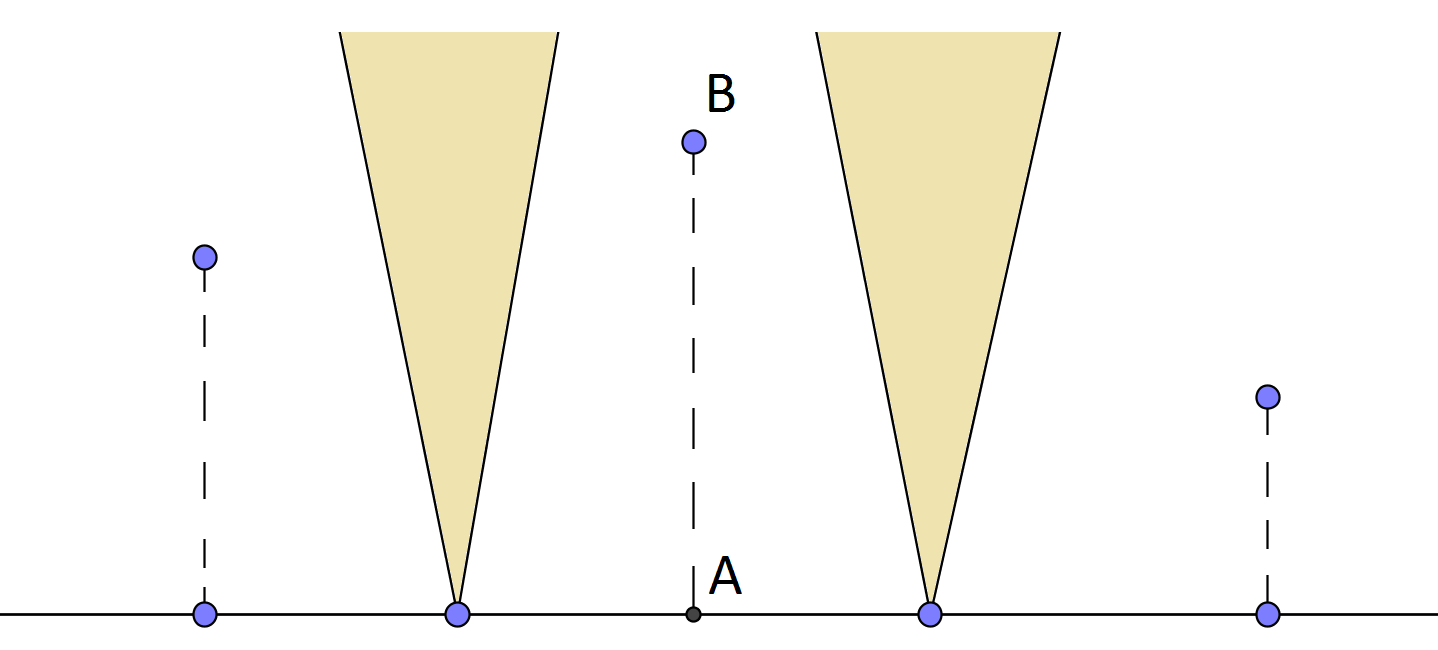}
\caption{Vertical trajectories starting from points of a horizontal geodesic segment go to a pole.}
\end{figure}
There is a finite number of critical trajectories in this segment because there is a finite number of zeroes and their opposite vertical trajectories leave the core after a finite time (there is a finite number of saddle connections in a given direction). So we can shorten our segment and get an open segment still containing $x_{0}$ such that all vertical trajectories starting from its points go to one and the same pole for points of the same side of the segment (left or right to $x_{0}$).\newline
Following Proposition 5.7, there is an open interval of directions around the vertical one such that trajectories go to this pole. So, a family of saddle connections approaching the vertical cannot exist.
\end{proof}

\begin{cor} Let $(X,\omega)$ be a translation surface with poles. The set of directions of saddle connections of $(X,\omega)$ in the unit circle is a closed proper subset.
\end{cor}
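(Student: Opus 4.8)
The plan is to read the corollary straight off Proposition 5.10. Write $D\subseteq S^{1}$ for the set of directions of saddle connections of $(X,\omega)$; since $k=1$ the linear holonomy is trivial, so $D$ genuinely lives in the unit circle $S^{1}=\mathbb{R}/2\pi\mathbb{Z}$, and there are just two things to establish: that $D$ is closed and that $D\neq S^{1}$.

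For closedness I would use the criterion that a subset of $S^{1}$ is closed if and only if it contains each of its accumulation points. Proposition 5.10 identifies the accumulation points of $D$ as exactly the directions of waist curves of finite volume cylinders of $(X,\omega)$ together with the directions of its minimal components, so it suffices to check that every such direction already lies in $D$. If $\theta$ is the direction of a finite volume cylinder $C$, then by the classification of invariant components in Proposition 5.5 the cylinder $C$ cannot bound a simple pole (that case has infinite volume), so $C$ is bounded on both sides by chains of saddle connections parallel to its waist curve, whence $\theta\in D$. If $\theta$ is the direction of a minimal component $M$, then again by Proposition 5.5 the component $M$ has finite volume and the closure of any recurrent leaf --- hence the whole boundary of $M$ --- is a union of saddle connections in direction $\theta$, so once more $\theta\in D$. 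If $(X,\omega)$ has neither finite volume cylinders nor minimal components, Proposition 5.10 says $D$ has no accumulation points at all. In every case $D$ contains its accumulation points and is therefore closed.

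For properness I would simply invoke countability: as already noted in the proof of Corollary 5.4, the set of directions of saddle connections of $(X,\omega)$ is countable (a saddle connection is determined by its pair of endpoints among the finitely many conical singularities and by its homotopy class rel endpoints in $X\setminus\Delta$, of which there are countably many), so $D$ is a countable subset of the uncountable circle $S^{1}$ and hence proper. The only part requiring genuine verification is the closedness step, and within it the single delicate point is that the boundary of a finite volume cylinder, respectively of a minimal component, really consists of saddle connections pointing in that component's own direction; both statements are contained in Proposition 5.5, and once they are granted the corollary follows with no further work.
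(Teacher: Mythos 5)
Your argument is correct and follows essentially the same route as the paper: the paper's proof also deduces closedness by combining Proposition 5.10 with the fact that directions of finite volume cylinders and of minimal components are themselves saddle connection directions (it cites Propositions 5.2 and 5.3, whose content you recover from Proposition 5.5), with properness left to the countability of saddle connection directions already noted in Corollary 5.4. Your write-up merely makes the properness step and the boundary structure of the invariant components more explicit than the paper does.
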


\begin{proof}
Following Propositions 5.2 and 5.3, directions of minimal components and of the waist curves of cylinders are directions of saddle connections. Therefore, the set of directions of saddle connections contains its accumulation points.\newline
\end{proof}

Using a theorem of Vorobets stated in \cite{Vo}, we are able to prove existence of a finite volume cylinder inside every minimal component. This will be crucial to relate infinity of the number of saddle connections and the combinatorics of the singularity pattern of the strata. A similar result has been proved by Aulicino in Lemma 4.4 of \cite{Au} in the context of slit translation surfaces with a quite different proof.

\begin{lem}
Let $X$ be a translation surface with or without poles. For a minimal component $\mathcal{U}$ in the vertical direction, there is a family of regular periodic geodesics that describe a finite volume cylinder inside $\mathcal{U}$.
\end{lem}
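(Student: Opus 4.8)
The plan is to reduce the assertion to a statement about closed translation surfaces, apply a theorem of Vorobets, and then localise the resulting cylinder inside $\mathcal{U}$.

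First I would record, using Proposition 5.5, that $\mathcal{U}$ is a finite-volume translation surface whose boundary $\partial\mathcal{U}$ is a finite union of vertical saddle connections and on which the vertical foliation is minimal with first-return dynamics a nontrivial interval exchange transformation. Re-identifying the vertical saddle connections of $\partial\mathcal{U}$ in pairs (adding marked points when necessary, or equivalently realising the interval exchange transformation together with its height data by a zippered-rectangle construction) produces a \emph{closed} translation surface $\widehat{\mathcal{U}}$ with still-minimal vertical foliation, inside which $\mathcal{U}$ sits as the complement $\widehat{\mathcal{U}}\setminus\Gamma$ of a finite graph $\Gamma$ made of vertical saddle connections; write $L$ for the maximal length of an edge of $\Gamma$. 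Minimality of the vertical foliation on $\widehat{\mathcal{U}}$ guarantees in particular that no cylinder of $\widehat{\mathcal{U}}$ has vertical core direction.

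Next I would invoke the theorem of Vorobets from \cite{Vo}: any translation surface of area $A$ in a fixed stratum contains a regular closed geodesic describing a finite-volume cylinder whose circumference is at most $c\sqrt{A}$, with $c$ depending only on the stratum; enlarging it to a maximal cylinder in the same direction, its height is bounded below by a positive constant $h_{0}$ depending only on the stratum and on $A=\mathrm{area}(\widehat{\mathcal{U}})$. The last step is to make this cylinder disjoint from $\Gamma$. Since every edge of $\Gamma$ is vertical while the core of any cylinder of $\widehat{\mathcal{U}}$ lies in a non-vertical direction, and since the two boundary chains of a maximal cylinder are straight and parallel to that non-vertical direction, an edge of $\Gamma$ that meets the cylinder must run from one boundary chain to the other, so its length is at least the height of the cylinder; hence a cylinder of height greater than $L$ is automatically disjoint from $\Gamma$ and its core curves give a finite-volume cylinder inside $\mathcal{U}$. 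To force $h_{0}>L$ I would flow by $g_{t}=\mathrm{diag}(e^{t},e^{-t})$: this fixes the vertical direction, multiplies the lengths of the edges of $\Gamma$ by $e^{-t}$, and preserves the area, so applying Vorobets' theorem to $g_{t}\widehat{\mathcal{U}}$ yields, for $t$ large, a cylinder of height $\geq h_{0}>e^{-t}L$, disjoint from $g_{t}\Gamma$; transporting it back by $g_{-t}$ produces the desired cylinder inside $\mathcal{U}$.

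I expect the delicate point to be exactly this localisation. If one only has Vorobets' theorem in the qualitative form that every finite-area translation surface carries a cylinder, then the lower bound $h_{0}$ for $g_{t}\widehat{\mathcal{U}}$ may degrade as $t\to\infty$, and one must distinguish according to whether the Teichmüller geodesic $\{g_{t}\widehat{\mathcal{U}}\}_{t\geq0}$ returns to a compact part of the moduli space — where a semicontinuity/compactness argument restores a uniform lower bound on the height of some cylinder — or is divergent, in which case $\widehat{\mathcal{U}}$ carries saddle connections whose directions converge to the vertical one and the cylinder must be built directly from a suitable pair of these, recognising via Lemma 3.6 a bounded component of trivial holonomy that they bound. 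This bifurcation, absent from Aulicino's treatment of Lemma 4.4 in \cite{Au}, is where the real work lies.
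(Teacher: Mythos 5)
Your proposal is correct and follows essentially the same route as the paper: glue the vertical boundary saddle connections of $\mathcal{U}$ to obtain a closed translation surface with marked vertical saddle connections, invoke Vorobets' uniform cylinder theorem for the stratum, apply the Teichm\"uller flow $g_{t}$ (which preserves area and shrinks the marked vertical edges) so that the guaranteed cylinder cannot meet them, and transport back by $g_{-t}$. The worry in your last paragraph is unnecessary: Theorem 1.3 of \cite{Vo} already gives constants $A(\mathcal{H})$ and $L(\mathcal{H})$ uniform over the stratum, which is precisely the form the paper uses, so no recurrent/divergent dichotomy is needed.
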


\begin{proof}
As the boundary of $\mathcal{U}$ is made of vertical saddle connections whose total holonomy is zero, we can glue the boundary components on each other while adding marked points if necessary or modifying the order of the conical singularities. We get a translation surface $Y$ without poles but with some marked vertical saddle connections (those that have been glued on each other). The translation surface $Y$ belongs to a stratum $\mathcal{H}$.\newline
Following Theorem 1.3 of Vorobets in \cite{Vo}, there are two constants $A(\mathcal{H})$ and $L(\mathcal{H})$ such that in every surface $M$ of $\mathcal{H}$, there is a cylinder whose area is at least $A(\mathcal{H})$ and whose length of waist curves is at most $L(\mathcal{H})$.\newline
We apply the Teichmüller flow in the vertical direction to $Y$ until the total length of the marked vertical saddle connections is smaller than $\dfrac{A(\mathcal{H})}{L(\mathcal{H})}$. We get a surface we denote by $Y^{1}$. The total length of the marked vertical saddle connections in $Y^{1}$ is smaller than $\dfrac{A(\mathcal{H})}{L(\mathcal{H})}$ whereas there is a cylinder whose height is larger than $\dfrac{A(\mathcal{H})}{L(\mathcal{H})}$. Therefore, a portion of the waist curves of the cylinder does not cross the marked vertical saddle connections. Existence of such a regular closed geodesic is an invariant property for the Teichmüller flow so there is a finite volume cylinder in $Y$ whose waist curves do not cross the marked vertical saddle connections. As $Y$ has been obtained from $\mathcal{U}$ by gluing the marked vertical saddle connections, there is a finite volume cylinder inside $\mathcal{U}$.\newline
\end{proof}

The previous lemma implies a simple characterization of surfaces with infinitely many saddle connections.

\begin{cor} A flat surface with poles of higher order $(X,\phi)$ of a stratum $\mathcal{H}$ has infinitely many saddle connections if and only if there is a finite volume cylinder in $X$. Besides, the locus in the stratum $\mathcal{H}$ where $|SC|=+\infty$ is an open set. 
\end{cor}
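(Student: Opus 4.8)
The plan is to prove the equivalence in both directions and then deduce openness from it; I treat the \emph{``if''} direction first. Suppose $(X,\phi)$ contains a finite volume cylinder $C$. By Proposition 5.5, $C$ lies in $core(X)$ and is bounded by two chains of saddle connections, each parallel to its waist curves; in particular each boundary chain contains a conical singularity, say $P$ on one chain and $Q$ on the other. Let $v\neq 0$ be the holonomy of a waist curve and fix a geodesic segment $w$ from $P$ to $Q$ whose interior lies in the open cylinder. Unrolling $C$ as a flat strip, for every integer $n$ there is a geodesic segment $\gamma_n$ from $P$ to $Q$ with holonomy $w+nv$ whose interior still lies in the open cylinder, hence meets no conical singularity; thus $\gamma_n$ is a saddle connection, and the $\gamma_n$ are pairwise distinct because their holonomies are. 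Therefore $|SC|=+\infty$.

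For the \emph{``only if''} direction I argue by contraposition. Assume $X$ has no finite volume cylinder. Then $X$ has no minimal component either: by Lemma 5.12 --- applied with the direction of such a component taken as the vertical one, and via the canonical $k$-cover when $k\geq 2$ --- every minimal component would contain a finite volume cylinder. By Proposition 5.10 the set of directions of saddle connections of $X$ then has no accumulation point in the unit circle, hence is finite; and in each of these finitely many directions there are only finitely many saddle connections, as already invoked in the proofs of Propositions 5.5 and 5.10 (all saddle connections lie in the compact set $core(X)$ by convexity of the core, and in a fixed direction they are pairwise non-crossing geodesic segments whose lengths are bounded below). Summing over the finitely many directions gives $|SC|<+\infty$.

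Finally, by the equivalence just proved, $\{(X,\phi)\in\mathcal{H}:|SC|=+\infty\}$ is exactly the locus of surfaces containing a finite volume cylinder, so it suffices to show this locus is open. If $X_0$ contains a finite volume cylinder $C$ with waist holonomy $v_0\neq 0$, bounded by saddle connections $\gamma_1,\dots,\gamma_r$, then by the argument of Proposition 3.3 there is a neighborhood $U$ of $X_0$ in $\mathcal{H}$ on which all the $\gamma_i$ persist and vary continuously; after shrinking $U$, the persisting saddle connections stay pairwise non-crossing and still close up into two chains bounding an embedded flat annulus without interior conical singularity, with nonzero waist holonomy and finite area --- i.e.\ a finite volume cylinder. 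Hence $|SC|=+\infty$ on all of $U$. The step I expect to demand the most care is this last one: checking that the cylinder itself, not merely its bounding saddle connections, survives the perturbation --- that the embedded-annulus topology between the two persisting boundary chains is preserved and that the resulting flat annulus is genuinely a finite volume cylinder. This is a routine continuity argument but must be written out; everything else reduces to Propositions 5.5, 5.10 and 3.3, Lemma 5.12, and elementary bookkeeping with holonomy vectors.
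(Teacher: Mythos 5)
Your proof of the equivalence is essentially the paper's: the ``only if'' direction is exactly the chain accumulation of directions (Proposition 5.10) $\Rightarrow$ minimal component or finite volume cylinder $\Rightarrow$ cylinder via Lemma 5.12, plus finiteness of saddle connections in each fixed direction, and the ``if'' direction is the explicit transversal construction that the paper simply asserts (``there is an infinite number of saddle connections in every finite volume cylinder''). Where you genuinely add something is the openness claim: the paper's own proof of the corollary never addresses it (and Proposition 3.3 alone only gives openness of $\{|SC|\geq N\}$ for finite $N$, not of $\{|SC|=+\infty\}$), so a persistence-of-cylinders argument like yours is indeed needed. One caution on that step: the inference ``embedded flat annulus with no interior singularity, nonzero waist holonomy and finite area, i.e.\ a finite volume cylinder'' is not valid as stated --- developing such an annulus, the two $v$-periodic boundary chains may interleave so that no closed geodesic in the core class fits inside, and then it is not a cylinder. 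What saves you is the smallness of the perturbation relative to the height $h_0$ of the original cylinder: the developed boundary chains move by less than, say, $h_0/2$, so the mid-height waist circle survives as a regular closed geodesic and bounds a (possibly thinner) finite volume cylinder. You flag this as the delicate point, which is the right instinct; just make sure the written-out version uses this quantitative control rather than the ``finite area $+$ nonzero holonomy'' shortcut.
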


\begin{proof}
Following Lemma 5.12, if there is a minimal component in some direction, there is a finite volume cylinder in the surface. Therefore, an infinite number of saddle connections implies existence of a finite volume cylinder. As there is an infinite number of saddle connections in every finite volume cylinder, the equivalence is proved.\newline
\end{proof}

\subsection{Contraction flow}

In the case $k=1\ or\ 2$, there is a construction of surfaces with a degenerate core in the closure of every $GL^{+}(2,\mathbb{R})$-orbit. Such a construction works essentially because translation surfaces with poles have infinite area. It must be noted there is no natural action of $GL^{+}(2,\mathbb{R})$ on strata of $k$-differentials for which $k\geq3$. Therefore, such a construction does not generalize to $k$-differentials for arbitrary $k$.\newline

\begin{defn} Let $\alpha$ and $\theta$ be two distinct directions. The contraction flow is the action of the semigroup of matrices $C^{t}_{\alpha,\theta}$ conjugated to $\begin{pmatrix} e^{-t} & 0 \\ 0 & 1 \end{pmatrix}$ such that $\alpha$ is the contracted direction and $\theta$ is the preserved direction.
\end{defn}

\begin{lem}
Let $(X,\phi)$ be a flat surface with poles of higher order such that $\phi$ is a $1$-form or a quadratic differential. Then, there is a surface $(X_{0},\phi_{0})$ in the $GL^{+}(2,\mathbb{R})$-orbit closure of $(X,\phi)$ inside the ambiant stratum such that all saddle connections of $(X_{0},\phi_{0})$ share the same direction and $core(X_{0})$ is degenerate.
\end{lem}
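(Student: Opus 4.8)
The plan is to run the contraction flow $C^{t}_{\alpha,\theta}$ for a well-chosen pair of directions $\alpha\neq\theta$ and to take the limit as $t\to+\infty$; this limiting surface will be $(X_{0},\phi_{0})$. In the case $k=2$ I would first lift to the canonical double cover $(\widetilde{X},\omega)$: the cover construction is $GL^{+}(2,\mathbb{R})$-equivariant and commutes with the deck involution, and saddle connections upstairs and downstairs correspond with the same direction modulo $\pi$, so it is enough to treat the case where $\phi$ is a $1$-form. By Corollary 5.11 the directions of saddle connections of $(X,\phi)$ form a closed proper subset of the circle, so its complement is open and dense; I would pick the contracted direction $\alpha$ in this complement and, in addition, so that $\alpha$ is not the direction of the (nonzero) residue of any of the finitely many poles of order exactly $k$. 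The preserved direction $\theta$ is then any direction distinct from $\alpha$.

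Writing a period $p\in\mathbb{C}\cong\mathbb{R}^{2}$ in the (generally non-orthogonal) basis $(e_{\alpha},e_{\theta})$ as $p=a\,e_{\alpha}+b\,e_{\theta}$, the flow acts by $C^{t}_{\alpha,\theta}p=e^{-t}a\,e_{\alpha}+b\,e_{\theta}\to b\,e_{\theta}$. Fixing period coordinates near $(X,\phi)$ (the periods of a basis of the relevant relative homology group, which are local coordinates on the stratum), every period coordinate of $C^{t}_{\alpha,\theta}(X)$ therefore converges as $t\to+\infty$, and the limiting period datum $P_{0}$ has all its coordinates equal to real multiples of the single vector $e_{\theta}$. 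The same computation shows that $core(C^{t}_{\alpha,\theta}X)=C^{t}_{\alpha,\theta}(core(X))$ has area $e^{-t}\,\mathrm{Area}(core(X))$, which tends to $0$: the finite-area pieces of Proposition 5.5 (the core-internal cylinders and minimal components) are being flattened onto the direction $\theta$, while the infinite-area domains of poles persist with nonzero waist curves.

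The hard part is to check that $P_{0}$ is realized by a genuine surface $(X_{0},\phi_{0})$ of the stratum $\mathcal{H}$, so that $(X_{0},\phi_{0})$ lies in $\overline{GL^{+}(2,\mathbb{R})\cdot(X,\phi)}\cap\mathcal{H}$. Since the period map is a local coordinate system, the only ways a period datum can fail to come from a surface of $\mathcal{H}$ are a vanishing residue at a pole of order exactly $k$, or a collision of conical singularities, that is, a relative period carried by a saddle connection going to zero. Both are excluded by our choice of $\alpha$: the residue at an order-$k$ pole, not being parallel to $\alpha$, converges to a nonzero vector; and the holonomy of any saddle connection $\gamma$, not being parallel to $\alpha$ either, satisfies $\mathrm{len}(C^{t}_{\alpha,\theta}\gamma)\to|b_{\gamma}\,e_{\theta}|>0$, so no saddle connection shrinks to zero and no two conical singularities merge. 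In geometric terms, along the flow each finite-volume component collapses to a chain of pairwise parallel saddle connections without merging cone points, while every infinite cylinder around a simple pole keeps a nonzero waist; this is the step that requires care, since infinitely many saddle connections may be present, but each of their lengths stays bounded below by its $\theta$-component.

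Finally I would read off the two conclusions from $(X_{0},\phi_{0})$. The holonomy of any saddle connection of $X_{0}$ is an integer combination of the period coordinates, hence a real multiple of $e_{\theta}$; so all saddle connections of $X_{0}$ are parallel to $\theta$. For the core: if $\mathcal{I}\mathcal{C}(X_{0})$ were nonempty then, by Lemma 4.8 (a maximal geodesic arc system triangulates the core into ideal triangles), it would contain an ideal triangle, an honest flat triangle of positive area whose three sides are saddle connections — impossible once all saddle connections are parallel. Hence $\mathcal{I}\mathcal{C}(X_{0})=\emptyset$ and $core(X_{0})$ is degenerate. In the case $k=2$ one descends from the double cover: parallelism of saddle connections passes to the quotient, and the core downstairs, being the image under the covering of the (equivariant) degenerate core upstairs, is degenerate as well.
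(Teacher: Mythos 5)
Your overall strategy is the paper's: run the contraction flow with the contracted direction $\alpha$ chosen outside the closed set of saddle-connection directions (Corollary 5.11), pass to the limit, and read off that all saddle connections of the limit are parallel and that the core is degenerate; your endgame via Lemma 4.8 (an ideal triangle cannot have three parallel sides) is fine and arguably cleaner than the paper's area argument, and your reduction of $k=2$ to $k=1$ through the canonical double cover is the same one-line remark the paper makes. The genuine gap is in the step you yourself flag as the hard part. The assertion that ``the only ways a period datum can fail to come from a surface of $\mathcal{H}$ are a vanishing residue at a pole of order exactly $k$ or a relative period carried by a saddle connection going to zero'' does not follow from the period map being a system of local coordinates: local coordinates describe a neighborhood of a surface already known to lie in the stratum, and say nothing about whether the limiting period datum of a divergent path is realized by any surface of the stratum (the flat structures could degenerate, for instance by pinching an absolute cycle, or simply fail to converge geometrically, even while every period coordinate converges). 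What you are invoking is a properness/compactness criterion for strata of meromorphic differentials, and that criterion is precisely the content of this step; as written it is asserted, not proved, and no reference is given.

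The paper closes exactly this step with a geometric construction you do not have: because the contracted direction carries no saddle connection, the foliation in that direction has no finite-volume invariant components (Propositions 5.2, 5.3 and 5.5), so generic leaves run from pole to pole and the surface decomposes into finitely many infinite strips separated by the critical leaves; this finite package of strips and gluing data determines the surface, the contraction flow acts on these finitely many parameters, and their limit is data of the same type, which assembles the limit surface $(X_{0},\phi_{0})$ explicitly inside the ambient stratum. If you prefer to keep the period-coordinate formulation, you would need (a) such a decomposition, or a Delaunay/thick-thin type argument, to produce the limit surface, and (b) a lower bound on saddle-connection lengths along the flow that is uniform over the possibly infinite set of saddle connections; pointwise positivity of the $\theta$-component is not enough as stated, though uniformity does follow from your own setup, since saddle-connection directions stay at a definite angle from $\alpha$ (the set of such directions is closed) and their lengths on $X$ are bounded below. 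Finally, your extra requirement that $\alpha$ avoid the residue directions of the order-$k$ poles is harmless but redundant: the boundary of the maximal cylinder around such a pole consists of saddle connections parallel to its waist curves, so those directions are already saddle-connection directions.
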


\begin{proof}
Let $(X,\omega)$ be a translation surface with poles. The set of directions of saddle connections of $(X,\omega)$ in the unit circle (Corollary 5.11) is a closed proper set. Let $\theta$ be a direction that is not a direction of saddle connection. As the lengths of saddle connections of $(X,\omega)$ are bounded below by a positive number, no saddle connection shrinks (we excluded directions of saddle connections for this purpose) and the orbit of $(X,\omega)$ by a contraction flow preserving the direction $\theta$ converges to a translation surface with poles $(X_{0},\omega_{0})$ in the ambient stratum.\newline
We consider geodesic foliation in direction $\theta$. It is is not the direction of a saddle connection (Proposition 5.2 and 5.3) so generic leaves go from a pole to another pole (or return to the same poles). They form strips separated by critical leaves. This decomposition characterizes the surface. We pass from a leaf to another following direction $\alpha$. As direction $\alpha$ is preserved by the contraction flow, the limit of the decomposition in strips in the orbit of $(X,\omega)$ is another decomposition corresponding to a genuine translation surface with poles $(X_{0},\omega_{0})$ in the ambient stratum.\newline
As the contraction flow reduces the area of the core exponentially, $core(X_{0})$ is degenerate and every saddle connection belongs to direction $\alpha$.\newline
This construction is compatible with the canonical cover of half-translation surfaces by translation surfaces. Therefore, the lemma also holds for meromorphic quadratic differentials with poles of higher order.\newline
\end{proof}

\begin{ex} In $\mathcal{H}^{1}(2,-1^{2})$, pushing a surface through the contraction flow along a generic direction, we get a surface with a degenerate core, see Figure 12.\newline
\begin{figure}
\includegraphics[scale=0.3]{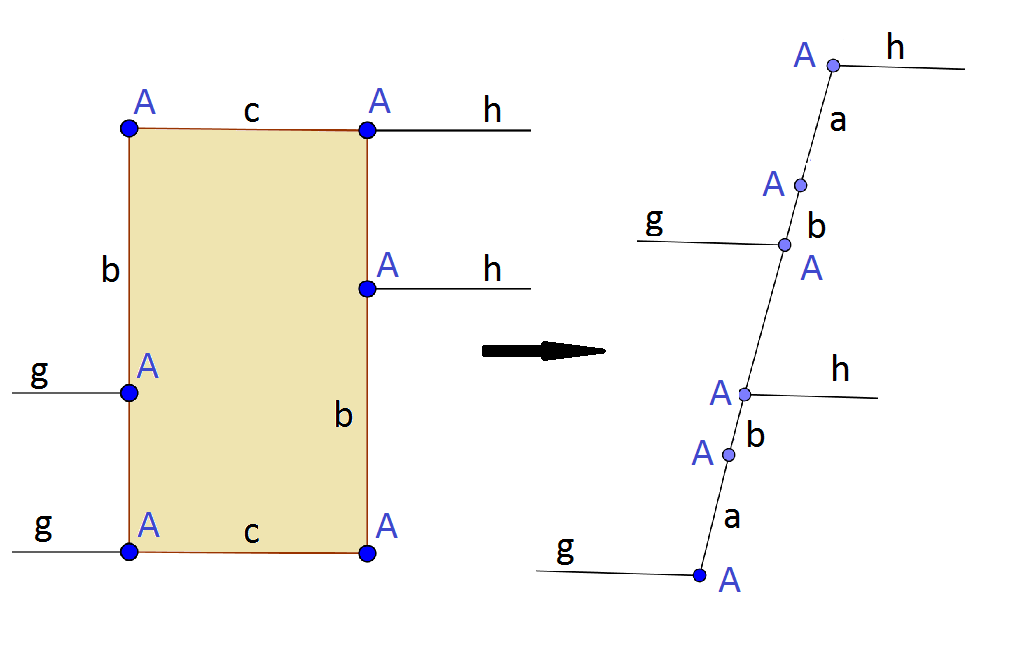}
\caption{A surface in $\mathcal{H}^{1}(2,-1^{2})$ and the surface obtained through the use of the contraction flow along a given direction}
\end{figure}
\end{ex}

\section{Reducibility index and invariant components}

Some complicated surfaces are constructed by connecting surfaces of genus zero with each other through cylinders. Such a possibility depends on the \textit{singularity pattern} of the stratum, that is the set with multiplicities encoding degrees of zeroes and poles of a stratum, see Figure 13.\newline
\textit{Graph representations} of the strata are the decomposition graphs allowed by the combinatorics of the singularity pattern, see Definition 2.1. In this section, we consider only the case of $1$-forms and quadratic differentials.\newline
\newline

\begin{figure}
\includegraphics[scale=0.3]{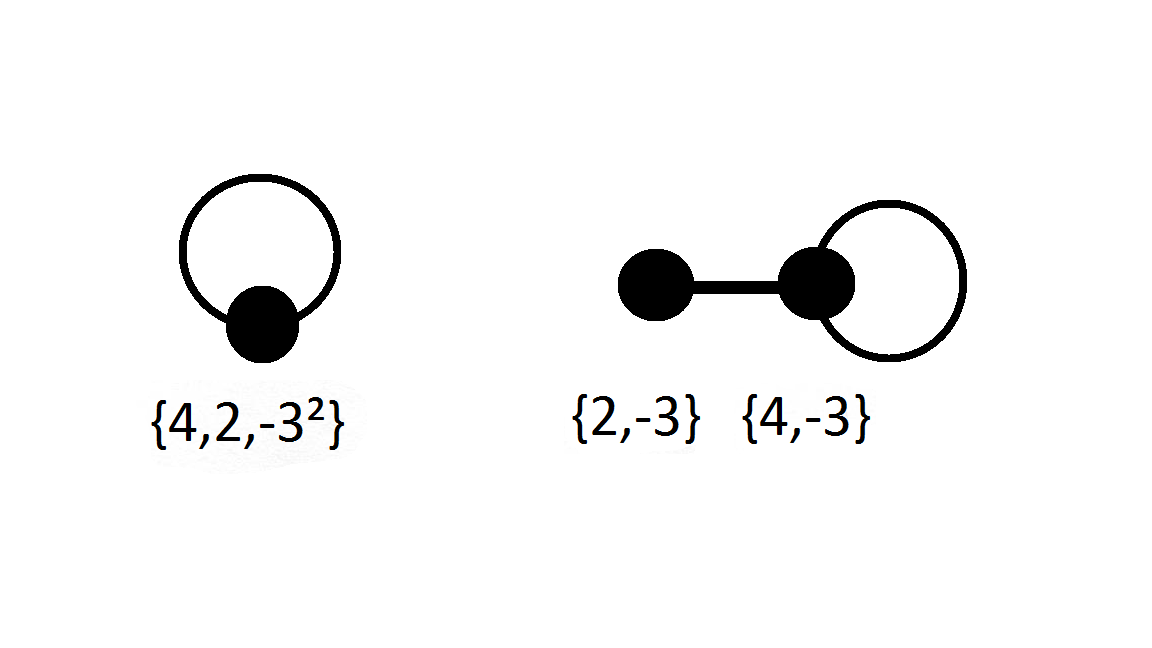}
\caption{The two pure graph representations of $\mathcal{H}^{1}(4,2,-3^{2})$.}
\end{figure}

Lemma 6.1 proves that cutting along cylinders actually realize graph representations that satisfy all the conditions.

\begin{lem}
Let $(X,\phi)$ be a surface in a stratum $\mathcal{H}=\mathcal{H}(a_1,\dots,a_n,-b_1,\dots,-b_p)$ and $F$ a family of finite volume cylinders of $X$ whose interiors are disjoints. Cutting along a waist curve in each cylinder of $F$ defines a graph representation $(G,f_0,\dots,f_s,w_0,\dots,w_s)$ of the stratum $\mathcal{H}$. Each vertex of graph $G$ corresponds to a connected component. Family $f_i$ is the set of singularities of $X$ that belong to the connected component corresponding to $i$ and $w_i$ is its genus. Each edge of graph $G$ is a cylinder of the family $F$. We say that $(X,F)$ realizes the graph representation $(G,f_0,\dots,f_s,w_0,\dots,w_s)$ of the stratum $\mathcal{H}$.\newline
\end{lem}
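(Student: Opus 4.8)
The statement already packages the combinatorial data $(G,f_0,\dots,f_s,w_0,\dots,w_s)$ that the geometry produces, so the task is purely to verify that these data satisfy the three conditions of Definition 2.1. First I would note that $F$ is necessarily finite: finite volume cylinders lie in $core(X)$ by Proposition 5.5, and $core(X)$ carries a flat triangulation with finitely many interior faces by Lemma 4.10, so only finitely many pairwise disjoint waist curves can be drawn in it. Hence the cut locus is a finite disjoint union of circles, $X$ is cut into finitely many components $Y_0,\dots,Y_s$ (the vertices of $G$), and the cylinders of $F$ are the edges --- a cylinder giving a loop when its two sides lie in the same component and multiple edges occurring when two cylinders join the same pair of components. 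The graph $G$ is connected because $X$ is.

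For condition (i) I would use that every cylinder $C\in F$, being of finite volume, lies in $core(X)$ and is bounded on each side by a nonempty chain of saddle connections (Propositions 5.3 and 5.5; in the quadratic case one reads this off the canonical double cover, where the chains may be folded at conical singularities of angle $\pi$). The conical endpoints of these saddle connections lie on $\partial C$, hence avoid the cut locus, which is contained in the interiors of the cylinders; so each such endpoint lies in one of the (at most two) components meeting $C$. Since $X$ is connected and $F$ is nonempty --- the case $F=\emptyset$ being trivial, as then the only component is $X$ itself and $\Lambda\neq\emptyset$ --- every component is adjacent to some cylinder and therefore contains a conical singularity, i.e.\ a number among $a_1,\dots,a_n$.

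For condition (ii), fix $Y_i$; its frontier in $X$ is a union of $v_i$ boundary circles, each being one side of a cut waist curve and hence a closed geodesic of nonzero holonomy. Capping each such circle by a semi-infinite flat cylinder of the matching circumference glues in, by the local models of Subsection 3.2, a pole of order exactly $k$, and produces a \emph{closed} flat surface $\widehat{Y_i}$ of genus $w_i$ whose $k$-differential has conical singularities and poles recorded by $f_i$ together with $v_i$ poles of order $k$. The degree identity on a closed surface, $\sum(\text{degrees})=k(2w_i-2)$, becomes $\sigma(i)-kv_i=2kw_i-2k$, which is exactly (ii); summing over the vertices and comparing with $\sum a_i-\sum b_j=k(2g-2)$ then yields $t=s+g-\sum w_i$ and $0\le\sum w_i\le g$, as recorded after Definition 2.1.

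For condition (iii), which only concerns $k=1$, suppose that deleting an edge $e$ disconnects $G$, and cut $X$ along the waist curve of the corresponding cylinder $C_e$ alone; this splits $X$ into two translation surfaces with poles $X_1,X_2$, each carrying a single geodesic boundary circle of nonzero holonomy. If $X_1$ had no pole of higher order it would carry a meromorphic $1$-form with no pole at all (since $k=1$), and capping its boundary circle with a semi-infinite cylinder would give a closed meromorphic $1$-form whose only pole is that simple pole, with nonzero residue --- impossible by the residue theorem. Hence $X_1$, and likewise $X_2$, contains a pole of higher order, which is exactly (iii). I expect the main obstacle to be the care needed in step (ii): one must make sure that every cut waist curve genuinely has nonzero (and purely translational) holonomy, so that a pole of order exactly $k$ can be inserted without changing the genus, and --- in the quadratic case --- that transporting the facts about cylinders through the canonical double cover does not perturb the degree count; once this is in place, the weight and genus identities are forced.
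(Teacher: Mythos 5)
Your proposal is correct and follows essentially the same route as the paper: condition (i) from the fact that each (maximal) finite-volume cylinder is bounded by chains of saddle connections, condition (ii) by viewing each cut end as a pole of order $k$ and applying the degree formula $\sum(\text{degrees})=k(2w_i-2)$, and condition (iii) by the residue theorem forcing a pole of higher order on each side of a separating waist curve. Your capping construction and the residue-theorem phrasing of (iii) are just a more explicit packaging of the paper's statement that the waist-curve holonomy is a period equal to the sum of residues on one side.
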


\begin{proof}
We prove that cutting along waist curves of cylinders of a family $F$ in a flat surface $(X,\phi)$ from a stratum $\mathcal{H}$, this defines a graph representation of $\mathcal{H}$.\newline
A finite volume cylinder is bounded at each end by a chain of saddle connections so there is a conical singularity in every connected component. If family $F$ is empty, we nevertheless have $n\geq1$ so there is a positive number in the singularity pattern of every connected component. Condition (i) is satisfied.\newline
The flat structure of each connected component is that of a flat surface whose singularities are those of the corresponding family, plus a pole of order $k$ for every adjacent cylinder that has been cut. Therefore, we have $\sigma(i)-k.v_i=k(2w_{i}-2)$ that is the formula of condition (ii).\newline
Condition (iii) is necessary only in the case $k=1$. If cutting an edge of the graph $G$ splits $G$ into two connected components, then cutting along a waist curve of the corresponding cylinder splits $X$ into two connected components.
If every pole belonged to the same connected component, the sum of the residues at the poles in this component would be zero and the holonomy vectors of waist curves of the cylinder would be zero. Indeed, holonomy vectors are given by periods of the differential. Therefore, condition (iii) is satisfied. This defines a graph representation of the stratum.\newline
\end{proof}

For every combinatorially admissible graph representation of a stratum, Theorem 2.3 shows there is a surface realizing it. The combinatorial obstruction is the only one. We split the proof in two parts depending if $k$ equals $1$ or $2$.

\begin{proof}[Proof of Theorem 2.3 in the case $k=1$]
Let $\mathcal{H}=\mathcal{H}^{1}(a_1,\dots,a_n,-b_1,\dots,-b_p)$ be a stratum of genus $g$ and $(G,f_0,\dots,f_s,w_0,\dots,w_s)$ be a graph representation of level $s$ of the stratum $\mathcal{H}$.\newline
For every vertex, we want to construct a translation surface with the same singularities as in family $f_i$ to which we add a number of simples poles equal to the valency $v_i$. Indeed, domains of simple poles are cylinders (like edges). The total order of the singularities of the surface will be $\sigma(i)-v_{i}=2w_{i}-2$. Therefore, it will be a translation surface of genus $w_i$.\newline
We must check that there is a sufficient number of singularities. In each family, there is at least one conical singularity so the only case we have to study is when $w_i=0$. If the valency is at least two, there are at least three singularities. If the valency is one, then the sum of the degrees of the family is $-1$ and there is another pole. Finally, if the valency is zero, then the sum of the degrees of the family is $-2$. Since by definition, the graph $G$ is connected, the original stratum corresponds to genus zero and there are at least three singularities.\newline
So, for every family $f_i$ of the graph representation, there is a genus $w_i$ stratum in which we can pick the adequate component for our decomposition. We will see later how we can get a surface with prescribed residues at the poles in the stratum corresponding to every vertex.\newline

We construct the required surface by gluing cylinders along the boundaries of the domains of the added simple poles that correspond to valency. If the residues of the poles (the original poles and the additional simple poles) satisfy some mild conditions, Lemma 4.16 provides a systematic construction of each surface corresponding to a vertex.\newline
We have to check that the residues of each pole of each vertex are compatible with each other. There are two conditions:\newline
- the sum of the residues at the two ends of each of the $t$ cylinders corresponding to edges is zero.\newline
- the sum of the residues at the poles of the same vertex (including the simple poles corresponding to edges) sums to zero.\newline
Thus, the problem is reduced to solving a linear system of $t+s+1=2t+1+\sum \limits_{i=0}^s w_i -g$ equations with real coefficients in $2t+p$ indeterminates. Here, $t$ is the number of edges and $p$ is the number of poles.\newline

We first study the specific case where $g=\sum \limits_{i=0}^s w_i$ and $p=1$. Then, we have $t=s$. The genus of the graph is zero so cutting an edge splits the graph into two connected components. As we have $p=1$, the graph is the trivial one (one vertex and no edges). This graph representation is realized by every surface in the stratum if we choose the family of cylinders to be the empty one.\newline

Outside this specific case, the system is underdetermined. We are going to show that a solution with nonzero values for every $t$-variable is always permitted.\newline
The solutions of the system form a linear subspace $S$ of $\mathbb{C}^{2t+p}$. For a given variable, either it is zero for every solution of $S$ or is nonzero outside a strict subspace of $S$. Therefore, in a generic solution of $S$, the only variables that have a null value are those that are zero for every solution of $S$. For every $t$-variable, we will construct specific solutions where this variable is nonzero in order to prove that all $t$-variables are nonzero for the generic solution.\newline

We distinguish three kinds of $t$-variables:\newline
(i) those associated to an edge connecting a vertex to itself (a loop)\newline
(ii) those associated to an edge connecting two distinct vertices and whose cutting splits the graph into two connected components\newline
(iii) those associated to an edge connecting two distinct vertices and whose cutting does not split the graph into two connected components.\newline

The value of a $t$-variable of the first kind has no impact on the total residue of its vertex so we can always choose a nontrivial value.\newline
For $t$-variables of the second kind, there is at least one pole in each component so a generic choice of residues of poles gives a nonzero value for the $t$-variable.\newline
In the third case, the edge associated to the $t$-variable belongs to a cycle. We can put to zero all variables excepted those corresponding to edges of the cycle and then pick nontrivial values for the latter.\newline
In conclusion, for every graph representation of any stratum, $t$-variables are nonzero for a generic solution of the equations defined by the graph.\newline

We then show that we can always construct surfaces with prescribed residues for strata corresponding to vertices.\newline
There is a projection of vector space $S$ of solutions to the space of $p$-variables and $t$-variables of every vertex. For a given vertex, if the dimension of the image of the projection is at least two, then a generic solution in $S$ assigns to $p$-variables and $t$-variables of this vertex values that generate $\mathbb{C}$ as a $\mathbb{R}$-vector space. Therefore, we can use Lemma 4.16 in order to get a surface with prescribed residues at the poles in the stratum.\newline
Then, we study the case where the image of the projection is one-dimensional. Removing the vertex splits the graph into $l$ connected components. We have $l=1$ because otherwise there would have been two independent parameters to prescribe residues to the surface corresponding to the vertex. $t$-variables corresponding to the connection of the vertex with connected components satisfy a unique relation. Therefore, the vertex is connected to its complement by a unique simple pole or a pair of simple poles whose residues are opposed to each other (otherwise, there would be more than one parameter). In conclusion, there are two cases:\newline
- there is no $p$-variable that is generically nontrivial and the vertex is connected to its complement by a pair of simple poles ($t$-variables) whose residue are opposed.\newline
- there is a unique $p$-variable generically nontrivial and the vertex is connected to its complement by a unique simple pole.\newline
These two cases satisfy the hypothesis of Lemma 4.16.\newline

In conclusion, for a generic solution in $S$, we can construct surfaces corresponding to every vertex. Therefore, for every graph representation of any stratum we can construct a flat surface that realizes it.\newline

In order to get parallel cylinders, we choose a realization of the graph representation. We pick a direction $\theta$ that is not the direction of a saddle connection and another direction $\alpha$. We apply the contraction flow contracting direction $\theta$ and preserving direction $\alpha$ to every component of the surface (we exclude cylinders), see Definition 5.11 and Lemma 2.2. For every component, we get a surface whose residues at the poles belong to direction $\alpha$. Then, we paste cylinders to connect components. Thus, we obtain a translation surface with poles realizing the graph representation and cylinders corresponding to edges are parallel (in the direction $\alpha$). This ends the proof.\newline
\end{proof}

\begin{ex}
The singularity pattern of $\mathcal{H}(1^{4},-1^{4})$ can be splitted into four families that can be arranged to form a cyclic graph of length 4, see Figure 14.

\begin{figure}
\includegraphics[scale=0.3]{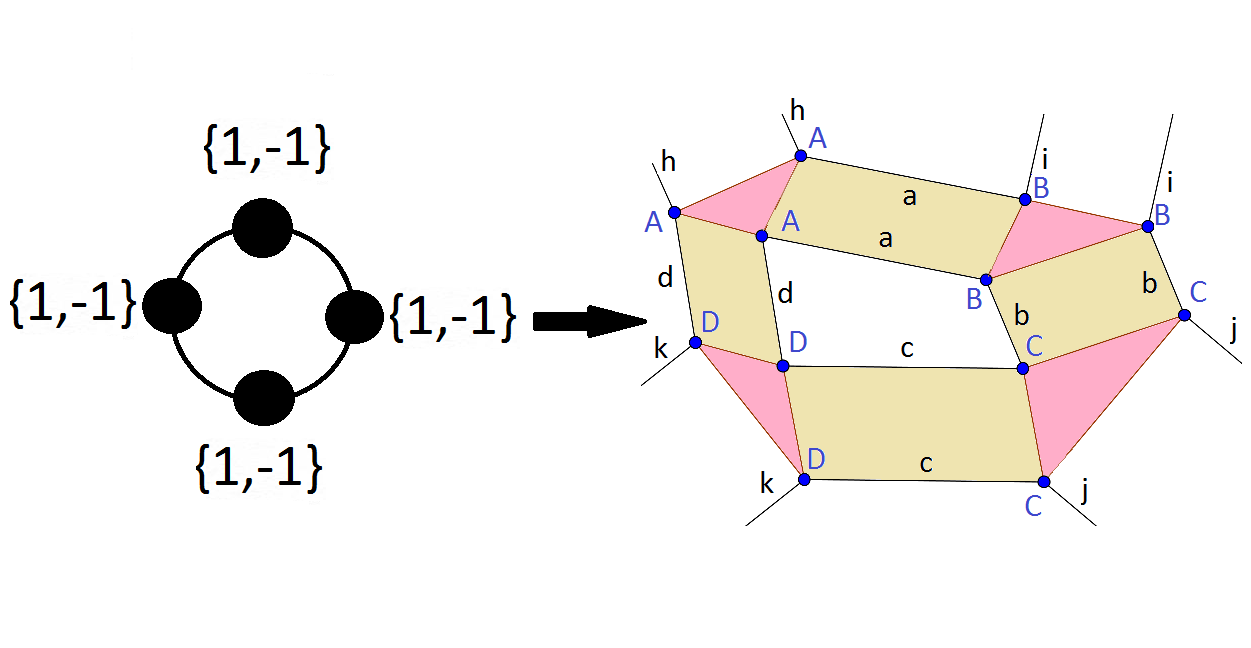}
\caption{A graph representation of a surface in $\mathcal{H}^{1}(1^{4},-1^{4})$ and a flat realization.}
\end{figure}

\end{ex}

\begin{rem}
Realization of graph representations associates a combinatorial object to a translation surface with or without poles by cutting along waist curves of the distinguished family of finite volume cylinders. If we just pinch waist curves to nodal points, we get meromorphic 1-forms on stable curves whose dual graph is the graph we would have obtained in the graph representation. Differentials on stable curves are a kind of half-way stage between combinatorial and geometrical objects. See \cite{Zv} for differentials on stable curves.
\end{rem}

\begin{proof}[Proof of Theorem 2.3 in the case $k=2$]
First we simplify the problem by considering only graph representations such that there is no edge cutting the graph into two connected components. Indeed, if each of the two parts of the graph representation can be realized, we can rescale and rotate one of the two surfaces in order to connect it with the other (waist curves of the cylinder corresponding to the additional poles of order two must have the same length and direction).\newline

We carry out another simplification. We want to consider only graph representations where there is a unique number $\geq-1$ in each family. In a given representation graph, we consider a family with several numbers $a_{1},\dots,a_{s}\geq-1$ such that their total sum $A$ is nonzero. In the singularity pattern of the stratum and the families of the representation graph, we replace the numbers $a_{1},\dots,a_{s}$ by the unique number $A$. If this simplified representation graph can be realized by a surface $X$, we can break up the conical singularity corresponding to the number $A$ in order to get a surface $X_{0}$ in the initial stratum that realizes the initial graph representation. Though breaking up singularities is not a local surgery, if the surgery is small enough, cylinders of $X$ corresponding to the edges of the graph representation persist.\newline

In the case of a family $(a_{1},\dots,a_{l},-b_{1},\dots,-b_{t})$ with several numbers corresponding to conical singularities and such that $A=\sum \limits_{i=1}^l a_{i} = 0$, we have to work differently. We have $\sum \limits_{i=1}^l a_{i} - \sum \limits_{j=1}^t b_{j}=-4+4w+2v$ where $w$ and $v$ are respectively the weight and the valency of the vertex to which the family is assigned. Indeed, weight $w$ corresponds to the genus of the connected component and edges are accounted for poles of order two because their geometry is that of a semi-infinite cylinder.\newline
We excluded the case $v=1$ (because the corresponding edge would cut out the graph into two connected parts) and the case $v=0$ is trivial. Thus we have $v\geq2$. Since $A=0$, the equation becomes $-\sum \limits_{j=1}^t b_{j}=-4+4w+2v$. Therefore, we have $w=0$, $v=2$ and there are no numbers below $-1$. The family is $(a_{1},\dots,a_{q},-1^{r})$ with $\sum \limits_{i=1}^q a_{i} = r$. We replace this family by $(2,-2)$. We will carefully look at this pole of order two.\newline
If the new graph representation we get can be realized by a surface $X$, we can slightly deform the surface in such a way that the boundary of the pole of order two is a unique closed saddle connection connected to the conical singularity of order two. Then we fill this pole by replacing the boundary saddle connection by two half saddle connections connected by a conical singularity of order $-1$. Thus, we get the realization of a graph representation where the corresponding family is not $(2,-2)$ but $(1,-1)$ (as valency is nonzero it is not an empty stratum). Next, we break up the conical singularity of order one as in the previous paragraph.\newline

Henceforth, we consider only graph representations where there is a unique number larger than $-1$ (or equal) in every family and such that there is no edge splitting the graph into two connected components.\newline
Adding a number of poles of order two equal to the valency $v_i$ of the vertex to every family $f_i$ gives the singularity pattern of a flat surface (with poles of higher order) of genus $w_i$. When there is at least a pole of higher order in the singularity pattern, the stratum is nonempty (Proposition 3.2). We check that there is a sufficient number of singularities by carrying out the same proof as in the case $k=1$.\newline
Consequently, for every family $f_i$ of the graph representation, there is a genus $w_i$ stratum in which we can pick the adequate component. Following our construction in the proof of Lemma 4.18, for every such singularity pattern, we can construct a surface such that each domain of pole is unique closed saddle connection of prescribed length and direction. We will see later how to match these components.\newline

We construct the required surface by gluing cylinders along the boundaries of the domains of the additional poles of order two that correspond to valency. We assume two technical conditions:\newline
- the sum of the holonomy vectors at the two ends of each of the $t$ cylinders corresponding to edges is zero.\newline
- the sum of the holonomy vectors of the boundaries of domains of poles of the same vertex (including the poles of order two corresponding to edges) sums to zero.\newline
Thus, the problem is reduced to solving a linear system of $t+s+1=2t+1+\sum \limits_{i=0}^s w_i -g$ equations in $2t+p$ indeterminates.\newline

We excluded graphs where cutting some edge splits the graph into two connected components. So we can exclude the case where $g=\sum \limits_{i=0}^s w_i$ and $p=1$ where the genus of the graph is zero.\newline

Outside this specific case, the system is underdetermined. Exactly in the same way as in the proof of the case $k=1$, we can prove that generic solutions have nonzero values for every $t$-variable.\newline

In the same way as in the case $k=1$, we show that we can always construct surfaces with prescribed holonomy vectors for strata corresponding to vertices. We distinguish two cases depending on the projection of vector space $S$ of solutions to the space of $p$-variables and $t$-variables of every vertex. These two cases correspond to the two cases of Lemma 4.18.\newline
In conclusion, for a generic solution in $S$, we can construct surfaces corresponding to every vertex. Therefore, for every graph representation of any stratum we can construct the flat surface that realizes it.\newline

Using the same arguments as in the proof of the case $k=1$ (applying the contraction flow to the cylinders only), we can choose a realization of the graph representation where the waist curves of the cylinders belong to the same direction. This ends the proof.\newline
\end{proof}

\begin{ex}
The singularity pattern of $\mathcal{H}^{2}(2,1^{3},-1^{3},-2^{3})$ can be split into four families $(1,-1,-2)$, $(1,-1,-2)$, $(1,-1,-2)$ and $(2)$ that can be arranged to form a star tree with three leaves, see Figure 15.

\begin{figure}
\includegraphics[scale=0.3]{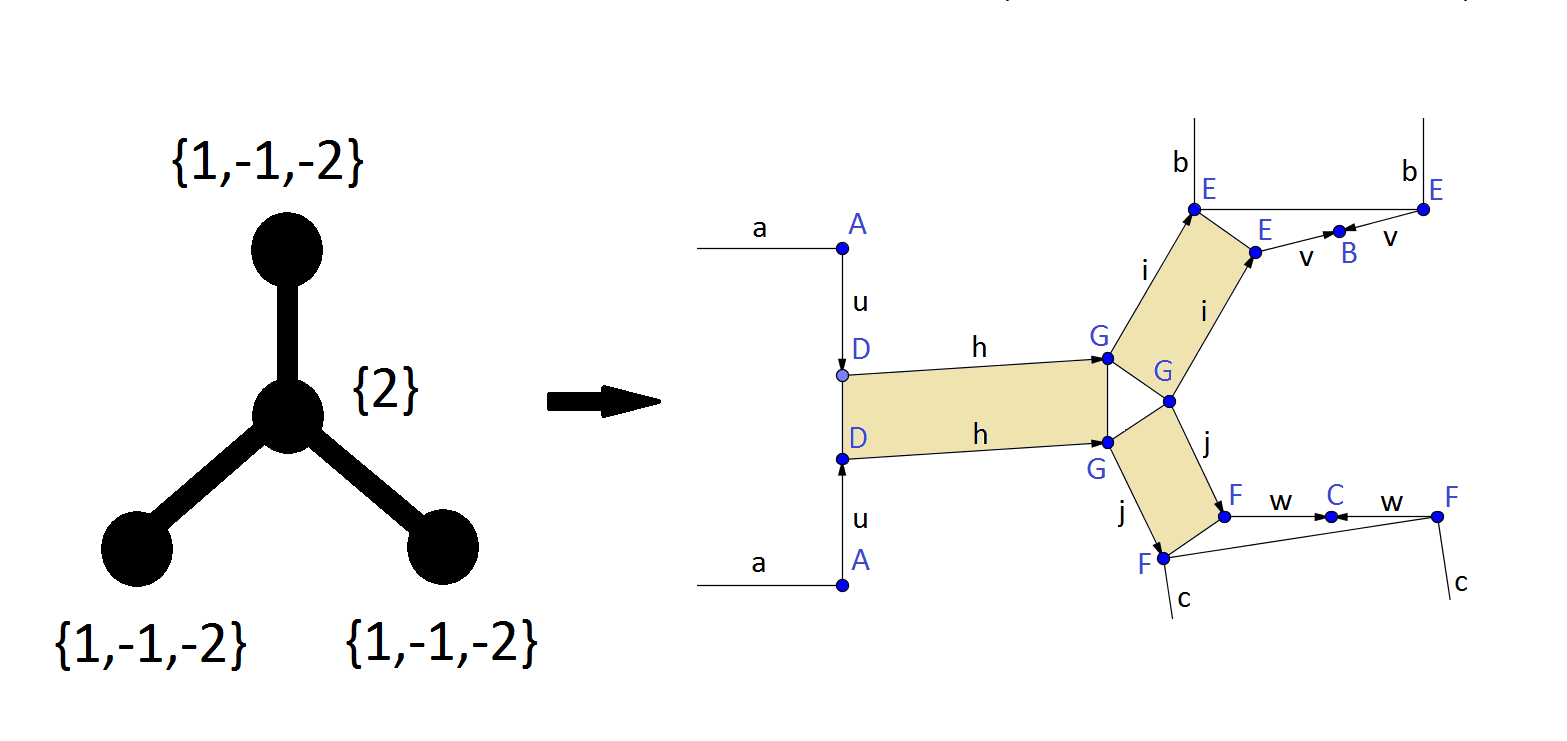}
\caption{A graph representation of a surface in $\mathcal{H}^{2}(2,1^{3},-1^{3},-2^{3})$ and a flat realization.}
\end{figure}

\end{ex}

We define for every stratum of $1$-forms or quadratic differentials its \textit{reducibility index} $\kappa$. The number $\kappa+1$ is an upper bound on the number of component the flat surface can decompose into after cutting along family of cylinders. This bound is an obstruction that depends only on the combinatorics of singularity pattern $a_1,\dots,a_n,-b_1,\dots,-b_p$ of stratum $\mathcal{H}^{k}(a_1,\dots,a_n,-b_1,\dots,-b_p)$.

\begin{defn}
The \textit{reducibility index} $\kappa$ of a stratum is the maximal number $s$ such that there exists a graph representation of level $s$ of the stratum. It follows from that definition that $0 \leq \kappa \leq n-1$.\newline
Strata of genus zero where $\kappa=0$ are called \textit{irreducible strata}. Otherwise, they are \textit{reducible strata}.\newline
\end{defn}

The combinatorial constraint $\kappa \leq n-1$ is not the only one. In the case $k=1$, there is another one that forces $\kappa$ to be small when both the genus and the number of poles are small even if there are many conical singularities.

\begin{prop}
Let $\mathcal{H}^{1}(a_1,\dots,a_n,-b_1,\dots,-b_p)$ be a stratum of genus $g$. If $g=0$ and $p=1$ then $\kappa=0$. Otherwise, we have $\kappa \leq 2g+2p-3$.
\end{prop}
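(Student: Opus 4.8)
The plan is to read the bound straight off conditions (i) and (ii) in the definition of a graph representation, together with the identity $t = s+g-\sum_{i=0}^{s} w_i$ recorded just after Definition 2.1 (which itself comes from summing condition (ii) and using $\sum_i v_i = 2t$ and $\sum_i \sigma(i)=2g-2$). Condition (iii) will not be needed. I will show that \emph{every} graph representation with $s \ge 1$ satisfies $s \le 2g+2p-3$, and then deduce both assertions.

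\textbf{First step: a dichotomy at each vertex.} Fix a graph representation $(G,f_0,\dots,f_s,w_0,\dots,w_s)$ with $s\ge 1$. For each $i$ let $p_i$ be the number of poles among $-b_1,\dots,-b_p$ lying in $f_i$ and $c_i$ the number of conical singularities among $a_1,\dots,a_n$ in $f_i$; by (i), $c_i\ge 1$. Since $k=1$, the range $\{-k+1,\dots,-1\}$ is empty, so each $a_j$ is a \emph{positive} integer. Hence if $p_i=0$ then $f_i$ contains only positive numbers, so $\sigma(i)\ge c_i\ge 1$, and condition (ii), $\sigma(i)=v_i+2w_i-2$, forces $v_i+2w_i\ge 3$. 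If $p_i\ge 1$ I will only use the crude bound $v_i+2w_i\ge v_i\ge 1$, valid because $G$ is connected with at least two vertices.

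\textbf{Second step: the global count.} Let $q$ be the number of indices $i$ with $p_i\ge 1$; since $\sum_i p_i=p$ we have $q\le p$. The $s+1-q$ vertices with no pole each contribute at least $3$ and the remaining $q$ vertices each contribute at least $1$ to $\sum_{i=0}^{s}(v_i+2w_i)$, so
\[
\sum_{i=0}^{s}(v_i+2w_i)\;\ge\;3(s+1-q)+q\;=\;3(s+1)-2q\;\ge\;3(s+1)-2p.
\]
On the other hand $\sum_i v_i=2t$ and $t=s+g-\sum_i w_i$, so the left-hand side equals $2t+2\sum_i w_i=2\bigl(s+g-\sum_i w_i\bigr)+2\sum_i w_i=2g+2s$. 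Therefore $2g+2s\ge 3(s+1)-2p$, i.e. $s\le 2g+2p-3$.

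\textbf{Third step: conclusion.} If $g=0$ and $p=1$, the inequality just proved says no graph representation can have $s\ge 1$; since the trivial one-vertex graph representation (all singularities in a single family of weight $g$) always exists, $\kappa=0$. In all remaining cases $2g+2p-3\ge 0$ — indeed $p\ge 1$ always, and either $g\ge 1$ (so $2g+2p-3\ge 1$) or $p\ge 2$ (so $2g+2p-3\ge 1$) — so whether or not a graph representation of positive level exists, the maximal level $\kappa$ satisfies $\kappa\le 2g+2p-3$. The argument is short and I do not anticipate a genuine obstacle; the only point deserving care is the dichotomy of the first step, in particular the observation that for $k=1$ a family containing no pole can contain only positive numbers. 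This is exactly what fails for $k\ge 2$, where a family with no pole of higher order may still carry conical singularities of negative degree, so $\sigma(i)\ge 1$ is no longer automatic — which is why the sharper statement is special to $1$-forms.
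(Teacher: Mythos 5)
Your proof is correct and follows essentially the same route as the paper: a valency-sum count over the vertices of a graph representation combined with the key observation that, since conical singularities have positive degree when $k=1$, a family containing no pole forces $v_i+2w_i\ge 3$ (the paper phrases this for pure representations as ``valency at least three''). The only difference is cosmetic — you work with general weighted representations and split vertices by whether they contain a pole, whereas the paper first reduces to pure representations and counts vertices of valency $0$, $1$, $2$ — and both yield the same bound $\kappa\le 2g+2p-3$ with the degenerate case $g=0$, $p=1$ handled identically.
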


\begin{proof}
We consider a pure graph representation of level $\kappa$ of stratum $\mathcal{H}$. In any graph, the valency sum formula states that the sum of the valency of each vertex is twice the number of edges.\newline
Set $c_{i}$ the number of vertices of valency equal to $i$. The valency sum formula states that $\sum \limits_{i\geq0} ic_{i} = 2(g+\kappa)$.\newline
Moreover, we have $\sum \limits_{i\geq0} ic_{i} \geq 3\sum \limits_{i\geq0} c_{i} -3c_{0} -2c_{1} - c_{2} \geq 3(\kappa+1) -3c_{0} -2c_{1} - c_{2}$.\newline
Thus, $\kappa \leq 2g+3c_{0}+(c_{1}+c_{2})+c_{1}-3$.\newline
There is a positive number in every family corresponding to a vertex in the graph decomposition so the valency of a vertex corresponding to a family without negative number is at least three. Consequently we have $c_{1} \leq p$ and $c_{1}+c_{2} \leq p$.\newline
Then we have $\kappa \leq 2g+2p+3c_{0}-3$.\newline
Except in the case where $\kappa=0$ and $g=0$ there is no vertex without edge so $c_{0}=0$ when $g \geq 1$ or $\kappa \geq 1$.\newline
When $g=0$ either $\kappa=0$ or $\kappa \leq 2g+2p-3$ so $\kappa=0$ when $p=1$. This ends the proof.
\end{proof}

Moreover, in genus zero, there are classical arithmetic obstructions. They show that the bound based on Proposition 6.6 is very far from being sharp.

\begin{prop}
Let $\mathcal{H}^{1}(a_1,\dots,a_n,-b_1,\dots,-b_p)$ be a stratum of genus zero such that 
$gcd(a_1,\dots,a_n,b_1,\dots,b_p) \neq 1$. Then the stratum is irreducible ($\kappa =0$).
\end{prop}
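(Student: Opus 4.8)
The plan is to argue by contradiction: suppose $\mathcal{H}^{1}(a_1,\dots,a_n,-b_1,\dots,-b_p)$ is a stratum of genus zero with $d := \gcd(a_1,\dots,a_n,b_1,\dots,b_p) \neq 1$, and suppose it is reducible, i.e. $\kappa \geq 1$. Then by Definition 6.4 there is a graph representation of level $s = \kappa \geq 1$; equivalently, by Definition 2.2 (the genus-zero criterion), one can split the singularity pattern into two nonempty families whose sums are each equal to $-k = -1$. I will extract an arithmetic contradiction from the existence of such a splitting.

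First I would reduce to the fundamental case of a level-one graph representation, or directly to the partition statement of Definition 2.2: there exist disjoint nonempty subsets $I, J$ partitioning $\{1,\dots,n+p\}$ (indices of the full singularity pattern $a_1,\dots,a_n,-b_1,\dots,-b_p$) such that the sum over $I$ equals $-1$ and the sum over $J$ equals $-1$. Here I must be a little careful: a general graph representation of level $s$ need not come from a single such 2-splitting, but cutting along any single edge whose removal disconnects $G$ does produce one; and since $g=0$, condition (ii) forces $t = s + g - \sum w_i = s \geq 1$ with all $w_i = 0$ (a genus-zero graph representation is automatically pure and its underlying graph is a tree), so every edge is a bridge and such a 2-splitting exists.

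The key step is then purely number-theoretic. Every entry of the singularity pattern is divisible by $d$: the positive entries $a_i$ by hypothesis, and the negative entries $-b_j$ because $d \mid b_j$. Hence any subset-sum of entries of the pattern is divisible by $d$. In particular the sum over the family $I$, which equals $-1$, must be divisible by $d$; so $d \mid 1$, forcing $d = 1$, contradicting $d \neq 1$. Therefore no such splitting exists, so there is no graph representation of level $\geq 1$, so $\kappa = 0$ and the stratum is irreducible.

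The main obstacle is purely expository rather than mathematical: making precise that in genus zero every graph representation of level $\kappa \geq 1$ yields a bridge edge and hence a 2-splitting of the singularity pattern with both parts summing to $-1$. This uses the remarks following Definition 2.1 (in genus zero $t = s$ and all weights vanish, so $G$ is a tree), together with condition (ii) applied to the two components obtained by cutting a bridge: each component, together with the order-$1$ pole created at the cut, has total order $-2 = k(2\cdot 0 - 2)$, so removing that order-$1$ pole leaves a family summing to $-1$. Once this combinatorial bookkeeping is in place, the divisibility contradiction is immediate and the proof is complete.
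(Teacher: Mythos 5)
Your proof is correct and follows essentially the same route as the paper: in genus zero every graph representation is a tree (all weights vanish and $t=s$), which forces a subfamily of the singularity pattern to sum to $-1$, contradicting divisibility by $\gcd(a_1,\dots,a_n,b_1,\dots,b_p)\neq 1$. The only cosmetic difference is that the paper extracts the family summing to $-1$ from a valency-one vertex via condition (ii), whereas you cut a bridge edge to get the two-family splitting of Definition 2.2; the arithmetic conclusion is identical.
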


\begin{proof}
In genus zero, a graph representation is a graph with no cycles. So there always exists a vertex with valency $1$. We cannot find a subfamily of $a_1,\dots,a_n,-b_1,\dots,-b_p$ such that the sum of the degrees is $-1$. Thus, $\kappa = 0$.
\end{proof}

The following proposition reduces the calculation of the reducibility index $\kappa$ of the strata of higher genus to those of genus zero.

\begin{prop}
Let $\mathcal{H}=\mathcal{H}^{k}(a_1,\dots,a_n,-b_1,\dots,-b_p)$ be a stratum of genus $g$. We have $\kappa(\mathcal{H})= \kappa(\mathcal{H'})$ where $\mathcal{H'}=\mathcal{H}^{k}(a_1,\dots,a_n,-b_1,\dots,-b_p,-k^{2g})$ is the stratum of genus zero associated to $\mathcal{H}$ by adding a sufficient number of poles of order $k$ to obtain genus zero.
\end{prop}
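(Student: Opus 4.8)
The plan is to prove the two inequalities $\kappa(\mathcal{H}) \leq \kappa(\mathcal{H}')$ and $\kappa(\mathcal{H}') \leq \kappa(\mathcal{H})$ by transforming a graph representation of one stratum into one of the other of the same level. The preliminary reduction is to work only with \emph{pure} graph representations: purifying a representation (attaching $w_i$ loops at vertex $i$ and resetting all weights to zero) preserves the number $s+1$ of vertices, the singularity pattern, the genus, and conditions (i)--(iii), since adding loops changes neither the family sums adjusted for valency nor the set of bridges; hence $\kappa$ is also the largest level of a \emph{pure} graph representation. Summing condition (ii) over the vertices of a pure representation of a stratum of genus $g_0$ gives $t = s + g_0$ edges, so the underlying connected multigraph has first Betti number $g_0$; in particular a pure representation of the genus-zero stratum $\mathcal{H}'$ is a tree on $s+1$ vertices with $s$ edges.

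First I would prove $\kappa(\mathcal{H}) \leq \kappa(\mathcal{H}')$. Start from a pure graph representation $(G,f_0,\dots,f_s,0,\dots,0)$ of $\mathcal{H}$ of level $s=\kappa(\mathcal{H})$, which has $s+g$ edges and first Betti number $g$. Pick a spanning tree $\Gamma \subseteq G$; exactly $g$ edges of $G$ lie outside $\Gamma$. Delete each such edge and, for each of its two endpoints (a loop counting its vertex twice), append one pole of order $k$ to the corresponding family. This adds $2g$ poles of order $k$, so the families now carry the singularity pattern of $\mathcal{H}'$ over the tree $\Gamma$. Condition (i) is untouched, and condition (ii) persists because deleting an edge at a vertex lowers both its valency and $\tfrac{1}{k}$ times its family sum by one. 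For condition (iii) when $k=1$: if one of the two pieces obtained by cutting an edge $e$ of $\Gamma$ contained no pole of $\mathcal{H}'$, then it met no deleted edge, hence in $G$ it was joined to the rest only through $e$, so $e$ would be a bridge of $G$ with a pole-free side, contradicting (iii) for $G$. Thus the construction yields a pure graph representation of $\mathcal{H}'$ of level $s$.

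Then I would prove $\kappa(\mathcal{H}') \leq \kappa(\mathcal{H})$, the more delicate direction. Start from a pure graph representation $(\Gamma,f_0,\dots,f_s,0,\dots,0)$ of $\mathcal{H}'$ of level $s=\kappa(\mathcal{H}')$, with $\Gamma$ a tree whose families carry, beyond the singularity pattern of $\mathcal{H}$, a multiset of $2g$ excess poles of order $k$. I would add $g$ new edges on the vertex set, each absorbing one pole of order $k$ from each of its two endpoint families; writing $G = \Gamma \cup D$ for the resulting multigraph ($s+g$ edges) and deleting the absorbed poles recovers the singularity pattern of $\mathcal{H}$, with (i), (ii) and connectedness immediate and the genus count giving genus $g$. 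When $k=2$ condition (iii) is vacuous and we are done, so the whole difficulty is condition (iii) when $k=1$: the bridges of $G$ are precisely the edges of $\Gamma$ not spanned by an edge of $D$, and although each edge of $\Gamma$ separates the poles of $\mathcal{H}'$, it may fail to separate the poles of $\mathcal{H}$ once the $2g$ simple poles are removed. The plan here is to exploit the freedom in choosing which $2g$ poles of order $k$ play the role of the excess poles, together with the freedom in choosing the $g$ new edges, to arrange that every edge of $\Gamma$ one side of which would otherwise become pole-free is spanned by a new edge, while keeping every family nonempty in zeroes. This is a finite matching problem on the tree $\Gamma$, and I expect making it always solvable — equivalently, ruling out a local obstruction where too few poles of order $k$ lie in the relevant subtrees — to be the main obstacle of the proof; this is exactly the step where the hypothesis that $\mathcal{H}$ (hence each component that arises) is a genuine nonempty stratum must be used.

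Combining the two inequalities gives $\kappa(\mathcal{H}) = \kappa(\mathcal{H}')$.
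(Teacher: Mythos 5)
Your first inequality $\kappa(\mathcal{H})\leq\kappa(\mathcal{H}')$ is complete and is essentially the paper's own argument: the paper removes $g$ edges while keeping the graph connected and appends one pole of order $k$ to a family for each removed edge-end, and its verification of condition (iii) is the same bridge argument you give via a spanning tree. The genuine gap is in the converse inequality $\kappa(\mathcal{H}')\leq\kappa(\mathcal{H})$: you reduce it to a matching problem (choose which $2g$ poles of order $k$ are the excess ones and how to pair them into $g$ new edges so that every edge of $\Gamma$ that would acquire a pole-free side is spanned by a new edge) and then only state the expectation that this is always solvable. That step is never proved, and it is exactly the crux; for comparison, the paper's proof of this direction simply picks $g$ pairs of poles of order $k$, removes them and draws an edge between their vertices, without addressing condition (iii) at all, so your instinct that (iii) is the delicate point is correct, but as written your proof is incomplete precisely there.

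Worse, the matching problem you appeal to is not always solvable, so the plan cannot be completed as stated. Take $k=1$, $\mathcal{H}=\mathcal{H}^{1}(1,1,-2)$ (genus $1$) and $\mathcal{H}'=\mathcal{H}^{1}(1,1,-2,-1,-1)$ (genus $0$). The splitting $f_0=\lbrace 1,-2\rbrace$, $f_1=\lbrace 1,-1,-1\rbrace$ on a single edge satisfies (i), (ii), (iii), so $\kappa(\mathcal{H}')=1$. But both simple poles lie in $f_1$, so your construction forces the unique new edge to be a loop at that vertex, and the tree edge remains a bridge whose side $\lbrace 1\rbrace$ contains no pole of $\mathcal{H}$. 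Moreover no other choice helps: the only admissible partition for $\mathcal{H}$ is $\lbrace 1\rbrace$, $\lbrace 1,-2\rbrace$, and condition (ii) with $k=1$ forces either one edge plus a loop at the $\lbrace 1\rbrace$-vertex (weights $0,0$) or a single edge (weights $1,0$); in both cases the bridge has the pole-free side $\lbrace 1\rbrace$, so $\kappa(\mathcal{H})=0<\kappa(\mathcal{H}')$. Geometrically, the unique pole of $\mathcal{H}^{1}(1,1,-2)$ has zero residue, so no separating finite-volume cylinder exists, whereas $\mathcal{H}'$ admits one. So the obstruction you flag is real and cannot be removed by a cleverer matching; closing your argument would require either reinterpreting condition (iii) or restricting the class of strata, which is also a problem for the paper's own (more cavalier) proof of this direction.
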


\begin{proof}
We consider a graph representation of level $\kappa(\mathcal{H})$ of stratum $\mathcal{H}$ of genus $g$. Without loss of generality, we can choose a pure graph representation (see Definition 2.1). The graph of the graph representation has $g+\kappa$ edges. We remove $g$ edges in such a way the graph remains connected. We add to every vertex one pole of order $k$ per adjacent edge that has been removed.\newline
In the case $k=2$, we get a graph representation of level $\kappa(\mathcal{H})$ of stratum $\mathcal{H'}$. In the case $k=1$, we have to check another condition. In the new graph we get, if an edge becomes such that cutting it splits the graph into two connected components, it is because an edge connecting these two components has been cut. Therefore, in the case $k=1$, there is at least one simple pole in each of these connected components. Thus, we get a graph representation of level $\kappa(\mathcal{H})$ of stratum $\mathcal{H'}$. Therefore, we have $\kappa(\mathcal{H}) \leq \kappa(\mathcal{H'})$.\newline
Next, we consider a graph representation of level $\kappa(\mathcal{H'})$ of stratum $\mathcal{H'}$ of genus zero. We pick randomly $g$ pairs of poles of order $k$ among the families and remove them. For every pair removed, we draw an edge between their vertices (not necessarily distinct). We have constructed a graph representation of level $\kappa(\mathcal{H'})$ of stratum $\mathcal{H}$ of genus $g$.\newline
Therefore, we have $\kappa(\mathcal{H}) \geq \kappa(\mathcal{H'})$. This ends the proof.\newline
\end{proof}

In some specific cases, we are able to provide the value of the reducibility index of a stratum immediately.

\begin{cor}
Let $\mathcal{H}^{1}(a_1,\dots,a_n,-1^{p})$ be a stratum of translation surfaces without poles (when $p=0$) or whose poles are all simple (when $p\geq2$). Then we have $\kappa=n-1$.
\end{cor}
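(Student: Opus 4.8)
We must show that for a stratum $\mathcal{H}^{1}(a_1,\dots,a_n,-1^{p})$ of genus $g$ with only simple poles (or no poles), the reducibility index is $\kappa = n-1$. Since Definition 6.4 already gives $\kappa \leq n-1$ for any stratum, it suffices to exhibit a single graph representation of level $n-1$. The idea is to make every conical singularity sit on its own vertex: take $s = n-1$, so there are $n$ vertices $\{0,1,\dots,n-1\}$, and let family $f_i$ contain exactly the singularity $a_{i+1}$ together with an appropriate number of simple poles from the pattern, and let all weights $w_i$ be zero. Condition (i) of Definition 2.1 is then automatic. The work is to arrange the simple poles and the edges so that conditions (ii) and (iii) hold.

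First I would handle the genus-zero case $g=0$ directly (by Proposition 6.8 the higher-genus case reduces to it, but here a direct construction is cleaner). In genus zero the graph must be a tree with $n$ vertices, hence $n-1$ edges, and since $k=1$ each edge contributes a pole of order $1$ to each of its two endpoints; thus $v_i$ is the tree-degree of vertex $i$ and $\sum v_i = 2(n-1)$. Condition (ii) reads $\sigma(i) = v_i - 2$, i.e. $a_{i+1} + (\text{number of original simple poles placed in } f_i) \cdot(-1) = v_i - 2$. So once we fix a tree structure on the $n$ vertices we must place $p$ simple poles among the families so that family $i$ receives $a_{i+1} - v_i + 2$ of them; these counts are nonnegative and sum to $p$ precisely because $\sum a_{i+1} = -p + 2g - 2 = -p-2$ in genus zero and $\sum(v_i-2) = 2(n-1) - 2n = -2$. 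Choosing, say, a path graph (so $v_0 = v_{n-1} = 1$ and $v_i = 2$ otherwise) makes the required pole-counts $a_1+1,\ a_2,\dots,a_{n-1},\ a_n+1$, all $\geq 0$ since each $a_i \geq 1$; condition (iii) holds because each edge of a tree disconnects the graph but there is always at least one simple pole available on each side (this is where $p$ being reasonably large matters, and one checks the small cases $p=0,1$ — but $p=1$ is excluded by nonemptiness and $p=0$ forces $g\geq 1$, handled below). For $p=0$ (and necessarily $g\geq 1$) one instead uses a single-cycle-free-plus-loops construction: take the same $n$ vertices on a path, distribute the $g$ extra edges as loops or as a spanning structure to absorb the genus, and verify (ii) with the weights still zero.

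For general $g$, the clean route is to invoke Proposition 6.8: $\kappa(\mathcal{H}^{1}(a_1,\dots,a_n,-1^p)) = \kappa(\mathcal{H}^{1}(a_1,\dots,a_n,-1^p,-1^{2g}))$, and the right-hand stratum is again of the form ``conical singularities plus simple poles'' in genus zero, with $p + 2g \geq 2$ simple poles, so the genus-zero argument above applies verbatim and yields a level-$(n-1)$ graph representation. Hence $\kappa \geq n-1$, and combined with $\kappa \leq n-1$ we get equality.

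**Main obstacle.** The genuine content is verifying condition (iii) in the $k=1$ case: every tree edge is separating, so we must guarantee that after placing the simple poles each side of every edge contains at least one of them. With the path graph this is an easy counting/interval argument, but one must be careful with the edge cases where $p$ is small — in particular ruling out $p=1$ (empty stratum) and carefully treating $p=0$ via the genus mechanism rather than via simple poles. I would also double-check that the nonnegativity of the pole-distribution counts $a_{i+1} - v_i + 2$ never fails, which is where the hypothesis $a_i \geq 1$ (conical singularities, not poles of intermediate order) is used.
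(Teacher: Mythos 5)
Your proposal is correct and follows essentially the same route as the paper: reduce to genus zero via Proposition 6.8 (adding $2g$ simple poles) and then exhibit a level-$(n-1)$ representation on a linear $A_n$ graph, placing each $a_i$ on its own vertex with simple poles distributed so that condition (ii) holds (one extra pole at each endpoint), condition (iii) following from $a_i\geq 1$. The paper's construction differs only in the trivial relabeling of which conical singularities sit at the endpoints, so no further comparison is needed.
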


\begin{proof}
It suffices to check the case of strata $\mathcal{H}^{1}(a_1,\dots,a_n,-1^{p})$ of genus zero (Proposition 6.8). In this case, we have $p=2+\sum \limits_{i=1}^n a_i$. The case $n=1$ is trivial so we assume $n\geq2$. We form the following families $\lbrace a_{1},-1^{1+a_{1}}\rbrace$, $\lbrace a_{2},-1^{1+a_{2}}\rbrace$ and $\lbrace a_{i},-1^{a_{i}}\rbrace$ for $3\leq i \leq n$. We attribute to each family a vertex of a $A_n$ graph (linear graph with $n$ vertices) and thus obtain a graph representation of level $n-1$ of stratum $\mathcal{H}(a_1,\dots,a_n,-1^{p})$.\newline
\end{proof}

Theorem 2 of \cite{Na} proves that for classical translation surfaces, the maximal number of components invariant by the vertical flow is $g+n-1$. This bound is sharp. We are going to prove a similar result for (half-)translation surfaces with poles. In this context, the combinatorics of the strata are more complicated so we need the concept of reducibility index (see Definition 2.2) to quantify specific obstructions.\newline

\begin{thm}
Let $(X,\phi)$ be a flat surface in stratum $\mathcal{H}^{k}(a_1,\dots,a_n,-b_1,\dots,-b_p)$ of genus $g$. Cutting along all saddle connections sharing a given direction $\theta$, we obtain at most $g+\kappa$ finite volume components. The bound is sharp for every stratum.
\end{thm}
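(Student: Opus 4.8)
The strategy is to deduce the upper bound from Lemma 6.1 together with the edge-counting relation $t=s+g-\sum_{i=0}^s w_i$ recorded in Definition 2.1, and to deduce sharpness from Theorem 2.3 applied to a graph representation of maximal level. For the upper bound I would fix the direction $\theta$, cut $(X,\phi)$ along all saddle connections in that direction, and apply Proposition 5.5 to obtain the invariant components; let $N$ be the number of them that have finite volume, each of which, by that proposition, is either a finite volume cylinder or a minimal component. In each such component I would choose a finite volume cylinder: the component itself when it is already a cylinder, and the one furnished by Lemma 5.12 when it is a minimal component. Because distinct invariant components have disjoint interiors, the resulting family $F$ of $N$ cylinders has pairwise disjoint interiors, so Lemma 6.1 produces a graph representation $(G,f_0,\dots,f_s,w_0,\dots,w_s)$ of the stratum whose graph has exactly $t=|F|=N$ edges. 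Since $F$ realizes a graph representation of level $s$ we have $s\leq\kappa$ by Definition 6.5, and since the weights $w_i$ are nonnegative, $N=t=s+g-\sum_{i=0}^s w_i\leq s+g\leq g+\kappa$.

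For sharpness, Definition 6.5 gives a graph representation of level $\kappa$, and passing to its associated pure graph representation preserves the level, hence the number $\kappa+1$ of vertices, so the pure representation has exactly $t=\kappa+g$ edges. Theorem 2.3 then yields a flat surface $(X,\phi)$ in the stratum and a family $F$ of cylinders realizing this pure graph representation, all of whose waist curves point in a common direction $\theta$, so that $|F|=t=\kappa+g$. Each cylinder of $F$ is bounded by saddle connections in the direction $\theta$ and contains no saddle connection in its interior, so cutting $(X,\phi)$ along all saddle connections in direction $\theta$ leaves each of them as a single finite volume invariant component; as the cylinders of $F$ are pairwise disjoint, this exhibits $\kappa+g$ distinct finite volume components. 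Together with the upper bound already proved, this shows that the number of finite volume components is exactly $g+\kappa$.

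The step I expect to demand the most care is the bookkeeping in the upper bound: one must check that the family $F$ extracted from the finite volume components genuinely satisfies the hypotheses of Lemma 6.1, and, above all, that cutting along $F$ creates exactly $N$ edges and not fewer. This last point rests on two facts — that distinct invariant components are disjoint, and that a finite volume cylinder carries no saddle connection in its interior — so that the waist curves of $F$ are $N$ distinct, genuinely separating cuts. Finally, no degenerate low-complexity case needs separate treatment: for instance when $g=0$ and $p=1$ one has $\kappa=0$ by Proposition 6.6, and the argument directly gives $N\leq g+\kappa=0$.
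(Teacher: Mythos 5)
Your argument is correct and is essentially the paper's own proof: the upper bound comes from Lemma 5.12 (a finite volume cylinder inside every minimal component), Lemma 6.1 (cutting along the resulting disjoint cylinders yields a graph representation) and the edge count $t=s+g-\sum_{i=0}^{s}w_i$ together with the maximality of $\kappa$ (the paper phrases this as a contradiction with a representation of level $\geq\kappa+1$ rather than directly), and sharpness comes from Theorem 2.3 applied to a graph representation of level $\kappa$ realized by parallel cylinders. Your added care --- passing to the pure representation to get exactly $g+\kappa$ edges, and checking that the parallel cylinders survive as distinct finite volume invariant components --- only makes explicit what the paper leaves implicit.
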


\begin{proof}
Let us suppose the existence of a flat surface $(X,\phi)$ with at least $g+\kappa+1$ finite volume components. These finite volume components are either minimal components or finite volume cylinders. Following Lemma 5.12, there is a finite volume cylinder inside every minimal component. So there are at least $g+\kappa+1$ finite volume cylinders whose interiors are disjoints in the surface $X$. Such a surface $(X,\phi)$ would realize a graph representation of level $\kappa+1$ and would contradict the hypothesis.\newline
For a stratum $\mathcal{H}=\mathcal{H}^{k}(a_1,\dots,a_n,-b_1,\dots,-b_p)$ of genus $g$ and of reducibility index $\kappa$, there is a graph representation of level $\kappa$. According to Theorem 2.3, there is a surface realizing this graph representation such that the $g+\kappa$ edges are represented by parallel cylinders so the bound is sharp.
\end{proof}

The reducibility index is crucial to characterize the strata where an infinity of saddle connections can occur.

\begin{proof}[Proof of Corollary 2.4]
Following Corollary 5.13, a surface has an infinite number of saddle connections if and only if there is a finite volume cylinder of periodic geodesics in some direction.\newline
If $g>0$ or $\kappa>0$, there is a graph representation of the surface with at least one edge so there is a surface with a cylinder in the stratum (Theorem 2.3).\newline
Conversely, in an irreducible stratum, the bound in Theorem 6.10 shows that there is no finite volume component so the number of saddle connections is always finite.
\end{proof}

We give some examples to illustrate the different phenomena that can occur.

\begin{ex} In $\mathcal{H}^{1}(3,-1^{3})$, we have $g=1$ and some surfaces have a cylinder so have an infinite number of saddle connections, see Figure 16.

\begin{figure}
\includegraphics[scale=0.3]{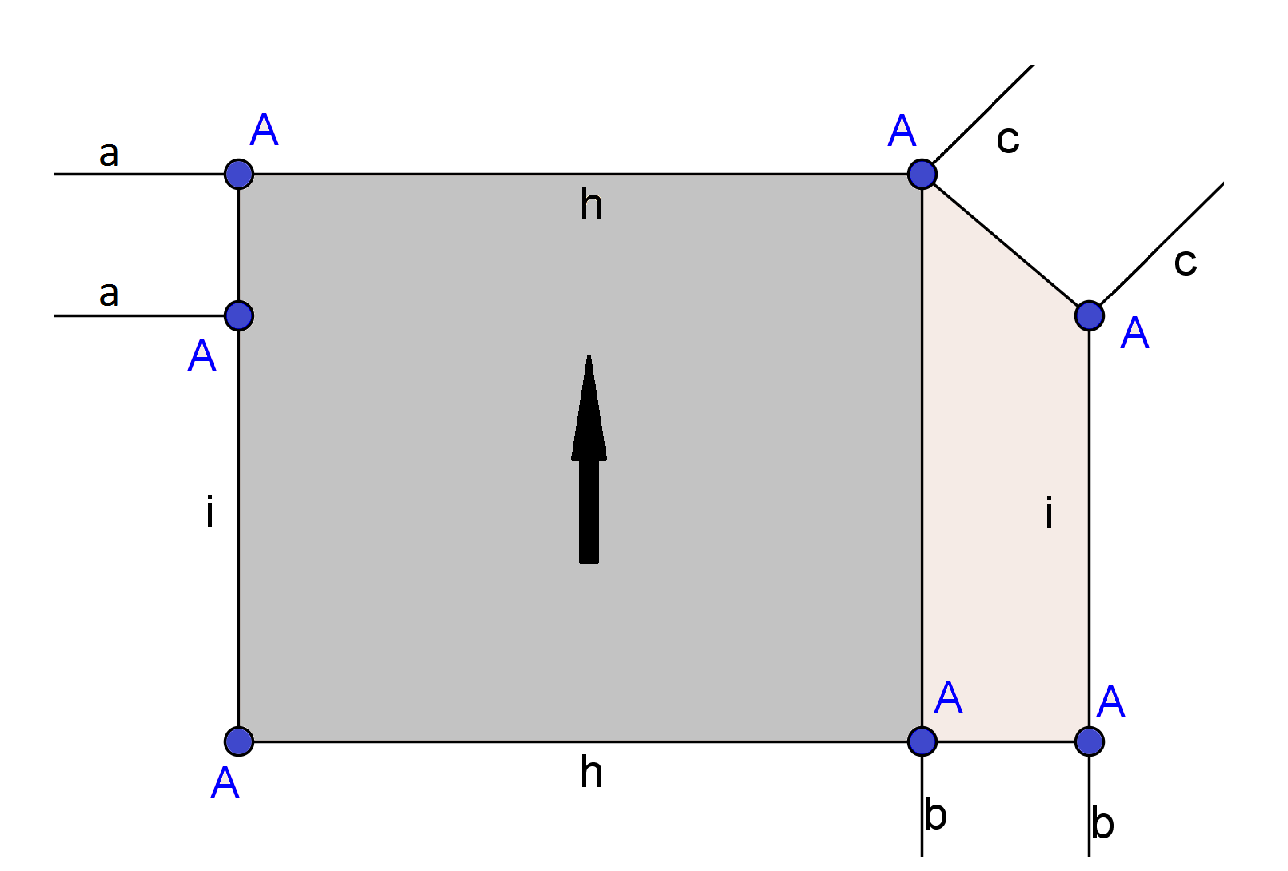}
\caption{A surface in $\mathcal{H}^{1}(3,-1^{3})$ with a cylinder in the vertical direction}
\end{figure}
\end{ex}
\begin{ex} In $\mathcal{H}^{1}(1^{2},-1^{4})$, we have $\kappa = 1$ and some surfaces have infinitely many saddle connections, see Figure 17.

\begin{figure}
\includegraphics[scale=0.3]{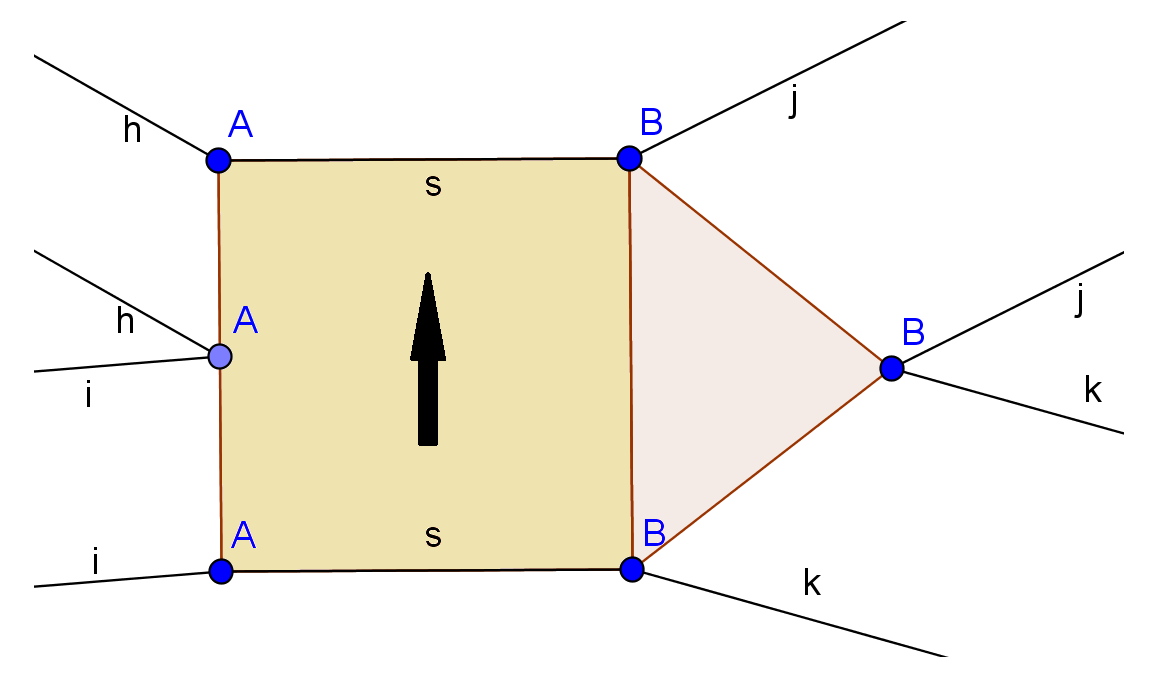}
\caption{A surface in $\mathcal{H}^{1}(1^{2},-1^{4})$ with a cylinder in the vertical direction}
\end{figure}
\end{ex}

\begin{ex} In $\mathcal{H}^{1}(1,7,-5^{2})$, we have $\kappa =0$ and all surfaces have finitely many saddle connections. An infinite number of saddle connections would imply existence of closed geodesic that would split the surface into two connected components where the total order of the singularities would be $-1$.\newline
\end{ex}

\section{Noncrossing saddle connections}

The following proposition provides lower bounds both on the number $|SC|$ of saddle connections and the maximal number $|A|$ of noncrossing saddle connections. In the case $k=1\ or\ 2$, the contraction flow is crucial to prove the sharpness of our lower bound on the number of saddle connections. In particular, this means that in the case $k=1\ or\ 2$, there are flat surfaces with a finite number of saddle connections in every stratum.

\begin{prop} Let $|A|$ be the number of edges of a maximal geodesic arc system for a flat surface with poles of higher order $(X,\phi)$ of genus $g$ belonging to $\mathcal{H}^{k}(a_1,\dots,a_n,-b_1,\dots,-b_p)$, then:
$$ |SC| \geq |A| \geq 2g-2+n+p$$
$|SC|=|A|=2g-2+n+p$ if and only if $core(X)$ is degenerate. Provided $k=1\ or\ 2$, there are such surfaces with degenerate core in the closure of every chamber in the ambiant stratum.
\end{prop}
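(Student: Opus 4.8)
The plan is to derive the inequality chain from the combinatorial identity of Lemma 4.9 and then characterize the equality case using the geometry of the core. First I would recall that by Lemma 4.9, a maximal geodesic arc system on $(X,\phi)$ satisfies $|A| = 2g-2+n+p+t$, where $t$ is the number of interior faces (ideal triangles) of the core. Since $t \geq 0$, this immediately gives $|A| \geq 2g-2+n+p$. The inequality $|SC| \geq |A|$ is trivial because a maximal geodesic arc system is by definition a collection of (pairwise noncrossing) saddle connections, hence $|A|$ counts a subset of all saddle connections. This establishes the displayed chain.

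Next I would treat the equality case. By the identity $|A| = 2g-2+n+p+t$, we have $|A| = 2g-2+n+p$ if and only if $t=0$, i.e. the maximal geodesic arc system has no interior triangles. By Lemma 4.4, $t=0$ means $\mathcal{I}\mathcal{C}(X) = \emptyset$, which is precisely the definition (Definition 4.1) of $core(X)$ being degenerate. Conversely, if $core(X)$ is degenerate, then $core(X) = \partial\mathcal{C}(X)$ is a graph (a finite union of saddle connections by Proposition 4.3), so every MGAS consists only of the saddle connections of $\partial\mathcal{C}(X)$ by Remark 4.11, giving $t=0$ and $|A| = 2g-2+n+p$. It then remains to upgrade the equality $|A| = 2g-2+n+p$ to $|SC| = |A|$: when the core is degenerate, I must argue that there are \emph{no} saddle connections other than those in $\partial\mathcal{C}(X)$. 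Any saddle connection $\gamma$ has finite length and joins conical singularities, all of which lie in $core(X) = \partial\mathcal{C}(X)$; since $core(X)$ is convex, the whole segment $\gamma$ lies in $\partial\mathcal{C}(X)$, which is a graph whose edges are saddle connections, so $\gamma$ must be one of those edges. Hence $|SC| = 2g-2+n+p$ exactly when the core is degenerate.

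Finally, for the last assertion, fix a chamber $\mathcal{C}$ in a stratum of $1$-forms or quadratic differentials. Pick any surface $(X,\phi) \in \mathcal{C}$. By Lemma 5.15 (the contraction flow lemma), there is a surface $(X_0,\phi_0)$ in the $GL^{+}(2,\mathbb{R})$-orbit closure of $(X,\phi)$ inside the ambient stratum such that all saddle connections of $(X_0,\phi_0)$ share a common direction and $core(X_0)$ is degenerate. By the equality case just proved, $(X_0,\phi_0)$ realizes $|SC| = |A| = 2g-2+n+p$. Since a degenerate core means the surface lies in (the closure of) every chamber adjacent to it, and since the contraction flow limit can be reached from inside $\mathcal{C}$ (the flow stays in the stratum because no saddle connection shrinks, its direction being chosen to avoid all saddle connection directions), the surface $(X_0,\phi_0)$ lies in the closure of $\mathcal{C}$. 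The main obstacle I expect is the last point: making precise that the contraction-flow limit sits in the closure of the \emph{given} chamber $\mathcal{C}$ rather than merely in the ambient stratum, which requires checking that the contraction flow path stays within $\mathcal{C}$ (no discriminant crossing) until the limit, or invoking that a surface with degenerate core lies in the closure of every chamber whose surfaces it limits.
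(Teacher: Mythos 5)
Your proposal is correct and follows essentially the same route as the paper's proof: the identity $|A|=2g-2+n+p+t$ from the MGAS triangulation, the observation that a degenerate core forces every saddle connection into $\partial\mathcal{C}(X)$ by convexity, and the contraction flow (Lemma 5.15) for the final claim — in fact you supply more detail than the paper does. The one issue you flag, that the contraction-flow limit lies in the closure of the given chamber $\mathcal{C}$ and not merely of the stratum, is settled by Proposition 4.14: for $k=1$ or $2$ the discriminant is $GL^{+}(2,\mathbb{R})$-invariant, so the connected contraction-flow orbit of a surface in $\mathcal{C}$ never crosses it and stays in $\mathcal{C}$, whence its limit lies in the closure of $\mathcal{C}$.
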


\begin{proof}
We have $|A|=2g-2+p+n+t$ where $t$ is the number of ideal triangles in the triangulation defined by a maximal geodesic arc system (Lemma 4.8).
The contraction flow gives examples of surfaces that have a degenerate core (Lemma 5.15).\newline
When $core(X)$ is degenerate, $\partial\mathcal{C}(X)=core(X)$ so all saddle connections belong to $\partial\mathcal{C}(X)$. Therefore, $|SC|=2g-2+n+p$.\newline
\end{proof}

When $k\geq3$, there is no contraction flow so there is no general procedure to get examples of the equality case. Most importantly, in some strata, there is no flat surface whose core is degenerate.\newline

\begin{prop}
For any flat surface $(X,\phi)$ of $\mathcal{H}^{k}(a_1,\dots,a_n,-b_1,\dots,-b_p)$ such that $k\geq3$ and there is a pole whose order $b_j$ satisfies  $\dfrac{k}{2} < b_{j} < k$, $core(X)$ is not degenerate and $|SC|>2g-2+n+p$.\newline
\end{prop}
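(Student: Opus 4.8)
The plan is to reduce the proposition to the single statement that $core(X)$ is not degenerate. Recall from Proposition 7.1 that $|SC|\ge |A|\ge 2g-2+n+p$, with both equalities holding simultaneously exactly when $core(X)$ is degenerate; hence, once the core is shown to be non-degenerate, the strict inequality $|SC|>2g-2+n+p$ follows at once. The first step is to unwind the hypothesis: a pole of order $b_j$ with $\frac{k}{2}<b_j<k$ is not a pole of higher order but one of the conical singularities, say $P$, and its total cone angle is $\alpha=\left(1-\frac{b_j}{k}\right)2\pi$. Since $\frac{k}{2}<b_j<k$ we have $0<\alpha<\pi$; such a conical singularity can occur only for $k\ge 3$, which is why the hypothesis includes that assumption.

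Now assume, for contradiction, that $core(X)$ is degenerate, i.e.\ $core(X)=\partial\mathcal{C}(X)$. By Proposition 4.4 this set is a finite union of saddle connections, hence a finite graph embedded in $X$, and in particular it has empty interior in $X$. Because $P$ is a conical singularity, $P\in core(X)$; and because $X\setminus core(X)$ is, by Lemma 4.5, the disjoint union of the $p$ domains of poles, and a set with empty interior has every point in the closure of its complement, the point $P$ lies in the closure of at least one domain of a pole $D$, so $P\in\partial D$. The boundary $\partial D\subseteq\partial\mathcal{C}(X)$ is again a finite union of saddle connections, and $P$, being a conical singularity, cannot be an interior point of any saddle connection by definition; hence $P$ is an endpoint of at least one saddle connection bounding $D$, so $P$ is a vertex of $\partial D$ and $\partial D$ has at least one interior angle $\theta$ at $P$. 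This angle is swept out inside $D$, hence inside the cone of total angle $\alpha$ at $P$, so $\theta\le\alpha<\pi$. But Proposition 4.11 states that every interior angle of the boundary of a domain of a pole of higher order is at least $\pi$ --- a contradiction. Thus $core(X)$ is not degenerate, and Proposition 7.1 gives $|SC|>2g-2+n+p$.

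The only points I expect to need care are the two topological facts used in the middle step: that every point of the one-dimensional set $core(X)=\partial\mathcal{C}(X)$ lies in the closure of one of the domains of poles, and that the interior angle $\theta$ of $\partial D$ at $P$ --- the angle swept inside $D$ between the two consecutive boundary saddle connections meeting at $P$ --- is genuinely bounded above by the total cone angle $\alpha$ at $P$. Both become routine once one works in a small metric ball $B(P,\varepsilon)$: since $\partial\mathcal{C}(X)$ is a finite union of compact saddle connections, for $\varepsilon$ small enough $B(P,\varepsilon)$ meets $core(X)$ only along the germs of the saddle connections actually issuing from $P$, so the complement of the core near $P$ is a union of honest angular sectors, each contained in a single domain of a pole and each of angle at most $\alpha$. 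After that, the chain $\pi\le\theta\le\alpha<\pi$ coming from Proposition 4.11 is the whole argument.
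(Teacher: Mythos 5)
Your proof is correct and follows essentially the same route as the paper: the paper's argument is exactly that a pole of order $\frac{k}{2}<b_j<k$ is a conical singularity of cone angle less than $\pi$, so by Proposition 4.11 it cannot lie on the boundary of any domain of a pole, which is incompatible with a degenerate core, and the strict inequality then comes from Proposition 7.1. You merely spell out in more detail the topological step (that degeneracy of the core forces this singularity to be a vertex of the boundary of some domain of a pole), which the paper leaves implicit.
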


\begin{proof}
Poles of order $b<k$ are conical singularities of total angle equal to $\frac{k-b}{k}2\pi$. Following Proposition 4.11, interior angles of the boundary of a domain of pole are at least equal to $\pi$. Therefore, poles of order $\dfrac{k}{2} < b < k$ do not belong to the boundary of any domain of pole and the core cannot be degenerate.
\end{proof}

\begin{ex}
Every flat surface $(X,\phi) \in \mathcal{H}^{3}(4,-2^{2},-3^{2})$ is such that $core(X)$ is not degenerate, see Figure 18.

\begin{figure}
\includegraphics[scale=0.3]{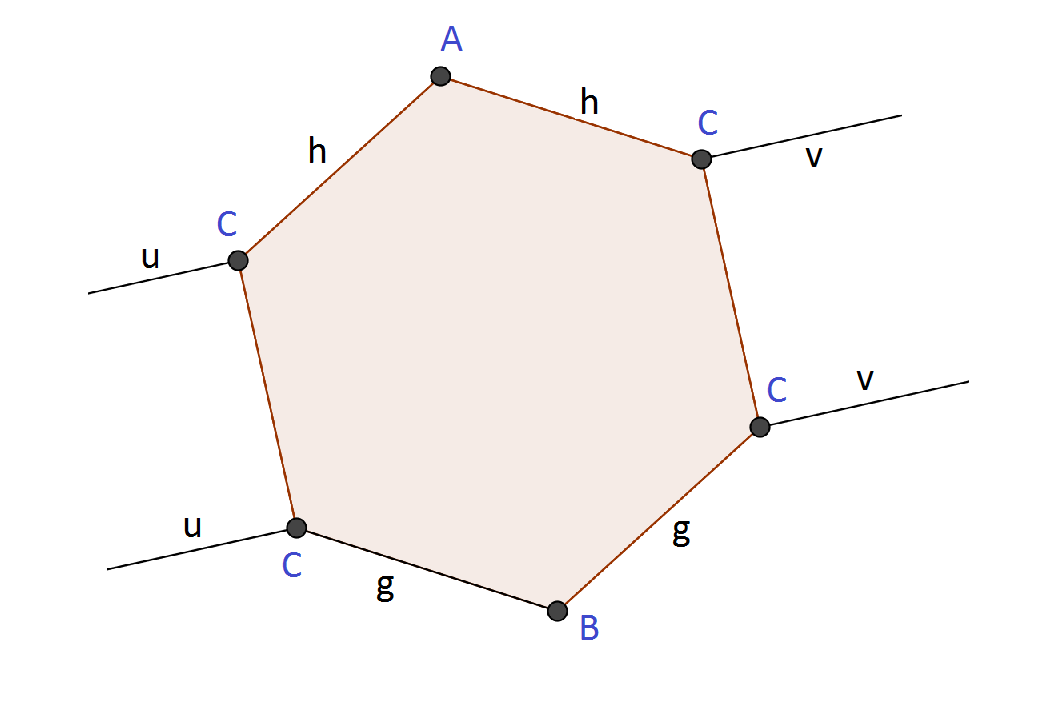}
\caption{A flat surface of $\mathcal{H}^{3}(4,-2^{2},-3^{2})$}
\end{figure}

\end{ex}

The following is a preparatory result to an upper bound depending on the combinatorics of the stratum.\newline

For a pole $P_j$ of order $b_j \geq k$, we denote by $d_j$ the number of saddle connections of the boundary of the domain $P_j$ whose two sides belong to the domain and by $e_j$ the number of saddle connections one side of which belongs to the domain of $P_j$ and another side belongs to another domain of pole or to the interior of the core. We define $\beta=2\sum \limits_{j=1}^p d_j + \sum \limits_{j=1}^p e_j$ as the boundary number of the surface.

\begin{lem}
Let $(X,\phi)$ be a flat surface of $\mathcal{H}^{k}(a_1,\dots,a_n,-b_1,\dots,-b_p)$. Set $|A|$ the number of edges of a maximal arc geodesic system and $\beta$ the boundary number of the surface. Then $|A| = 6g-6+3n+3p-\beta$.\newline
Besides, $|A|$ is constant in each chamber.
\end{lem}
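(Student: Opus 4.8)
The plan is to obtain the formula by combining a double count of the sides of the two-cells of the partial ideal triangulation cut out by a maximal geodesic arc system with the Euler-characteristic identity $|A|=2g-2+n+p+t$ (Lemma 4.10), and then to eliminate the quantity $t$.

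First I would record that, because $core(X)$ is convex, every saddle connection is contained in $core(X)$; combined with Lemma 4.5, Lemma 4.8 and Remark 4.9 this shows that the maximal geodesic arc system cuts $X$ into open two-cells which are precisely the $t$ interior faces (ideal triangles of $\mathcal{I}\mathcal{C}(X)$) and the $p$ domains of poles. Hence each of the two sides of each of the $|A|$ edges lies in exactly one of these two-cells, so counting the pairs (side of a saddle connection, two-cell containing it) in two ways gives $2|A|=3t+\beta$: the coefficient $3$ because each interior face is a triangle, and the contribution $\beta$ because a boundary saddle connection of the domain of $P_j$ contributes $2$ to that domain's side-count when both of its sides face the domain (there are $d_j$ such saddle connections) and $1$ otherwise (there are $e_j$ of them), so the domain of $P_j$ has $2d_j+e_j$ sides and $\sum_{j=1}^{p}(2d_j+e_j)=\beta$. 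Substituting $t=|A|-2g+2-n-p$ into $2|A|=3t+\beta$ then gives $|A|=6g-6+3n+3p-\beta$ after a one-line manipulation, the degenerate-core case $t=0$, $\beta=2|A|$ being included automatically.

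For the second assertion, since $g$, $n$, $p$ are fixed throughout the stratum, the formula just proved reduces it to showing that $\beta$ is constant on each chamber. I would argue that $\beta$ is determined by the embedded graph $\partial\mathcal{C}(X)\subset X$ alone: it only records how many saddle connections constitute $\partial\mathcal{C}(X)$ and, for each of them, which of the connected components of $X\setminus\partial\mathcal{C}(X)$ — namely $\mathcal{I}\mathcal{C}(X)$ and the $p$ domains of poles — its two sides face. This is part of the topological data of the embedded graph $\partial\mathcal{C}(X)$, which is invariant along the chamber by Proposition 4.15; hence $\beta$, and therefore $|A|$, is constant on each chamber. (Equivalently one can invoke Lemma 4.10: Proposition 4.15 fixes the topological type of each component of $\mathcal{I}\mathcal{C}(X)$ together with the conical singularities it carries, hence fixes the number of ideal triangles needed to triangulate it, hence fixes $t$.)

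The only delicate point is the side-counting bookkeeping. One must verify carefully that the complement of the arc system in $X$ is exactly the disjoint union of the interior faces and the domains of poles — this is where convexity of the core, which forbids any saddle connection from leaving it, is genuinely used — and that a saddle connection appearing as a "slit" sticking into a domain of poles is indeed counted with multiplicity two in $\beta$, in accordance with the definition of $d_j$. Everything else is a routine Euler-characteristic computation.
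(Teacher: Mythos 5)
Your proof is correct, and the first half takes a genuinely different route from the paper. The paper obtains the formula metrically: it computes the sum of interior angles of each pole domain via the degree of the Gauss map, $\phi_j = 2\pi d_j + \pi e_j + 2\pi\frac{b_j-k}{k}$, equates the total cone angle budget with $\pi t$ plus these boundary contributions, and uses the relation $\sum a_i - \sum b_j = k(2g-2)$ to get $t = 4g-4+2n+2p-\beta$ before invoking $|A| = 2g-2+n+p+t$. You instead replace all the angle bookkeeping by the purely combinatorial incidence count $2|A| = 3t+\beta$ (each of the two sides of each edge faces exactly one $2$-cell of the partial ideal triangulation, triangles contribute $3t$, pole domains contribute $\sum_j(2d_j+e_j)=\beta$), and then eliminate $t$ using the same Lemma 4.10; the two computations are equivalent, and your check that the degenerate-core case $t=0$, $\beta=2|A|$ is covered is right. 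Your version is more elementary and avoids any metric input beyond what is already packaged in Lemma 4.8 and the remark that the $2$-cells are exactly the $t$ ideal triangles and the $p$ pole domains — a point you correctly single out as the place where convexity of the core is needed; the paper's angle computation costs a little more but produces as a byproduct the explicit angle formula for the boundary of a pole domain, which is reused elsewhere (e.g.\ in the discussion around Propositions 7.2 and 7.7). For the chamber-invariance statement your argument coincides with the paper's: both reduce it to Proposition 4.15, you merely making explicit that $\beta$ (equivalently $t$) is determined by the topological data of the embedded graph $\partial\mathcal{C}(X)$ together with which complementary faces are pole domains.
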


\begin{proof}
We define $\phi_j$ as the sum of interior angles of the boundary of the domain of pole $P_j$.\newline
The degree of the Gauss map of a simple loop around a pole of order $b$ is $\dfrac{b-k}{k}$. This loop can be deformed in order to be as close as necessary to the boundary of the domain of the pole. The boundary of the domain of the pole is a broken line of saddle connections and therefore the sum of angular defects of the corners of the domain of the pole is $2\pi\dfrac{b-k}{k}$.\newline
The $d_j$ saddle connections whose two sides belong to the domain of the pole should be counted twice. Therefore, $\phi_j$ is given by the following formula:
$$ \phi_j = 2\pi d_j + \pi e_j + 2\pi\dfrac{b_{j}-k}{k}$$
Then, we get:
$$ \sum \limits_{j=1}^p \phi_j = 2\pi\sum \limits_{j=1}^p d_j + \pi\sum \limits_{j=1}^p e_j + 2\pi\sum \limits_{j=1}^p \dfrac{b_{j}-k}{k} $$

Let $t$ the number of ideal triangles in the triangulation of $core(X)$ given by the maximal geodesic arc system. The sum of the angles for $\mathcal{I}\mathcal{C}(X)$ is $\pi t$.\newline

The total sum of angles of conical singularities is $2\pi\sum \limits_{i=1}^n \dfrac{a_{i}+k}{k}$ so we have:
$$\pi t = 2\pi\sum \limits_{i=1}^n \dfrac{a_{i}+k}{k} - 2\pi\sum \limits_{j=1}^p d_j - \pi\sum \limits_{j=1}^p e_j - 2\pi\sum \limits_{j=1}^p \dfrac{b_{j}-k}{k}$$
Using $\sum \limits_{i=1}^n a_i - \sum \limits_{j=1}^p b_j = k(2g-2)$ and simplifying, we get:
$$ t = 4g-4+2n+2p - 2\sum \limits_{j=1}^p d_j - \sum \limits_{j=1}^p e_j$$
We have $|A|=2g-2+n+p+t$ (Lemma 4.10) so we get:
$$ |A| = 6g-6+3n+3p - 2\sum \limits_{j=1}^p d_j - \sum \limits_{j=1}^p e_j$$
The formula depends only on the shape of $\partial\mathcal{C}(X)$ so is an invariant of chambers.\newline
\end{proof}

As a corollary of Lemma 7.4, we finally get an upper bound on the number of edges of maximal geodesic arc systems. It is quite remarkable that the bound does not depend on the individual degrees of the zeroes and poles of the stratum.

\begin{cor} Let $|A|$ be the number of edges of a maximal geodesic arc system for a flat surface with poles of higher order $(X,\phi)$ of genus $g$ that belongs to $\mathcal{H}(a_1,\dots,a_n,-b_1,\dots,-b_p)$, then $|A| \leq 6g-6+3n+2p$.
\end{cor}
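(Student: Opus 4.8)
The plan is to read the bound off directly from Lemma 7.4. That lemma gives the exact identity $|A| = 6g-6+3n+3p-\beta$ with $\beta = 2\sum_{j=1}^{p} d_j + \sum_{j=1}^{p} e_j$, so the corollary is equivalent to the single inequality $\beta \geq p$: substituting $\beta \geq p$ into the identity yields $|A| \leq 6g-6+3n+3p-p = 6g-6+3n+2p$, which is exactly the claim. So the whole task reduces to bounding the boundary number from below by the number of poles of higher order.

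To establish $\beta \geq p$, I would show that $2d_j + e_j \geq 1$ for each pole $P_j$, i.e.\ that the boundary of every domain of pole contains at least one saddle connection. The quantity $2d_j + e_j$ is precisely the number of edges traversed by the boundary walk of the domain of $P_j$: a saddle connection both of whose sides face that domain is traversed twice, and one with a single side facing it is traversed once. By Lemma 4.5 the domain of $P_j$ is a topological disk, so this boundary walk is its piecewise-geodesic frontier, and its edges are saddle connections since the frontier lies in $\partial\mathcal{C}(X)$, which is a finite union of saddle connections by Proposition 4.4. This frontier cannot be empty: we assume $n>0$, hence there is at least one conical singularity and $core(X)\neq\emptyset$; a domain of pole with empty frontier would be a nonempty subset of the connected surface $X$ that is simultaneously open and closed, forcing it to equal $X$ and contradicting $core(X)\neq\emptyset$. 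Therefore the boundary walk of each domain of pole has length at least one, that is $2d_j+e_j\geq 1$.

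Summing over the $p$ poles gives $\beta = \sum_{j=1}^{p}(2d_j+e_j)\geq p$, and combining this with the identity of Lemma 7.4 completes the proof. The argument is essentially immediate once Lemma 7.4 is in hand; there is no real obstacle, and the only point deserving a word of justification is the nonemptiness of the boundary of each domain of pole, which is exactly where the standing hypothesis $n>0$ enters.
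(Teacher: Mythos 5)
Your proposal is correct and follows exactly the paper's own route: the paper proves Corollary 7.5 by combining the identity of Lemma 7.4 with the observation that every domain of pole is bounded by at least one saddle connection, hence $\beta \geq p$. Your only addition is a more explicit justification of this last point (via Lemma 4.5, Proposition 4.4 and the hypothesis $n>0$), which the paper takes for granted.
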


\begin{proof}
Every domain of pole is bounded by at least one saddle connection so we have $\beta \geq p$. Using Lemma 7.4, we get:
$$|A| \leq 6g-6+3n+3p-\beta \leq 6g-6+3n+2p$$
\end{proof}

The bound is not sharp. In many cases, it can be slightly improved. The following proposition provides some examples.

\begin{prop}
Let $(X,\omega)$ be a translation surface with poles of $\mathcal{H}^{1}(a_1,\dots,a_n,-b_1,\dots,-b_p)$. Let $|A|$ be the number of edges of a maximal arc geodesic system. We have:\newline
- $|A| \leq 6g+3n-5$ if $p=1$ and $g \geq 1$.\newline
- $|A| \leq 3n-6$ if $p=1$, $n \geq 3$ and $g=0$.\newline
- $|A| \leq 3n-3$ if $p=2$, $n \geq 2$ and the stratum is irreducible (in particular, $g=0$).
\end{prop}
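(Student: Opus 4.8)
The three bounds are refinements of Corollary 7.5, and they all rest on Lemma 7.4, which gives $|A| = 6g-6+3n+3p-\beta$ where $\beta = 2\sum_j d_j + \sum_j e_j$ is the boundary number, together with the trivial estimate $\beta \geq p$. So the whole task is to improve $\beta \geq p$ to $\beta \geq p+1$ in the first case and to $\beta \geq p+2$ in the other two. The plan is to do this case by case, each time arguing by contradiction on the smallest admissible value of $\beta$, and exploiting throughout that each domain of a pole is a topological disk (Lemma 4.5) whose boundary is a closed walk of saddle connections of combinatorial length $2d_j+e_j$, and that all conical singularities lie in $core(X)$.

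\emph{Case $p=1$.} Here I claim $\beta \geq 2$ for every genus, which yields $|A| = 6g-3+3n-\beta \leq 6g+3n-5$. Suppose $\beta = 1$, so $d_1=0$ and $e_1=1$; then the boundary of the unique domain of pole $D_1$ is a single saddle connection traversed once, i.e.\ a geodesic loop $\gamma$ based at one conical singularity, bounding the disk $D_1$ which contains the pole $P_1$ and no other singularity of $\omega$ (the conical singularities lie in the core, and for $k=1$ they are zeroes). Homotoping $\gamma$ inside $\overline{D_1}\setminus\{P_1\}$ to a small loop around $P_1$ and applying the residue theorem, $\int_\gamma \omega = 2\pi i\,\mathrm{Res}_{P_1}(\omega)$, and since $P_1$ is the only pole of $\omega$ this residue is zero. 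Hence the holonomy vector of $\gamma$ vanishes, contradicting that $\gamma$ is a nondegenerate geodesic segment. So $\beta\geq 2$, which proves the first inequality (a fortiori for $g\geq 1$).

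\emph{Case $p=1$, $g=0$, $n\geq 3$.} Now I want $\beta \geq 3$, i.e.\ $|A| = 3n-3-\beta \leq 3n-6$. If $core(X)$ is degenerate then $\partial\mathcal{C}(X)=core(X)$ is a connected graph whose complement is the single disk $D_1$; the Euler characteristic of the sphere forces this graph to be a tree, and since all $n$ conical singularities are among its vertices it has at least $n-1$ edges, each carrying $D_1$ on both sides, so $\beta\geq 2(n-1)$ and $|A|\leq n-1\leq 3n-6$ because $n\geq 3$. If $core(X)$ is not degenerate I rule out $\beta=2$: the configuration $d_1=1,\,e_1=0$ would make $\partial\mathcal{C}(X)$ a single saddle connection with $D_1$ on both sides, forcing $\overline{\mathcal{I}\mathcal{C}(X)}$ to be a closed component disjoint from the rest and contradicting $D_1\neq\emptyset$; the configuration $d_1=0,\,e_1=2$ makes $\partial\mathcal{C}(X)$ a closed walk of two saddle connections bounding the topological disk $\overline{\mathcal{I}\mathcal{C}(X)}$, and Gauss--Bonnet on that disk gives $\theta_1+\theta_2 = -2\pi\sum a_i \leq 0$, where $\theta_1,\theta_2$ are the two positive interior angles and the sum runs over the interior conical singularities --- impossible. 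Hence $\beta\geq 3$.

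\emph{Case $p=2$, $n\geq 2$, $\mathcal{H}$ irreducible (so $g=0$).} Again $\beta\geq 2$, and I exclude $\beta=2$. In that case $d_1=d_2=0$ and $e_1=e_2=1$, so each domain $D_j$ is bounded by a single saddle connection $\gamma_j$. If $\gamma_1=\gamma_2$, then $core(X)$ is this one saddle connection, so $|SC|=1$, contradicting $|SC|\geq |A|\geq 2g-2+n+p = n\geq 2$. If $\gamma_1\neq\gamma_2$, then $\overline{\mathcal{I}\mathcal{C}(X)}$ is a nonempty annulus bounded by $\gamma_1\sqcup\gamma_2$ (the pinched boundary configurations being excluded by the Gauss--Bonnet argument of the previous case); since $k=1$ the bounded region $\mathcal{I}\mathcal{C}(X)$ has trivial linear holonomy, so $\gamma_1$ and $\gamma_2$ are parallel by Lemma 3.6, and in their common direction every leaf of the foliation of $\mathcal{I}\mathcal{C}(X)$ stays inside $\mathcal{I}\mathcal{C}(X)$. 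Cutting along the saddle connections in that direction then produces a finite volume invariant component of positive area inside the core, hence a finite volume cylinder by Lemma 5.12, which contradicts the irreducibility of $\mathcal{H}$ by Corollaries 2.4 and 5.13. So $\beta\geq 3$ and $|A| = 3n-\beta \leq 3n-3$. The step I expect to be the most delicate is precisely this annular subcase: one must make sure the foliation of the annulus $\mathcal{I}\mathcal{C}(X)$ in the direction common to $\gamma_1$ and $\gamma_2$ genuinely sweeps out a finite volume cylinder, or a minimal component to which Lemma 5.12 applies, rather than degenerating onto the two boundary saddle connections; the Gauss--Bonnet bookkeeping and the enumeration of low-$\beta$ boundary configurations in the middle case are routine by comparison.
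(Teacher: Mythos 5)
Your proof is correct and follows the same skeleton as the paper's: both start from Lemma 7.4, $|A| = 6g-6+3n+3p-\beta$, and upgrade the trivial bound $\beta \geq p$ to $\beta \geq 2$ (first case) and $\beta \geq 3$ (other two cases). Your first case is exactly the paper's argument: $\beta=1$ forces the boundary of the unique pole domain to be a closed saddle connection whose period is (up to sign) $2\pi i$ times the residue at the only pole, which vanishes. Your third case also matches the paper's stated reason: a nondegenerate core bounded by two closed saddle connections is swept by the foliation in their common direction, producing a finite-volume invariant component, hence a finite-volume cylinder via Lemma 5.12, contradicting irreducibility (Theorem 6.10 or Corollaries 2.4 and 5.13); your preliminary subcase $\gamma_1=\gamma_2$ is a harmless addition. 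The only genuine divergence is the middle case: the paper handles it with the same one-line no-cylinder argument (the stratum with $g=0$, $p=1$ is irreducible by Proposition 6.6), whereas you exclude $\beta=2$ unconditionally, using an Euler-characteristic count in the degenerate case, a connectivity argument for $(d_1,e_1)=(1,0)$, and Gauss--Bonnet for $(d_1,e_1)=(0,2)$. Your route is more self-contained and more detailed than the paper's terse sentence, at the cost of a case analysis; both are valid. The worry you flag at the end is unfounded: the core is compact of positive area and its boundary leaves lie in the common direction of $\gamma_1,\gamma_2$, so cutting along all saddle connections in that direction yields at least one finite-volume component of positive area inside the core, and Proposition 5.5 together with Lemma 5.12 then gives the cylinder.
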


\begin{proof}
In the specific case $p=1$, $|A|=6g+3n-4$ means $\beta=1$ so the boundary of the domain of the pole is a unique saddle connection. However, the residue at the pole is zero. This is a contradiction so we can slightly enhance the bound ($\beta \geq 2$).\newline
In the cases where there is no cylinder in the stratum (especially when strata are irreducible, see Definition 2.2), $core(X)$ cannot have two saddle connections as a boundary. So we have $\beta \geq 3$.\newline
\end{proof}

Following Theorem 7.1, the lower bound for $|A|$ and $|SC|$ may be attained only on very singular surfaces. Proposition 7.7 shows that staying away from the discriminant $\Sigma$ we may obtain different lower bounds for $|A|$. We consider the specific example of $k$-differentials whose poles of higher order are all of order $k$, see Figure 19 for an example. Generic differentials in such strata always display an optimal number of noncrossing saddle connections.

\begin{figure}
\includegraphics[scale=0.3]{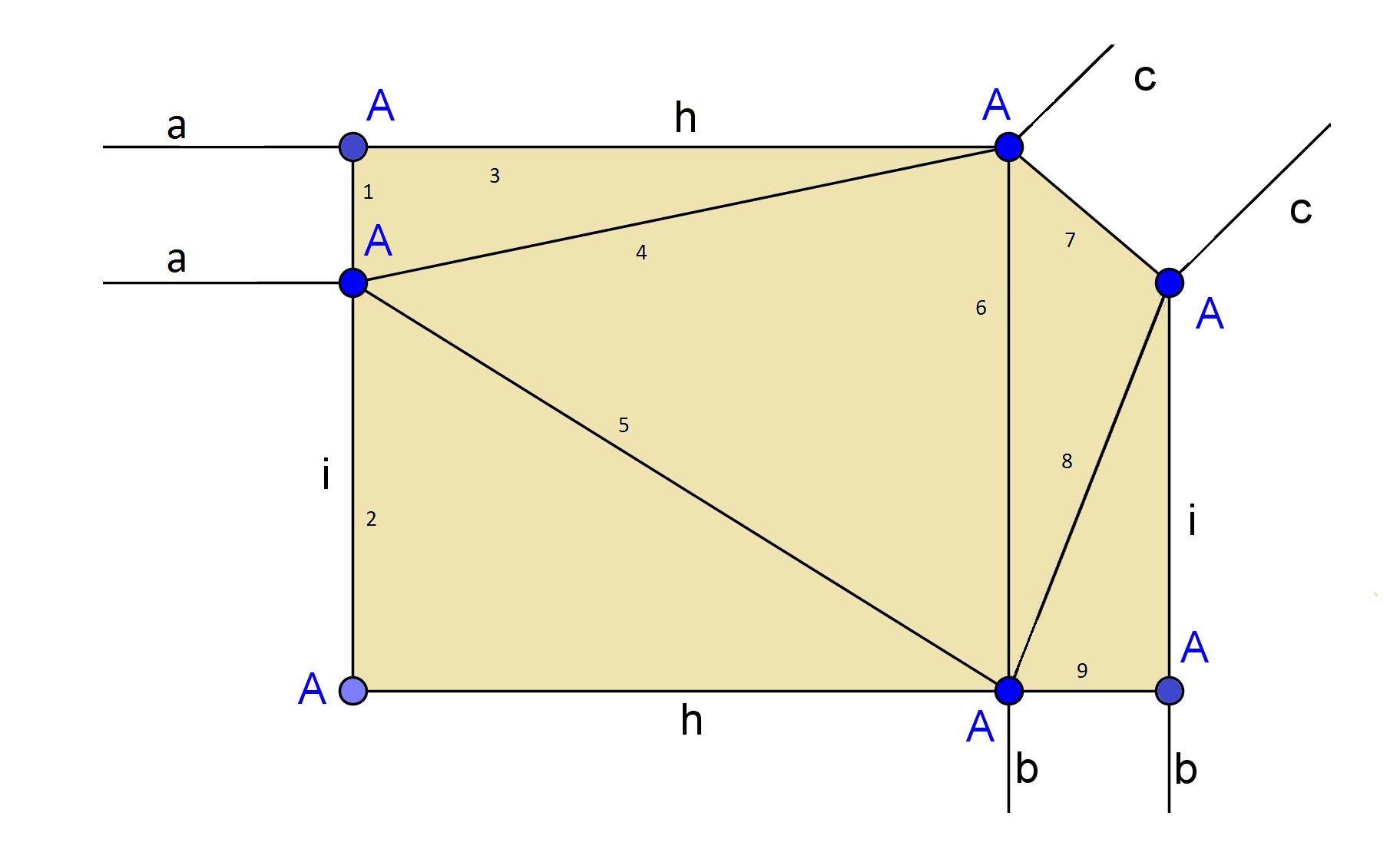}
\caption{A surface in $\mathcal{H}^{1}(3,-1^{3})$ with a maximal geodesic arc system including 9 saddle connections drawn on it}
\end{figure}

\begin{prop} Let $|A|$ be the number of edges of a maximal geodesic arc system for a flat surface with poles of higher order $(X,\phi)$ of genus $g$ belonging to $\mathcal{H}^{k}(a_1,\dots,a_n,-k^{p}) \setminus \Sigma$, then:\newline
- if $k$ is odd, we have $|A| = 6g-6+3n+2p$,\newline
- if $k$ is even and $p\geq2$, we have $|A| = 6g-6+3n+2p$,\newline
- if $k$ is even and $p=1$, we have $|A| \geq 6g+3n-5$.
\end{prop}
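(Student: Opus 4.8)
The plan is to reduce the statement, via Lemma 7.4 and Corollary 7.5, to a question about the boundary of the domain of each pole, and then to exploit that $(X,\phi)$ lies off the discriminant $\Sigma$. By Lemma 7.4, $|A|=6g-6+3n+3p-\beta$ with $\beta=2\sum_{j=1}^{p}d_j+\sum_{j=1}^{p}e_j\geq p$, and Corollary 7.5 gives $|A|\leq 6g-6+3n+2p$, with equality exactly when $\beta=p$, i.e. when the boundary of the domain of every pole $P_j$ is a single saddle connection. So the whole statement reduces to bounding the number $2d_j+e_j$ of boundary sides of the domain of each $P_j$.

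First I would revisit the angular identity used in the proof of Lemma 7.4. When $b_j=k$ the angular excess $2\pi\frac{b_j-k}{k}$ vanishes, so the interior angles of the boundary of the domain of $P_j$ sum to $\pi(2d_j+e_j)$, that is, to $\pi$ times its number of corners. Since every such interior angle is at least $\pi$ (Proposition 4.11), each of them is exactly $\pi$. Hence, if the boundary of the domain of $P_j$ had at least two saddle connections, it would contain two consecutive ones meeting at an angle of $\pi$; as $(X,\phi)\notin\Sigma$ these must be parallel, and running around the boundary (parallelism being transitive on nonzero homology classes) all of its saddle connections would be parallel, so that this boundary would be an embedded closed broken geodesic that is straight at each of its vertices.

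The heart of the argument, and the point where the parity of $k$ enters, is to exclude this except in one exceptional case. I would invoke Lemma 3.6: two consecutive parallel boundary saddle connections $s,s'$ force one of the bounded connected components of $X\setminus(s\cup s')$ to have trivial linear holonomy. Since the boundary is embedded, a short topological discussion (three or more consecutive straight parallel sides would collapse their common endpoints and create a self-intersection) shows this can only occur when the boundary consists of exactly two saddle connections, when $p=1$, and when one of the two components of $X$ cut along this boundary is the domain of $P_1$ while the other is the core, which is then bounded. A Gauss--Bonnet count on that other component then forces it to carry no interior conical singularity (a trivial-holonomy piece could only contain zeros of order divisible by $k$, and the angle constraints at the two corners leave no room for one), hence $g=0$, and forces that component to be the degenerate bigon between two conical singularities of order $-k/2$: $core(X)$ is a segment and the domain of $P_1$ is attached to it as the doubled boundary of a half-cylinder. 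This configuration exists only when the rotation by $\pi$ lies in the linear holonomy group, i.e. only when $k$ is even, and it does occur in $\mathcal{H}^{k}(-k/2,-k/2,-k)\setminus\Sigma$. Putting this together: if $k$ is odd, or if $k$ is even and $p\geq 2$, the boundary of each domain of pole is a single saddle connection, so $\beta=p$ and $|A|=6g-6+3n+2p$; if $k$ is even and $p=1$, then $\beta\leq p+1=2$, hence $|A|\geq 6g-6+3n+3-2=6g+3n-5$.

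\textbf{Main obstacle.} The delicate step is the topological case analysis bounding the number of boundary saddle connections of the domain of a pole and singling out the half-cylinder configuration: one must carefully rule out embedded straight closed broken geodesics with three or more vertices off the discriminant, keep track of which side of a separating boundary curve is bounded, and combine the holonomy constraint from Lemma 3.6 with Gauss--Bonnet to pin the exceptional stratum down to $\mathcal{H}^{k}(-k/2,-k/2,-k)$ — which at the same time shows that the bound $6g+3n-5$ in the case $k$ even, $p=1$ is attained.
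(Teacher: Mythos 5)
Your overall skeleton is the same as the paper's: reduce to the boundary number via Lemma 7.4, observe that for a pole of order exactly $k$ every interior angle of the boundary of its domain equals $\pi$, conclude off $\Sigma$ that any two consecutive boundary saddle connections are parallel, invoke Lemma 3.6 to constrain the exceptional situation, and let the parity of $k$ decide whether it can occur. The conclusions you draw (each pole domain bounded by a single closed saddle connection unless $k$ is even and $p=1$, in which case $\beta\leq 2$) are exactly the paper's. However, there is a genuine error in your treatment of the exceptional case. Your Gauss--Bonnet step, claiming that the bounded trivial-holonomy component can carry no conical singularity, that $g=0$, and that the configuration is forced to be the degenerate bigon with two cone points of order $-k/2$ (so that it occurs only in $\mathcal{H}^{k}(-k/2,-k/2,-k)$), is false. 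The paper's own Figure 20 example in $\mathcal{H}^{2}(2,-2)$ is a counterexample: there $g=1$, the unique pole of order $2$ has a domain bounded by two parallel saddle connections, and the trivial-holonomy component is a flat annulus whose two boundary circles are those saddle connections, with a single zero of order $2$ at the corners --- no cone points of order $-k/2$ and no degenerate bigon. Your Gauss--Bonnet count implicitly assumes a particular boundary structure (one boundary circle, no cancellation between corner defects and genus), and it simply does not pin the component down; larger trivial-holonomy pieces, with interior zeros of order divisible by $k$ and positive genus, are possible.

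This matters because the crux of the proposition --- why $k$ odd (and, together with the $p=1$ conclusion, why $p\geq 2$) excludes the exceptional case --- is justified in your write-up by the integrality of the order $-k/2$ of the bigon's cone points, i.e.\ by the erroneous classification. The paper's argument needs no classification at all: the two parallel consecutive saddle connections meet at an angle of $\pi$, so a rotation by $\pi$ would have to be realized by the linear holonomy of the $k$-differential, which consists of rotations by multiples of $2\pi/k$; this is impossible when $k$ is odd. You do write the phrase ``rotation by $\pi$ lies in the linear holonomy group,'' but as presented it is a consequence of the false bigon structure rather than an independent argument, so the dichotomy is not correctly established as written. (Your parenthetical dismissal of three or more parallel boundary sides by ``collapse and self-intersection'' is also not a proof, though the paper's own treatment of that point, via Lemma 3.6 applied to each consecutive pair, is likewise brief.) Replacing the Gauss--Bonnet classification by the direct holonomy argument repairs the proof and brings it in line with the paper; the claim that the exceptional case occurs only in $\mathcal{H}^{k}(-k/2,-k/2,-k)$, and the attendant sharpness remark, should be dropped.
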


\begin{proof}
Domains of poles of order $k$ are semi-infinite cylinders and two consecutive saddle connections of the boundary share an angle of $\pi$. Outside the discriminant, if there are several saddle connections in the boundary of the domain of a pole of order $k$, then they are parallel saddle connections. Otherwise, the boundary is a single closed saddle connection.\newline
Following Lemma 3.6, two saddle connections of the boundary are parallel if and only if they form the boundary of a surface with trivial holonomy. In this case, the geometry of the surface is very constrained, see Figure 20. It comprises two parts:\newline
- a unique domain of pole whose boundary is a pair of parallel saddle connections,\newline
- the interior of the core that is a connected domain with trivial holonomy.\newline

\begin{figure}
\includegraphics[scale=0.3]{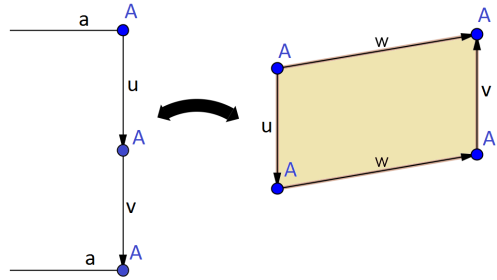}
\caption{A flat surface of $\mathcal{H}^{2}(2,-2)$.}
\end{figure}

Since the two saddle connections of the boundary share an angle of $\pi$, a rotation of angle $\pi$ should be realizable by the holonomy of the flat surface. Therefore, such a flat surface cannot exist when $k$ is odd.\newline
Outside the discriminant, when the domains of poles are bounded by a single closed saddle connection, we have $\beta=p$ and $|A|=6g-6+3n+2p$ (Lemma 7.4), see Figure 21.\newline
When $p=1$, the boundary of the unique pole can be formed by one or two saddle connections so we have $\beta\leq2$. Indeed, if there were three saddle connections in the boundary of a pole of order $k$ for a differential that does not belong to the discriminant, any consecutive pair of these saddle connections should be parallel. Following Lemma 3.6, these pairs of saddle connections should be the boundary of a subsurface. Such an assumption cannot hold for each pair of saddle connections. Thus we have, $|A|=6g-6+3n+3p-\beta \geq 6g+3n-5$.
\end{proof}

\begin{figure}
\includegraphics[scale=0.3]{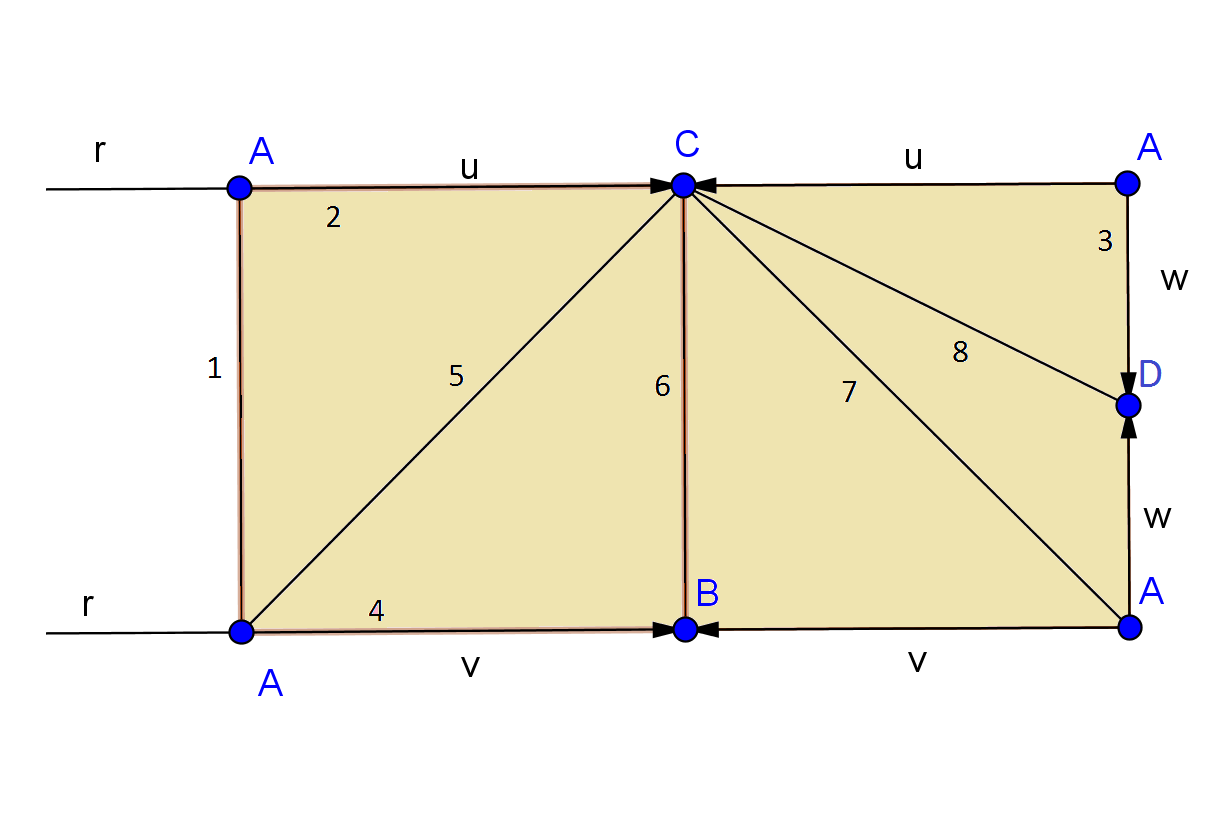}
\caption{A surface in $\mathcal{H}^{2}(1,-1^{3},-2)$ with a maximal geodesic arc system including 8 saddle connections drawn on it. The zero of order one is $A$ while the three poles of order one are $B$, $C$ and $D$.}
\end{figure}

\begin{rem}
For an abelian differential, the number of edges of a maximal geodesic arc system is $6g-6+3n$, see Proposition 2.1 in \cite{Vo}.
\end{rem}

Now that we have lower and upper bound on the maximal number of noncrossing saddle connections, it is natural to ask if every natural number in the interval is realized in the stratum. Some examples show it is not always true.

\begin{ex}
Let $(X,\omega)$ be a translation surface with poles of $\mathcal{H}^{1}(1^{2},-2^{2})$ and $|A|$ the number of edges of a maximal geodesic arc system. There are two types of surfaces:\newline
- $|A|=2$, $core(X)$ is degenerate and $|SC|=2$, see Figure 22.\newline
- $|A|=4$, $core(X)$ is a finite volume cylinder and $|SC|=+\infty$, see Figure 23.
\end{ex}

\begin{figure}
\includegraphics[scale=0.3]{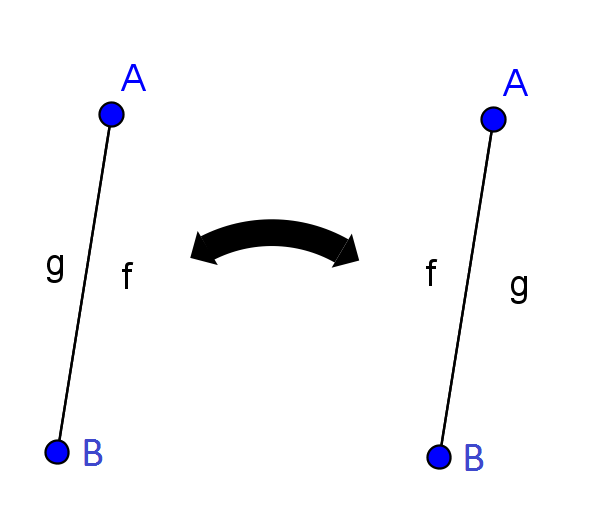}
\caption{A surface in $\mathcal{H}^{1}(1^{2},-2^{2})$ with degenerate core. It is formed by two flat planes with slits identified.}
\end{figure}

\begin{figure}
\includegraphics[scale=0.3]{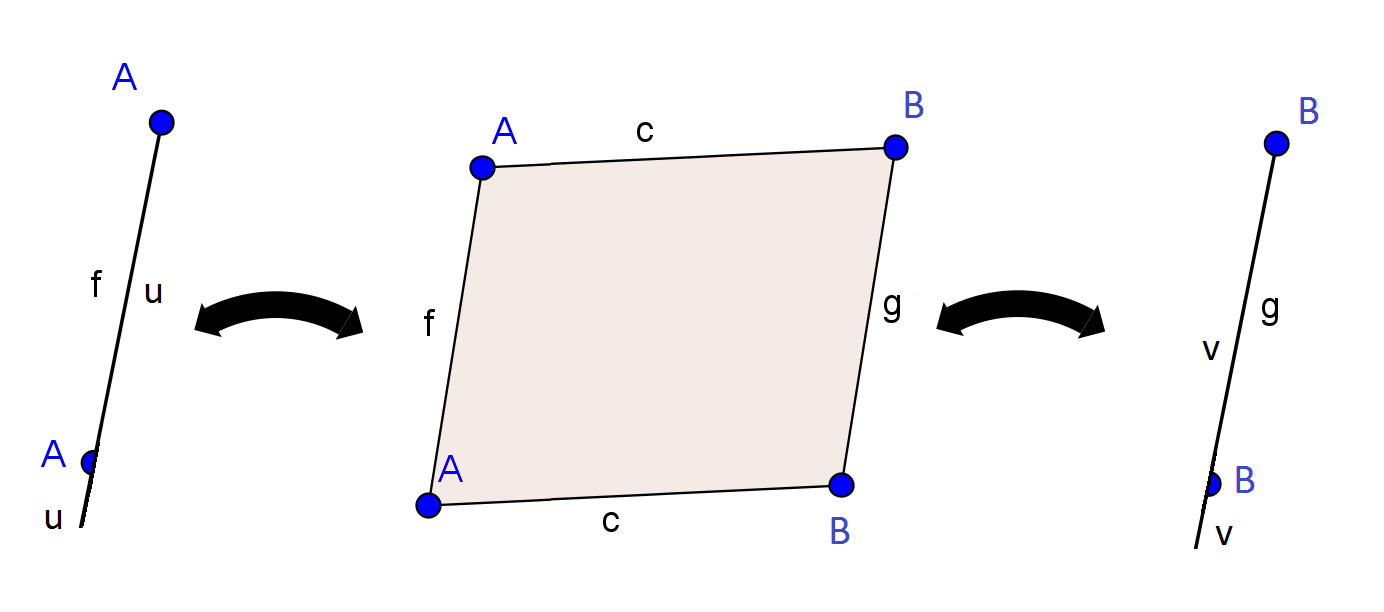}
\caption{A surface in $\mathcal{H}^{1}(1^{2},-2^{2})$ whose core is a cylinder. The two domains of poles are flat planes with slits identified with the boundary components of the cylinder.}
\end{figure}

\begin{proof}
Proposition 7.1 and Corollary 7.5 imply that $2 \leq |A| \leq 4$. $|A|=2$ if and only if $core(X)$ is degenerate. In this case, $|A|=|SC|=2$.\newline
\newline
Lemma 7.4 gives restrictions on the maximal number of noncrossing saddle connections:
$$ |A| = 6g-6+3n+3p - 2\sum \limits_{j=1}^p d_j - \sum \limits_{j=1}^p e_j$$

If $|A|=4$, then the boundary of each of the two domains of poles is a single closed saddle connection and their holonomy vectors are opposed. Therefore, $core(X)$ is not degenerate and is an invariant component for the flow in this direction. So $core(X)$ is a cylinder and $X$ has infinitely many saddle connections.\newline
\newline
We show now that the case $|A|=3$ is not possible in this stratum. Following the same equation, $core(X)$ would be an ideal triangle. One of its edges would be a closed saddle connection joining conical singularity $x_{1}$ to itself. This edge would be the whole boundary of the domain of pole $y_{1}$. Consequently, the domain of pole $y_{2}$ would be bounded by the two other sides of the ideal triangle. These sides would be saddle connections joining $x_{1}$ to $x_{2}$. The angle of $x_{1}$ in the domain of pole $y_{1}$ is $3\pi$ (using the topological order of the loop defined by the closed saddle connection). Conical singularity $x_{1}$ also belongs to the boundary of the domain of $y_{2}$ and since the core is convex, the angle is at least $\pi$ (Proposition 4.11). There is also a contribution of two interior angles of the triangles. Therefore, the total angle of $x_{1}$ is strictly bigger than $4\pi$. So $|A|=3$ is not possible.\newline
\end{proof}

\section{Dichotomy between chambers}

The main theorem of this section is only about strata of $1$-forms. Following Corollary 2.4, all surfaces have finitely many saddle connections in a few strata (when $g=\kappa=0)$. On the contrary, in the other ones, some surfaces have infinitely many saddle connections. The previous theorems about the dynamics in translation surfaces with poles give information to distinguish the different families of strata.\newline
\newline
The following examples show that different surfaces of the same stratum can have either finite or infinite number of saddle connections.

\begin{ex} In $\mathcal{H}^{1}(2,-2)$, some surfaces have a finite volume cylinder so have an infinite number of saddle connections and some have a finite number of saddle connections, see Figure 4 and Figure 16.

\begin{figure}
\includegraphics[scale=0.3]{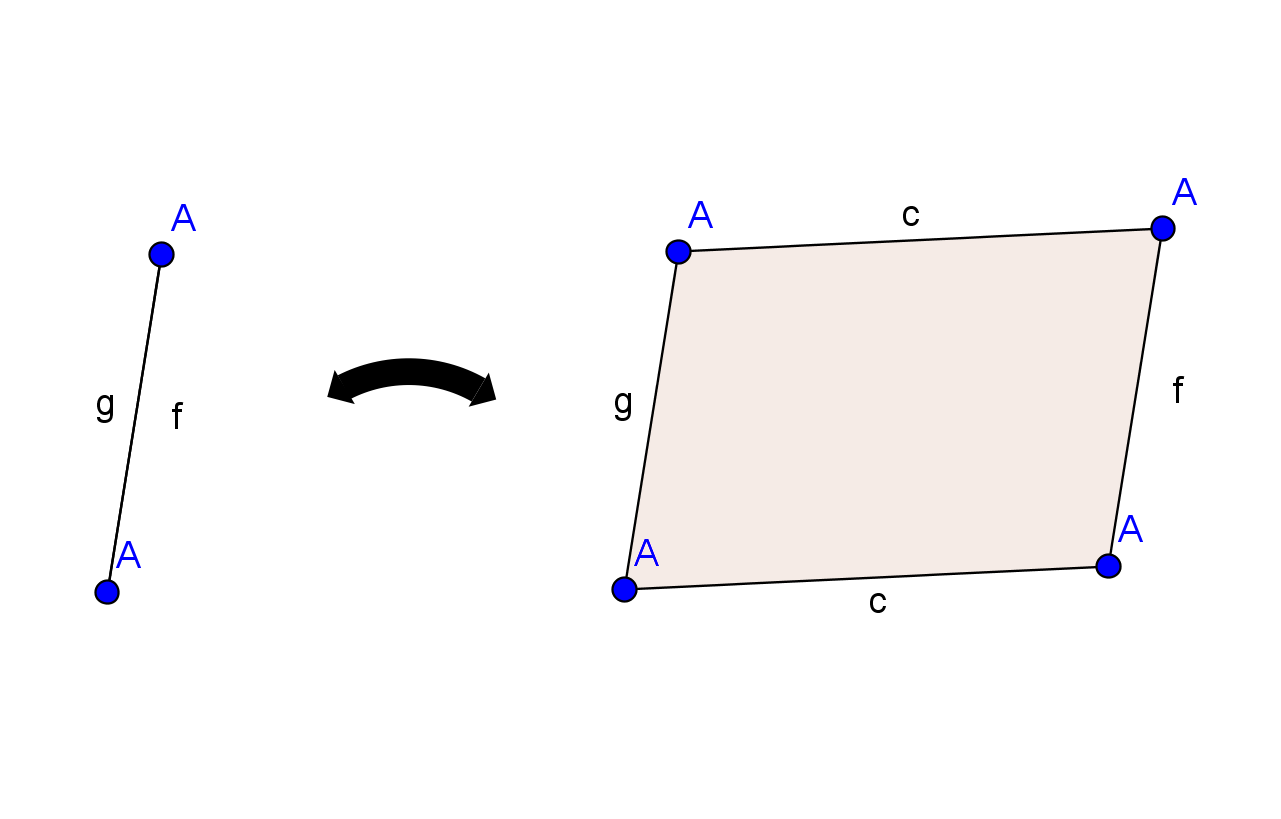}
\caption{A flat surface $\mathcal{H}^{1}(2,-2)$ that belongs to the chambers where the core is a cylinder.}
\end{figure}

\end{ex}

Previous results opened the way to a conditional upper bound on the number of saddle connections of a flat surface with poles of higher order. Corollary 5.13 proves that a flat surface where there is no finite volume cylinder has a finite number of saddle connections. Theorem 2.5 provides an explicit upper bound bound on the number of saddle connections for surfaces in chambers where where no flat surface has a finite volume cylinder. Theorem 2.5 is only about strata of $1$-forms.\newline
In flat surfaces corresponding to quadratic differentials, the orientation of saddle connection is not well defined so a saddle connection may cross another one in both directions. This complicates the counting of saddle connections and prevents obtaining a similar bound as in the case $k=1$. When $k\geq3$, saddle connections and periodic geodesics may be self-intersecting so the geometry of the cylinders is far more complicated. An analog result would need specific tools.\newline

Contrary to the case of strata where existence of surfaces with an infinite number of saddle connections has a combinatorial characterization (see Section 6), there is no easy characterization of chambers in terms of topological properties of the core. Existence of a cylinder of finite area implies existence of a closed simple loop whose topological degree is zero and whose homology class is nonzero. However, it is not proved that these conditions are sufficient.

\begin{proof}[Proof of Theorem 2.5]
Let $(X,\phi)$ be a flat surface in a chamber $\mathcal{C}$ of a stratum $\mathcal{H}$. We are going to show that a high number of saddle connections in $X$ implies existence of a cylinder for a surface in the same chamber as $(X,\phi)$.\newline

We first suppose there is a simple closed loop $\alpha$ (without self-intersection) that satisfies the following conditions:\newline
- $\alpha$ passes only through regular points of $\mathcal{I}\mathcal{C}(X)$,\newline
- the topological index of $\alpha$ is equal to zero,\newline
- $\alpha$ is the union of two subintervals of saddle connections and two turns (see Figure 25),\newline
- the angular defect (deviation from $\pi$) at the turns are strictly less than $\pi$.\newline

We replace curve $\alpha$ inside $X$ by a flat ribbon whose boundary is formed by two curves that are isometric to $\alpha$. If the ribbon is thick enough (compared to the length of $\alpha$), then there is a periodic geodesic inside it, see Figure 25. This periodic geodesic is a waist curve of a finite volume cylinder.\newline

\begin{figure}
\includegraphics[scale=0.3]{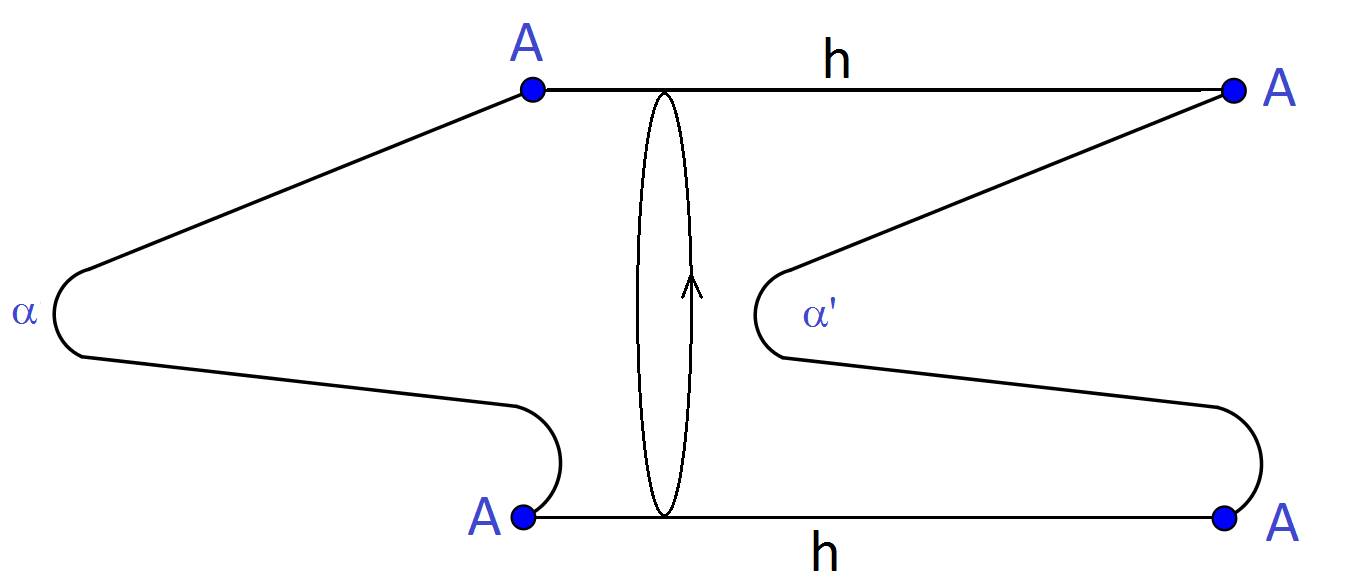}
\caption{A thick ribbon whose boundary is a pair of isometric curves $\alpha$ and $\alpha'$. It contains a family of closed geodesics.}
\end{figure}

The surgery is inserting a wide ribbon inside the interior of the core. Therefore, it does not change the structure of the core so the surface we get is still in chamber $\mathcal{C}$. As there is a finite volume cylinder, the new surface has infinitely many saddle connections (Corollary 5.13).\newline

Then, we prove an upper bound on the number of saddle connections of flat surfaces where there is no such closed loop $\alpha$.\newline

We consider a maximal geodesic arc system on $(X,\phi)$. Following Lemma 4.8, we get an ideal triangulation of $core(X)$. We denote by $t_1,\dots,t_s$ the numbers of ideal triangles in every connected component of the interior of $core(X)$. We have $t=\sum \limits_{{i=1}}^s t_i$. Following Lemma 4.10, we have $|A|=2g-2+n+p+t$.\newline

Every saddle connection that does not belong to the chosen maximal geodesic arc system go through a string of ideal triangles. There is no saddle connection that start and end in the same ideal triangle. Such a saddle connection either starts and ends in the same corner of the ideal triangle or intersect itself in the interior of the ideal triangle. Since the direction of a saddle connection is unambiguous if $k=1$, there is no such saddle connection.\newline

We consider a pair of distinct ideal triangles $r$ and $r'$ in the the same connected component of $\mathcal{I}\mathcal{C}(X)$. We suppose there are two distinct saddle connections $\gamma$ and $\gamma'$ that begin and end in these ideal triangles. Either the two saddle connections end in the same corner of the ideal triangle or they intersect each other in the interior of the ideal triangle. Following $\gamma$ from $r$ to $r'$ and $\gamma'$ from $r'$ to $r$ defines a path. If $\gamma$ and $\gamma'$ intersect in some interior points, we consider only portions of these saddle connections such that we get a simple closed loop $\alpha$. The angular defect at the two turning points are strictly less than $\pi$. Therefore, the topological index of $\alpha$ is zero. We get a simple loop that have all the needed properties.\newline
If there is no surface with an infinite number of saddle connection in the chamber, then for every couple distinct of ideal triangles in the same connected component of the core, there are at most one saddle connection that begins and ends in the given ideal triangles. Therefore, we have $|SC|\leq|A|+\dfrac{1}{2}\sum \limits_{{i=1}}^s t_{i}(t_{i}-1)$.\newline
\end{proof}

One can construct many surfaces by pasting boundaries of domains of poles on the sides of a convex k-gon. As there are $\dfrac{k(k-1)}{2}$ segments in a convex k-gon, a quadratic exponent is optimal.

\begin{ex}
In $\mathcal{H}^{1}(p-2,-1^{p})$ with $p\geq3$, there are surfaces with $\dfrac{p(p-1)}{2}$ saddle connections, see Figure 7.
\end{ex}

Corollary 2.6 provides a characterization of strata where every chamber is of bounded type. They are exactly irreducible strata. This result holds when $k=1$.

\begin{proof} [Proof of Corollary 2.6]
If $\mathcal{H}$ is an irreducible stratum, then every surface has finitely many saddle connections (Corollary 2.4). If there were a chamber of unbounded type in $\mathcal{H}$, there would be a surface with infinitely many saddle connections (Theorem 2.5). So every chamber is of bounded type.\newline
If $\mathcal{H}$ is a reducible stratum, then there is a surface $(X,\omega)$ in $\mathcal{H}$ with infinitely many saddle connections (Theorem 2.5). The locus in $\mathcal{H}$ where $|SC|=+\infty$ is an open set (Corollary 5.13). Besides, the discriminant has real codimension one (Proposition 4.14). Therefore, there is a at least a chamber where there is a surface with infinitely many saddle connections. This chamber is of unbounded type.\newline
As the number of saddle connections changes lower continuously in the stratum and the bound of Theorem 2.5 is satisfied in the chambers (in the case $k=1)$, the bound is satisfied in discriminant as well. Therefore, the bound is satisfied in the whole stratum. Following Corollary 7.7, we have $t\leq 4g-4+2n+p$ so we get the adequate bound from the bound of Theorem 2.5.
\end{proof}

For some strata, having an infinite number of saddle connections is a generic property and in particular is true for every chamber that is outside the discriminant.

\begin{prop}
Let $(X,\omega)$ be a translation surface with poles of genus $g \geq 1$ belonging to $\mathcal{H}^{1}(a_1,\dots,a_n,-1^2)\  \setminus\ \Sigma$. Then $(X,\phi)$ has an infinite number of saddle connections.
\end{prop}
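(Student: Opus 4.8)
The plan is to exhibit a finite volume cylinder inside $(X,\omega)$ and then quote Corollary~5.13, which equates $|SC|=+\infty$ with the existence of such a cylinder. The starting observation is that $X$ has exactly two poles, both simple, so the vanishing of the sum of the residues forces the residue at $P_1$ to be some $r\in\mathbb{C}^{\ast}$ and the residue at $P_2$ to be $-r$ (a simple pole has nonzero residue, cf. Section~3.2). Since $k=1$ and $X\notin\Sigma$, I would invoke the argument from the proof of Proposition~7.7: the domain of each pole $P_i$ is a semi-infinite cylinder whose boundary is a \emph{single} closed saddle connection $c_i$, for otherwise the boundary would contain two consecutive parallel saddle connections meeting at an angle $\pi$, and by Lemma~3.6 they would bound a subsurface with trivial holonomy realizing a rotation by $\pi$ --- impossible when $k$ is odd. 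Being the corner-free boundary of a flat half-cylinder, $c_i$ is a closed geodesic; being freely homotopic to a small loop around $P_i$, its holonomy is the residue at $P_i$. Hence $c_1$ and $c_2$ are parallel closed geodesics of length $|r|$ carried by a single direction $\theta$, and $\partial\mathcal{C}(X)=c_1\cup c_2$.

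Next I would record that $core(X)$ is not degenerate. Removing from the closed genus-$g$ surface underlying $X$ the two open disks $D_1,D_2$ forming the domains of the poles (Lemma~4.5) gives $\chi(core(X))=(2-2g)-2=-2g<0$ since $g\geq1$, whereas $\chi(c_1\cup c_2)=0$; so $core(X)\neq\partial\mathcal{C}(X)$, i.e. $\mathcal{I}\mathcal{C}(X)\neq\emptyset$ and $core(X)$ has positive area. Then I would cut $X$ along every saddle connection carried by $\theta$ and apply Proposition~5.5. The domains $D_1$ and $D_2$ are themselves invariant components, each an infinite volume cylinder bounding a simple pole; since $P_1$ and $P_2$ are the only poles of $X$ and are already accounted for by $D_1$ and $D_2$, there is no free component and no further infinite volume cylinder, so every remaining invariant component has finite volume. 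As $\mathcal{I}\mathcal{C}(X)=core(X)\setminus\partial\mathcal{C}(X)$ has positive area and is tiled by these components, at least one finite volume invariant component exists; if it is a cylinder we are done, and if it is a minimal component Lemma~5.12 produces a finite volume cylinder inside it. In both cases $X$ contains a finite volume cylinder, so $|SC|=+\infty$ by Corollary~5.13.

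The one genuinely delicate point is the very first step: that away from the discriminant the boundary of each simple pole must be a single closed saddle connection and not a chain of several parallel ones. This is precisely where $k=1$ --- as opposed to $k$ merely even --- is used, via the parity obstruction of Proposition~7.7, a rotation by $\pi$ being unavailable in the holonomy group when $k$ is odd. Granting that, everything else is bookkeeping: an Euler characteristic count and the description of the invariant-component decomposition in the residue direction $\theta$, both of which are immediate.
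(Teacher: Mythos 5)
Your proposal is correct and follows essentially the same route as the paper's proof: single closed saddle connections bounding the two pole domains (outside $\Sigma$, simple poles), nonzero opposite residues, non-degeneracy of the core, and then the decomposition of $core(X)$ in the residue direction into finite volume cylinders or minimal components, concluded via Lemma 5.12 and Corollary 5.13. You merely supply details the paper asserts tersely (the parity argument borrowed from Proposition 7.7 and an Euler characteristic count for non-degeneracy, where the paper instead notes that a degenerate core would force a genus-zero surface made of two glued half-cylinders), so no substantive difference.
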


\begin{proof}
Since poles are simple, outside the discriminant, the boundary of each domain of a pole is a unique closed saddle connection. Residues of the meromorphic differential at the poles are opposed and nonzero. $core(X)$ is not degenerate because otherwise, the surface would be the pasting of two half-cylinders on each other, that is a surface of genus zero with two poles and no conical singularities.\newline
If we consider the flow in the direction of the residues, $core(X)$ splits into a finite number of finite volume cylinders or minimal components where there are infinitely many saddle connections.\newline
\end{proof}

In very specific chambers, every surface has infinitely many saddle connections. Such a proposition holds for $1$-forms as well as for quadratic differentials.

\begin{prop}
Let $\mathcal{C}$ be a chamber of a stratum $\mathcal{H}^{k}$ of $1$-forms ($k=1$) or quadratic differentials ($k=2$) such that the boundary of one connected component is a unique closed saddle connection or a pair of parallel saddle connections. Then every surface $(X,\phi)$ in $\mathcal{C}$ has infinitely many saddle connections. Chamber $\mathcal{C}$ is of unbounded type.
\end{prop}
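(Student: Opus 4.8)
The plan is to extract from the geometric hypothesis a finite volume invariant component lying in the interior of the core, and then to conclude with Corollary 5.13. First I would record, using Proposition 4.15, that the topological type of the embedded graph $\partial\mathcal{C}(X)$ — in particular the number of boundary saddle connections of each connected component of $\mathcal{I}\mathcal{C}(X)$ — is the same for every surface of the chamber $\mathcal{C}$; hence it suffices to treat an arbitrary $(X,\phi)\in\mathcal{C}$ together with a connected component $U$ of $\mathcal{I}\mathcal{C}(X)$ whose boundary is either a single closed saddle connection or a pair of parallel saddle connections. The crucial point is that in both cases every saddle connection of $\partial U$ lies in one and the same direction $\theta$: a single saddle connection is a geodesic segment and so has a well defined direction, while two parallel saddle connections share a direction by the very definition of parallelism (Lemma 3.6). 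Moreover $U$ is a nonempty open subset of $core(X)$, hence of positive area, and it contains no pole of higher order since $U\subset core(X)$ (Lemma 4.5).

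Next I would cut $X$ along every saddle connection in direction $\theta$. By Proposition 5.5 this decomposes $X$ into finitely many invariant components; since $\partial U$ consists of leaves of the direction-$\theta$ foliation, no leaf issued from a point of $U$ can leave $\overline{U}$, so $\overline{U}$ is exactly the union of some of these invariant components. None of them contains a pole of higher order, so each one is either a finite volume cylinder or a finite volume minimal component, and there is at least one of them because $U$ has positive area. If it is a minimal component, Lemma 5.12 provides a finite volume cylinder inside it. In all cases $X$ contains a finite volume cylinder, so $|SC|=+\infty$ by Corollary 5.13. As this holds for every surface of $\mathcal{C}$, the chamber $\mathcal{C}$ is of unbounded type. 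For $k=2$ the same reasoning applies after lifting to the canonical double cover $(\widetilde{X},\omega)$, a translation surface with poles, as explained at the beginning of Section 5: a connected component of the preimage of $U$ has its boundary in a single direction, which produces a finite volume cylinder upstairs, and this projects to a finite volume cylinder downstairs.

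The step I expect to require the most care is the claim that $\overline{U}$ is genuinely a union of invariant components of $X$, i.e. that the direction-$\theta$ foliation restricted to $U$ never exits and re-enters $U$; this is exactly what the fact that $\partial U$ consists of direction-$\theta$ saddle connections is meant to guarantee, but it should be argued rather than merely asserted. A secondary point to verify is that the hypothesis really propagates through the chamber in the case of a pair of parallel saddle connections, since Proposition 4.15 only records the topological type of $\partial\mathcal{C}(X)$ and not the collinearity of holonomy vectors; here one uses that a connected component of $\mathcal{I}\mathcal{C}(X)$ bounded by two saddle connections forming two disjoint closed curves is a flat cylinder, whose two boundary circles are automatically parallel, whereas a component bounded by a single closed curve falls under the first alternative, so that the combinatorial type already determines which alternative occurs.
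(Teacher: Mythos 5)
Your argument is essentially the paper's own: the paper's (much shorter) proof simply notes that the boundary saddle connections of the distinguished component $\mathcal{U}$ have opposite holonomy vectors, so that trajectories in that direction cannot leave $\mathcal{U}$ and are therefore periodic or minimal, and concludes via Corollary 5.13 — your cutting along all direction-$\theta$ saddle connections, invoking Proposition 5.5 and Lemma 5.12, just makes the same mechanism explicit. The only inaccurate side remark is the claim that a component of $\mathcal{I}\mathcal{C}(X)$ bounded by two disjoint closed saddle connections must be a flat cylinder (it may have genus and interior singularities); fortunately that claim is not needed, since for $k=1$ Stokes' theorem applied to the pole-free subsurface $\overline{U}$ forces the two boundary holonomy vectors to be opposite, and for $k=2$ parallelism persists in the chamber because triviality of the linear holonomy of $U$ (the criterion of Lemma 3.6) is a locally constant condition — which is in effect what the paper tacitly assumes when it asserts the hypothesis for every surface of $\mathcal{C}$.
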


\begin{proof}
We consider a surface $(X,\phi)$ in $\mathcal{C}$. The holonomy vectors of the two saddle connections of the boundary of $\mathcal{U}$ are opposite to one another. Thus, if we consider the flow in the direction of the holonomy vector of the saddle connections, trajectories of $\mathcal{U}$ are either minimal or periodic because they cannot leave $\mathcal{U}$ (trajectories are parallel to the boundary). Therefore, there are infinitely many saddle connections in $X$ (Corollary 5.13).\newline
\end{proof}

\nopagebreak
\vskip.5cm
\end{document}